\theoremstyle{plain}
\numberwithin{equation}{section}
\newtheorem{theo}{Theorem}
\newtheorem{lemma}{Lemma}
\newtheorem{cora}[lemma]{Corollary}
\newtheorem{prop}[lemma]{Proposition}
\theoremstyle{definition}
\newtheorem{definition}[lemma]{Definition}
\newtheorem{remark}{Remark}   
\newtheorem{example}{Example}
\newcommand{\stand}{\operatorname{\mathfrak{s}}}
\theoremstyle{definition}
\newcommand{\comment}[1]{}
\newcommand{\somecycle}{\cdot\beta_2(m_1\  \cdots)(m_2\ m_3\  \cdots)(m_4\ \  \cdots\ n){]}&=&}
\begin{document}	
\title{Products of conjugacy classes of the alternating group}

\author{Edith Adan-Bante, John Harris, and Helena Verrill}
\thanks{H. Verrill is partially supported by NSF grant DMS-0501318.}

\address{Department of Mathematical Science, Northern Illinois University, DeKalb, Ilinois, 60115-2895}
\email{EdithAdan@illinoisalumni.org}

\address{University of Southern Mississippi Gulf Coast, Long Beach, MS 39560} 
\email{john.m.harris@usm.edu}

\address{Department of Mathematics, Louisiana State University, Baton Rouge, LA 70803-4918}
\email{verrill@lsu.edu}

\keywords{alternating groups, symmetric groups,  products, conjugacy classes}

\subjclass[2000]{20B35}

\date{June 15, 2009}
\begin{abstract} Let $A_n$ be the alternating group on $n$ letters. For $n>5$, we  describe the elements $\alpha,\beta\in A_n$ when 
$\alpha^{A_n}\beta^{A_n}$ is the union of at most four distinct conjugacy 
classes.
\end{abstract}
\maketitle

\begin{section}{Introduction}

Let $G$ be a finite group.
Then the product of two conjugacy classes $a^G$, $b^G$ in $G$ is
the union of 
$m$ distinct conjugacy classes of $G$, for some integer $m>0$. We set
 $\eta(a^Gb^G)=m$.
 In this note, we continue our exploration of the minimum possible value of
 $\eta(\alpha^G \beta^G)$, begun in \cite{symmetric}, 
investigating the case where
$G$ is the symmetric or alternating group on $n$ letters.

Throughout this paper we denote by $\min(n)$ the smallest integer in 
the set $\{\eta(\alpha^{A_n}\beta^{A_n})\mid \alpha, \beta \in A_n\setminus\{e\}\}$.
It is known 
that for $n\geq 5$, 
the product of two conjugacy classes of $A_n$ 
is never a conjugacy class, so for $\alpha, \beta$ nontrivial,
$\eta(\alpha^{A_n}\beta^{A_n})\ge2$.  On the other hand,
for any $n>5$, we can  check that
 $\eta((1\ 2 \ 3)^{A_n} (1\ 2\ 3)^{A_n})=5$, so
$2\le \min(n)\le 5$.
The main result of this paper is the following theorem.

\begin{theo}
\label{thm:maintheorem}
 Fix an integer $n\geq 6$.
 Let $A_n$ be the alternating group on $n$ letters and $\alpha,\beta\in A_n$ be nontrivial elements. Assume that
 $\eta(\alpha^{A_n} \beta^{A_n})<5$.  Then either $\alpha$ or
 $\beta$ is a product of transpositions. Assume that $\alpha$ is a product of transpositions. Then
 one of the following holds:
 
 i)  $n$ is a multiple of 4,  $\alpha$ is the product of 
 $\frac{n}{2}$ disjoint 2-cycles and $\beta$ is 
 a 3-cycle, and $\eta(\alpha^{A_n} \beta^{A_n})=2$.
 
 ii) $n-1$ is a multiple of 4,  $\alpha$ is the product of 
 $\frac{n-1}{2}$ disjoint 2-cycles and $\beta$ is 
 a 3-cycle, and $\eta(\alpha^{A_n} \beta^{A_n})=4$.
\end{theo}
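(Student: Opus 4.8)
The plan is to deduce the theorem from the analysis of products of conjugacy classes in the symmetric group carried out in \cite{symmetric}, using the standard relationship between conjugacy classes of $S_n$ and of $A_n$. Recall that an $S_n$-conjugacy class $C$ contained in $A_n$ either is a single $A_n$-class or splits into two $A_n$-classes of equal size, the latter happening exactly when the common cycle type of the elements of $C$ has all parts odd and distinct. Fix a transposition $t \in S_n \setminus A_n$ and write $\Pi = \alpha^{A_n}\beta^{A_n}$ and $\Sigma = \alpha^{A_n}(\beta^t)^{A_n}$. A direct computation using $S_n = A_n \cup A_n t$ and the normality of $A_n$ gives $\alpha^{S_n}\beta^{S_n} = \Pi \cup \Pi^t \cup \Sigma \cup \Sigma^t$; moreover this collapses to $\alpha^{S_n}\beta^{S_n} = \Pi \cup \Pi^t$ as soon as one of $\alpha^{S_n},\beta^{S_n}$ does not split, and to $\alpha^{S_n}\beta^{S_n} = \Pi$ when neither splits. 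Since each $S_n$-class accounts for at most two $A_n$-classes, in these last two situations we get $\eta(\alpha^{S_n}\beta^{S_n}) \le 2\,\eta(\alpha^{A_n}\beta^{A_n})$ and $\eta(\alpha^{S_n}\beta^{S_n}) \le \eta(\alpha^{A_n}\beta^{A_n})$ respectively.

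First I would treat the case in which neither $\alpha^{S_n}$ nor $\beta^{S_n}$ splits. Then $\eta(\alpha^{S_n}\beta^{S_n}) \le \eta(\alpha^{A_n}\beta^{A_n}) < 5$, so the symmetric-group classification of \cite{symmetric} applies and forces one of $\alpha,\beta$ to be a product of disjoint $2$-cycles. Next, the case in which exactly one of the two $S_n$-classes splits, say that of $\beta$: then $\beta$ has cycle type with distinct odd parts, in particular $\beta$ is neither the identity, a transposition, nor a product of $2$-cycles, so its support is large. Combining $\eta(\alpha^{S_n}\beta^{S_n}) \le 2\,\eta(\alpha^{A_n}\beta^{A_n}) \le 8$ with the lower bounds for $\eta$ from \cite{symmetric} (which grow with the complexity of the cycle types involved), and checking directly the finitely many configurations of distinct odd parts with $n \ge 6$ that are not already excluded, one reaches $\eta(\alpha^{A_n}\beta^{A_n}) \ge 5$, a contradiction; so this case does not occur. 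The case in which both classes split is handled the same way, only now $\alpha^{S_n}\beta^{S_n} = \Pi \cup \Pi^t \cup \Sigma \cup \Sigma^t$ with both $\alpha$ and $\beta$ of distinct-odd-parts type, which is even more restrictive. This establishes that one of $\alpha,\beta$ is a product of transpositions; say $\alpha$ is the product of $k$ disjoint $2$-cycles, necessarily with $k$ even (so $k \ge 2$) since $\alpha \in A_n$, and with $2k \le n$.

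It remains to pin down $\beta$ and compute $\eta$. Because the class of $\alpha$ does not split, $\alpha^{S_n}\beta^{S_n} = \Pi \cup \Pi^t$, hence $\eta(\alpha^{S_n}\beta^{S_n}) \le 2\,\eta(\alpha^{A_n}\beta^{A_n}) \le 8$; feeding $\alpha$ into the $S_n$ analysis of \cite{symmetric} leaves only a short list of possibilities for $\beta$, and ruling out those other than a $3$-cycle (a transposition is not in $A_n$; a double transposition, a $5$-cycle, and so on make the product too large — this has to be checked against the precise statement of \cite{symmetric}) leaves $\beta$ a $3$-cycle. For $\beta$ a $3$-cycle one then computes explicitly the set of cycle types occurring in $\alpha^{S_n}\beta^{S_n}$, records which of the resulting $S_n$-classes split in $A_n$, and decides, for the splitting ones, whether the $A_n$-class lying in $\Pi = \alpha^{A_n}\beta^{A_n}$ or its $t$-conjugate in $\Pi^t$ is the one that is meant. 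The outcome is: if $2k < n-1$ one can realize at least five distinct $A_n$-classes in $\Pi$; if $2k = n-1$ (so $n \equiv 1 \pmod 4$, since $k$ is even) exactly four occur, which is case (ii); and if $2k = n$ (so $n \equiv 0 \pmod 4$) exactly two occur, which is case (i). The two equalities $\eta = 4$ and $\eta = 2$ should finally be confirmed by exhibiting representatives of the asserted classes inside the product and verifying that no further class appears, or equivalently by a character-theoretic evaluation of the relevant class-multiplication coefficients.

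The step I expect to be the main obstacle is the last one: keeping track of which individual $A_n$-classes — as opposed to whole $S_n$-classes or their $t$-conjugates — actually meet $\alpha^{A_n}\beta^{A_n}$. Several cycle types occurring in $\alpha^{S_n}\beta^{S_n}$ split into two $A_n$-classes, and $\alpha^{A_n}\beta^{A_n}$ may contain only one of the two; determining which one requires a careful parity bookkeeping on the permutations used to express a given element as $\alpha^g\beta^h$ with $g,h \in A_n$, and it is precisely this bookkeeping — turning on whether $\alpha$ has a fixed point to spare — that produces the dichotomy between cases (i) and (ii).
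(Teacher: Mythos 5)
Your overall strategy --- reduce to $S_n$ through the class-splitting dichotomy, classify small products in $S_n$, then transfer back --- is the same as the paper's, but two of your reductions have genuine gaps. The more serious one is the case where \emph{both} $\alpha^{S_n}$ and $\beta^{S_n}$ split. Your bound $\eta(\alpha^{S_n}\beta^{S_n})\le 2\,\eta(\alpha^{A_n}\beta^{A_n})$ does not follow from ``each $S_n$-class accounts for at most two $A_n$-classes'': the product $\Pi\cup\Sigma\cup\Pi^t\cup\Sigma^t$ contains the $A_n$-classes of $\Sigma=\alpha^{A_n}(\beta^t)^{A_n}$, and these need not meet $\Pi$ at all, so all you get is $\eta(\alpha^{S_n}\beta^{S_n})\le\eta(\Pi)+\eta(\Sigma)$ with no control on $\eta(\Sigma)$. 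The paper explicitly remarks that it cannot prove $\eta(\alpha^{A_n}\beta^{A_n})\ge\eta(\alpha^{S_n}\beta^{S_n})$ in this case and instead kills it directly (Lemmas~\ref{wasclaim} and~\ref{bothsplitting}): both elements must then be products of distinct odd cycles with at most one fixed point, hence (for $n\ge 10$, after machine checks for $n=8,9$) each has a cycle of length at least $7$, and one exhibits five explicit $A_n$-conjugates of $\alpha$ whose products with $\beta$ have five different numbers of fixed points. ``Handled the same way'' does not work here; you need an argument of this kind. A lesser issue: when exactly one class splits, every $S_n$-class in $\Pi\cup\Pi^t$ already meets $\Pi$ (conjugate by $t$), so the sharp bound is $\eta(\alpha^{S_n}\beta^{S_n})\le\eta(\alpha^{A_n}\beta^{A_n})$ --- this is the paper's Proposition~\ref{prop:reducefromAntoSn} --- whereas your factor of $2$ would force you to classify all $S_n$-pairs with $\eta\le 8$, a much larger job than the $\le 5$ classification.

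The second gap is that you treat the $S_n$ classification itself as a citation to \cite{symmetric}. It is not available there in the form you need; it is the main technical content of the present paper, proved by an induction on $n$ via a modified count $\eta'$ that only permits conjugation by permutations fixing $n$ (Propositions~\ref{prop:inductionstep} and~\ref{prop:reducetothreecases} reduce to: one of $\alpha,\beta$ is trivial, a transposition, or a $3$-cycle), followed by explicit enumeration of the cycle types in $\alpha^{S_n}\beta^{S_n}$ when $\beta$ is a transposition or a $3$-cycle (Lemmas~\ref{lem:beta_a_trans} and~\ref{lem:beta_a_3cycle} and Corollary~\ref{cor:possible_eta_for_beta_3_cycle}). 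Your proposal leaves all of this unproved. Finally, the step you flag as the main obstacle --- deciding which of the two $A_n$-halves of a splitting $S_n$-class lies in $\Pi$ --- is not where the difficulty actually sits: in the surviving cases $\alpha$ is a product of $2$-cycles, so $\alpha^{A_n}=\alpha^{S_n}$ and $\Pi$ is a union of full $S_n$-classes; the classes occurring in cases (i) and (ii) all contain an even part or a repeated part and hence do not split, and the dichotomy $\eta=2$ versus $\eta=4$ comes simply from which configurations of the $3$-cycle against the $2$-cycles and fixed points of $\alpha$ are realizable, which is governed by whether $\alpha$ has $0$ or $1$ fixed points.
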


Since for any group $G$ and any $a, b \in G$, we have that 
$a^Gb^G=b^Ga^G$, the previous result describes all the possible $\alpha, \beta\in A_n\setminus \{e\}$ such that $\eta(\alpha^{A_n} \beta^{A_n})<5$.

\begin{cora}
For $n\ge 6$, 
$$
\min(n)=
\begin{cases}
2 \text{ if } n\equiv 0 \mod 4\\
4 \text{ if } n\equiv 1 \mod 4\\
5 \text{ otherwise}
\end{cases}
$$
\end{cora}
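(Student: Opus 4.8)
The plan is to deduce the corollary from Theorem~\ref{thm:maintheorem} by splitting on the residue of $n$ modulo $4$, using three facts already available in the paper: the general lower bound $\eta(\alpha^{A_n}\beta^{A_n})\ge 2$ for nontrivial $\alpha,\beta$ when $n\ge 5$; the computation $\eta((1\ 2\ 3)^{A_n}(1\ 2\ 3)^{A_n})=5$ for $n>5$, which gives the uniform upper bound $\min(n)\le 5$; and the classification of Theorem~\ref{thm:maintheorem} together with the exact values of $\eta$ recorded there in cases (i) and (ii). Since $n\ge 6$, each of these applies.

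For $n\equiv 0\bmod 4$ I would first observe that the configuration of case (i) genuinely occurs: take $\alpha$ to be a product of $\frac{n}{2}$ disjoint transpositions, which lies in $A_n$ precisely because $\frac{n}{2}$ is even, and $\beta$ a $3$-cycle; both are nontrivial. By part (i) of Theorem~\ref{thm:maintheorem} this pair satisfies $\eta(\alpha^{A_n}\beta^{A_n})=2$, so $\min(n)\le 2$, and with the general lower bound $\min(n)=2$.

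For $n\equiv 1\bmod 4$ I would argue as follows. If a nontrivial pair has $\eta(\alpha^{A_n}\beta^{A_n})<5$, Theorem~\ref{thm:maintheorem} forces case (i) or case (ii); but (i) requires $4\mid n$, which fails here, so case (ii) holds and $\eta(\alpha^{A_n}\beta^{A_n})=4$. Hence no nontrivial pair has $\eta<4$, giving $\min(n)\ge 4$. Conversely the configuration of case (ii) occurs: let $\alpha$ be a product of $\frac{n-1}{2}$ disjoint transpositions, an even permutation since $n-1\equiv 0\bmod 4$, and $\beta$ a $3$-cycle; by part (ii) of Theorem~\ref{thm:maintheorem} this pair has $\eta(\alpha^{A_n}\beta^{A_n})=4$, so $\min(n)\le 4$ and therefore $\min(n)=4$.

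For $n\equiv 2$ or $n\equiv 3\bmod 4$, if some nontrivial pair had $\eta(\alpha^{A_n}\beta^{A_n})<5$ then by Theorem~\ref{thm:maintheorem} it would fall into case (i) or case (ii), each of which forces $n\equiv 0$ or $n\equiv 1\bmod 4$ --- a contradiction. So every nontrivial pair has $\eta(\alpha^{A_n}\beta^{A_n})\ge 5$, whence $\min(n)\ge 5$, and combined with the $3$-cycle upper bound, $\min(n)=5$. I do not anticipate any real obstacle here: the only point that is not purely formal is the ``existence'' direction used in the first two cases --- that the transposition-times-$3$-cycle configurations in (i) and (ii) really do realize $\eta=2$ and $\eta=4$ --- but this is exactly what those clauses of Theorem~\ref{thm:maintheorem} assert (it being the constructive half of the theorem's proof). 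So the corollary is organized bookkeeping on top of the main theorem.
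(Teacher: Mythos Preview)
Your proof is correct and is exactly the intended deduction: the paper states the corollary immediately after Theorem~\ref{thm:maintheorem} without separate proof, relying on precisely the three ingredients you use (the lower bound $\eta\ge 2$, the upper bound $\min(n)\le 5$ from the $3$-cycle example, and the classification together with the explicit values $\eta=2,4$ in cases (i) and (ii)). Your closing remark about the existence direction is apt --- the paper does supply these values as part of Theorem~\ref{thm:maintheorem}, justified in its proof via Corollary~\ref{cor:possible_eta_for_beta_3_cycle} and the small-$n$ tables.
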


\begin{remark}
Values of $\min(n)$ for $n\le 12$ can be computed using a computer program
such as GAP \cite{GAP4}, and are given in the tables in the Appendix.
\end{remark}
\begin{remark}
For most pairs $\alpha, \beta\in A_n$,
$\eta(\alpha^{A_n} \beta^{A_n})$ is much larger than $\min(n)$
as shown in the tables in the Appendix.
\end{remark}

{\bf Acknowledgment.} The first author would like to thank FEMA for providing her
 with temporary housing in the  aftermath of hurricane Katrina. 

\end{section}
\begin{section}{Notation}
 
We use the standard cycle notation for permutations, and we take
our maps to be written on the right, so we multiply cycles from
left to right.  Thus for example,
$$(1\ 2\ 4)(1\ 2\ 3)(4\ 5\ 6)=(1\ 3)(2\ 5\ 6\ 4).$$
For integers $n >m$, 
any element of $A_m$ can also be considered as an element of $A_n$,
so we use this fact without comment.

The following is a very well known result.
\begin{lemma}\label{lemma1}
Let $\alpha \in S_n$. Then

a) $\alpha$ can be written as a product of disjoint cycles.

b) $\alpha^{S_n}$ is the set of all permutations of $S_n$ with the same
cycle structure as $\alpha$.
\end{lemma}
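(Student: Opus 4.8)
The plan is to treat the two parts separately: part (a) by a standard orbit argument, and part (b) via the ``conjugation as relabelling'' formula.

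For part (a), I would look at the cyclic subgroup $\langle\alpha\rangle\le S_n$ acting on $\Omega=\{1,2,\dots,n\}$. Its orbits partition $\Omega$, and on the orbit through a point $a$, namely $\{a,\,(a)\alpha,\,(a)\alpha^2,\dots\}$, the permutation $\alpha$ acts exactly as the cycle $(a\ \ (a)\alpha\ \ (a)\alpha^2\ \cdots)$, whose length is the size of that orbit. Since distinct orbits are disjoint, the resulting cycles are disjoint, hence commute, and checking the action point by point shows that their product is $\alpha$. (A cleaner packaging is induction on the number of points moved by $\alpha$: if $\alpha\neq e$, extract the cycle $C$ through some moved point, note that $\alpha C^{-1}$ fixes every point of $C$ while agreeing with $\alpha$ elsewhere, and invoke the inductive hypothesis.)

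For part (b), the key tool is the conjugation formula: in the right-hand convention of this paper, for any $h\in S_n$,
$$
h^{-1}(a_1\ a_2\ \cdots\ a_k)h=((a_1)h\ (a_2)h\ \cdots\ (a_k)h),
$$
because $h^{-1}\sigma h$ carries $(x)h$ to $((x)\sigma)h$ for every $x$. First I would apply this to each factor of a disjoint-cycle decomposition of $\alpha$ (supplied by part (a)) to conclude that any conjugate $h^{-1}\alpha h$ has the same multiset of cycle lengths as $\alpha$; this gives the inclusion of $\alpha^{S_n}$ in the set of permutations with the same cycle structure as $\alpha$. For the reverse inclusion, given $\alpha$ and $\beta$ with equal cycle structure, I would write $\alpha=C_1\cdots C_r$ and $\beta=D_1\cdots D_r$ as products of disjoint cycles, including all fixed points as $1$-cycles so that the cycles in each product partition $\Omega$, reindexed so that $C_i$ and $D_i$ have equal length for each $i$. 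Defining $h$ to send the $j$-th entry of $C_i$ to the $j$-th entry of $D_i$ yields a well-defined element of $S_n$ (the $C_i$ partition $\Omega$, as do the $D_i$), and the conjugation formula gives $h^{-1}C_ih=D_i$ for all $i$, hence $h^{-1}\alpha h=\beta$.

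There is no genuinely hard step here; the statement is classical. The only points I would take care with are getting the direction of the conjugation formula right under the ``maps on the right'' convention, and remembering to include fixed points as $1$-cycles so that the relabelling permutation $h$ is defined on all of $\Omega$ and is a bijection.
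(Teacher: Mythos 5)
Your proof is correct and complete: the orbit argument for part (a), the conjugation formula $h^{-1}(a_1\ \cdots\ a_k)h=((a_1)h\ \cdots\ (a_k)h)$ (which you have stated with the correct orientation for the paper's right-action convention), and the explicit construction of the relabelling permutation $h$ for the reverse inclusion in part (b) together constitute the standard textbook argument. The paper itself offers no proof of this lemma, simply labelling it ``a very well known result,'' so there is nothing to compare against; your write-up would serve as a perfectly adequate proof if one were required.
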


\begin{definition}
\label{def:standard}
The cycle type of a element of $S_n$ is an \underline{unordered}
multiset
of integers, forming a partition of $n$.
We use square bracket notation, for example,
For $\alpha\in S_n$, the
cycle type of $\alpha$ can be written as
\begin{equation}
\label{eqn:standardcycletype}
[\alpha]=[
\underbrace{1,\cdots,1}_{n_0}, 
\underbrace{a_1,\cdots,a_1}_{n_1}, 
\underbrace{a_2,\cdots,a_2}_{n_2}, \ldots, 
\underbrace{a_r,\cdots,a_r}_{n_r}],
\end{equation}
where $\alpha$ consistes of disjoint cycles of lengths $a_1,\dots,a_r$,
$$
\begin{array}{l}
r\ge 0 ,\\
n_i\ge 1\text{  for  }1\le i\le r, \text{ and } n_0\ge 0\\
1<a_1<a_2<\cdots< a_{r-1}<a_r,\\
n_0+\sum_{i=1}^{r}n_ia_i=n.\\
\end{array}
$$
i.e., here the cycle lengths are written in increasing order of size,
and form a partition of $n$.
\end{definition}

\begin{example}
For the following element of $A_{12}$ we have
\begin{eqnarray*}
{[}(8\ 9)(5\ 6\ 7\ 1)(11)(12){]}&=&{[}1,1,1,1,1,1,2,4{]}
\end{eqnarray*}
\end{example}

Lemma \ref{lemma1}
says that
 two elements $\gamma$, $\delta$ in $S_n$ are conjugate
 if and only if they are of the same type, that is if and only if
 $[\gamma]=[\delta]$.
 
\end{section}
\section{Proofs}

\subsection{Reducing from $A_n$ to $S_n$}

Although our main result is about conjugacy classes in $A_n$, by
means of the following results, we will reduce most of the work to
determining pairs $\alpha,\beta\in S_n$
for which $\eta(\alpha^{S_n}\beta^{S_n})<5$.
Note that for $\alpha\in A_n$, if $\alpha^{A_n}\not=\alpha^{S_n}$,
then $\alpha^{S_n}$ is the union of two $A_n$ conjugacy classes.

We will make use of the following two well known lemmas;
the proofs are included for completeness.

\begin{lemma}\label{oneodd} 
Let $\alpha\in A_n$ and $\beta\in S_n\setminus A_n$. Assume that
$\alpha^\beta=\beta^{-1}\alpha\beta=\alpha$. Then all the permutations of type 
$[\alpha]$ are $A_n$-conjugate to $\alpha$,
i.e., 
 $\alpha^{A_n}=\alpha^{S_n}$.
Conversely, suppose that
 $\alpha^{A_n}=\alpha^{S_n}$, then
$\alpha=\alpha^{\beta}$ for some $\beta\in S_n\setminus A_n$.
\end{lemma}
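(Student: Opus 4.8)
The plan is to argue via orbit--stabilizer, comparing $|\alpha^{A_n}|$ with $|\alpha^{S_n}|$ through their centralizers. Write $C=C_{S_n}(\alpha)$ and observe that $C_{A_n}(\alpha)=C\cap A_n$ is a subgroup of $C$ of index $1$ or $2$ (it is the kernel of the restriction of the sign homomorphism to $C$). Since $A_n\le S_n$ we always have $\alpha^{A_n}\subseteq\alpha^{S_n}$, so equality of the two classes is equivalent to equality of their sizes. By orbit--stabilizer together with $|S_n|=2|A_n|$, this reads $|A_n|/|C\cap A_n|=2|A_n|/|C|$, i.e.\ $[C:C\cap A_n]=2$, i.e.\ $C\not\subseteq A_n$. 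Thus the whole lemma reduces to the single equivalence
\[
\alpha^{A_n}=\alpha^{S_n}\iff C_{S_n}(\alpha)\not\subseteq A_n,
\]
together with Lemma~\ref{lemma1}, which identifies $\alpha^{S_n}$ with the set of all permutations of type $[\alpha]$.

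For the forward direction, the hypothesis says $\beta\in C$ and $\beta\notin A_n$, so $C\not\subseteq A_n$ and the equivalence gives $\alpha^{A_n}=\alpha^{S_n}$ immediately. For the converse, assume $\alpha^{A_n}=\alpha^{S_n}$ and choose any odd permutation $\sigma\in S_n\setminus A_n$. Then $\alpha^{\sigma}$ has the same cycle type as $\alpha$, hence lies in $\alpha^{S_n}=\alpha^{A_n}$, so $\alpha^{\sigma}=\alpha^{\tau}$ for some $\tau\in A_n$. With maps written on the right this yields $\alpha^{\sigma\tau^{-1}}=\alpha$, and $\beta:=\sigma\tau^{-1}$ is odd (a product of an odd and an even permutation), so $\beta\in S_n\setminus A_n$ centralizes $\alpha$, as required.

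There is essentially no obstacle here; the only point to keep straight is the index-$2$ bookkeeping $C_{A_n}(\alpha)=C_{S_n}(\alpha)\cap A_n$ and the resulting dichotomy: either $C_{S_n}(\alpha)\subseteq A_n$, in which case $\alpha^{S_n}$ splits into two $A_n$-classes of equal size, or $C_{S_n}(\alpha)\not\subseteq A_n$, in which case it does not split. One could equivalently phrase everything in terms of the conjugation action of $A_n$ on $\alpha^{S_n}$ having either a single orbit or two orbits of equal size, but the centralizer-index formulation above is the most economical and is what I would write down.
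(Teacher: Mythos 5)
Your proof is correct. The converse direction is essentially identical to the paper's: pick an odd $\sigma$, use $\alpha^{\sigma}\in\alpha^{S_n}=\alpha^{A_n}$ to get $\alpha^{\sigma}=\alpha^{\tau}$ with $\tau$ even, and take $\beta=\sigma\tau^{-1}$. The forward direction, however, goes by a genuinely different route. The paper argues constructively and element by element: given any $\gamma$ with $[\gamma]=[\alpha]$, it takes $\delta$ with $\alpha^{\delta}=\gamma$ and, if $\delta$ is odd, replaces it by the even conjugator $\beta\delta$, which still carries $\alpha$ to $\gamma$ because $\beta$ centralizes $\alpha$. You instead reduce everything to the orbit--stabilizer bookkeeping $[C_{S_n}(\alpha):C_{S_n}(\alpha)\cap A_n]\in\{1,2\}$ and compare class sizes. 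Both are standard and complete; the paper's version is more elementary (no counting, works verbatim in any index-two situation where one only wants to exhibit conjugators), while yours packages the dichotomy more cleanly and gives for free the additional fact -- used implicitly later in Lemma~\ref{splittingintotwo} and Corollary~\ref{cora:splittingintotwo} -- that when $\alpha^{S_n}\neq\alpha^{A_n}$ the class splits into exactly two $A_n$-classes of equal size. Your observation that equality of the two classes is equivalent to equality of their cardinalities (since one is always contained in the other) is the one small step worth stating explicitly, as you did.
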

\begin{proof} 
First assume that $\alpha=\alpha^\beta$ for some odd $\beta$.
Let $\gamma$ be any permutation with $[\gamma]=[\alpha]$. By
Lemma~\ref{lemma1} there exists $\delta\in S_n$ such that
$\delta^{-1}\alpha \delta=\gamma$. If $\delta\in A_n$, then 
$\gamma$ and $\alpha$ are $A_n$-conjugates. Otherwise $\beta\delta\in A_n$.
Observe that $(\beta\delta)^{-1}\alpha (\beta\delta)=\gamma$ and thus
$\alpha$ and $\gamma$ are $A_n$-conjugates.

Suppose that
 $\alpha^{A_n}=\alpha^{S_n}$.
So for some element $\gamma\in S_n\setminus A_n$, and
some $\delta\in A_n$, we have
$\alpha^\gamma=\alpha^\delta$.  So
$\alpha^{\gamma\delta^{-1}}
=\alpha$, so we can take 
$\beta={\gamma\delta^{-1}}$.
\end{proof}

\begin{cora}
\label{coratooneodd}
Let $\alpha\in A_n$.
If for some integer
$m>0$ and some permutation $\alpha_1\in A_{n-2m}$ 
the cycle type of $\alpha$ differs from that of $\alpha_1$ by
insertion of either  $\{2m\}$ or $\{m,m\}$,
then $\alpha^{A_n}=\alpha^{S_n}$.
\end{cora}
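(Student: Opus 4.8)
The plan is to reduce the corollary directly to Lemma~\ref{oneodd} by exhibiting an explicit odd permutation $\beta$ centralizing $\alpha$. Since $\alpha_1\in A_{n-2m}$ acts on some $(n-2m)$-element subset $X$ of $\{1,\dots,n\}$, the remaining $2m$ points, say $Y=\{j_1,\dots,j_{2m}\}$, are available to build $\beta$, and we may assume (after conjugating, which does not affect the hypothesis or conclusion by Lemma~\ref{lemma1}) that $\alpha$ acts on $Y$ either as a single $2m$-cycle $\sigma=(j_1\ j_2\ \cdots\ j_{2m})$ or as two disjoint $m$-cycles $\sigma_1=(j_1\ \cdots\ j_m)$, $\sigma_2=(j_{m+1}\ \cdots\ j_{2m})$, with $\alpha=\alpha_1\sigma$ or $\alpha=\alpha_1\sigma_1\sigma_2$ respectively.

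In the first case I would take $\beta=\sigma$ itself: a $2m$-cycle is an odd permutation, it commutes with $\alpha_1$ (disjoint supports) and with $\sigma$ (a cycle commutes with itself), so $\alpha^\beta=\alpha$ and $\beta\in S_n\setminus A_n$, and Lemma~\ref{oneodd} gives $\alpha^{A_n}=\alpha^{S_n}$. In the second case I would take $\beta$ to be the product of $m$ transpositions swapping the two $m$-cycles point by point in a way compatible with their cyclic structure, e.g. $\beta=(j_1\ j_{m+1})(j_2\ j_{m+2})\cdots(j_m\ j_{2m})$; one checks $\sigma_1^\beta=\sigma_2$ and $\sigma_2^\beta=\sigma_1$, so $(\sigma_1\sigma_2)^\beta=\sigma_1\sigma_2$, and again $\beta$ is disjoint from $\alpha_1$, hence $\alpha^\beta=\alpha$. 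Since $\beta$ is a product of $m$ transpositions it lies in $S_n\setminus A_n$ exactly when $m$ is odd; when $m$ is even I would instead compose it with a single transposition inside one of the $m$-cycles, say $t=(j_1\ j_2)$, noting that $\sigma_i^t$ need not equal $\sigma_i$, so a cleaner choice is to use a $2m$-cycle built from $\sigma_1,\sigma_2$ — concretely, observe that when $\alpha$ has two $m$-cycles on $Y$ its centralizer in $S_n$ restricted to $Y$ is the wreath product $C_m\wr C_2$, which always contains odd elements (for instance the element swapping the two cycles has order $2m/\gcd$ and is a $2m$-cycle when... ) — so it is cleanest to argue abstractly that $C_m\wr S_2 \not\le A_{2m}$, which is immediate since it contains the transposition-pair product above together with a rotation, and at least one generator is odd.

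The only real subtlety is the parity bookkeeping in the two-$m$-cycles case, and the cleanest way around it is not to fix a single $\beta$ but to note that the full centralizer $C_{S_n}(\alpha)$ contains $C_{S_m}\times\cdots$ glued with the order-two ``swap'' of the two $m$-blocks, and that this swap-type subgroup $C_m\wr C_2$ has elements of both parities (its image under the sign map is all of $\{\pm1\}$ because the swap of two $m$-blocks is a product of $m$ transpositions and, independently, a single $m$-cycle inside one block contributes sign $(-1)^{m-1}$, so between these two generators both parities are realized regardless of $m$). Hence $C_{S_n}(\alpha)\not\subseteq A_n$, which by the converse direction of Lemma~\ref{oneodd} (or directly by its forward direction applied to any odd centralizing element) yields $\alpha^{A_n}=\alpha^{S_n}$. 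I expect the block-swap parity argument to be the main point to get right; everything else is a disjoint-support commutation check.
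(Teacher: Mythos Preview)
Your proposal is correct and, once the false starts are stripped away, is essentially the paper's proof. In the $\{2m\}$ case both you and the paper take $\beta$ to be the $2m$-cycle itself. In the $\{m,m\}$ case the paper splits on the parity of $m$: if $m$ is even it uses one of the $m$-cycles (an even-length cycle, hence an odd permutation) as the centralizing element, and if $m$ is odd it uses exactly your block-swap $\delta=(1\ m{+}1)(2\ m{+}2)\cdots(m\ 2m)$. Your final formulation---that the centralizer contains both a single $m$-cycle (sign $(-1)^{m-1}$) and the block-swap (sign $(-1)^m$), so one of them is necessarily odd---is precisely this case split, just packaged uniformly. The detours through composing with $(j_1\ j_2)$ or building a $2m$-cycle are unnecessary; the clean version of your argument is simply: for $m$ even take $\beta=\sigma_1$, for $m$ odd take $\beta$ to be the block-swap.
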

\begin{proof}
If  $\alpha=\alpha_1\gamma$, where $\gamma$ is a cycle of order $2m$
 and $\gamma$ and $\alpha_1$ are disjoint permutations, then 
 $\alpha^{\gamma}=\alpha$. 
Since a cycle of even order is an odd permutation, the  result follows
from Lemma~\ref{oneodd}, since
$\alpha^\gamma=\alpha$.
 
 Assume now that $\alpha=\gamma_1\gamma_2\alpha_1$, where $\gamma_1$ and $\gamma_2$ are
 cycles of  order $m$ and $\alpha_1$, $\gamma_1$ and $\gamma_2$ are disjoint 
 permutations. If $m$ is even, 
the result then follows by the previous paragraph.
 We may assume then that $m$ is odd. Without loss of generality, we may 
 assume that $\gamma_1\gamma_2=(1\ 2 \ \cdots \ m)(m+1 \ m+2 \cdots \ 2m)$.
 Observe that the permutation $\delta=(1\ m+1)(2\ m+2)\cdots (i\ m+i)\cdots (m\  2m)$
 is an odd permutation
 and $(\gamma_1\gamma_2)^{\delta}=\gamma_2\gamma_1$. Thus 
 $\alpha^{\delta}=\alpha$ 
  and the result follows. 
\end{proof}

\begin{lemma}\label{splittingintotwo}
Let $\alpha\in A_n$. Set $[\alpha]=[a_1,\ldots, a_r]$.
Then $\alpha^{S_n}$ is the union of  two
distinct  conjugacy classes of
$A_n$ 
(equivalently, $\alpha^{S_n}\not=\alpha^{A_n}$)
if and only if $a_i$ is an odd integer for all $i$ and 
$a_i\neq a_j$ for any $i\neq j$. 
\end{lemma}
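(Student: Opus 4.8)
The plan is to use Lemma~\ref{oneodd} to convert the splitting statement into a question about $C_{S_n}(\alpha)$, and then read off that centralizer from the cycle type of $\alpha$. By the observation preceding Lemma~\ref{oneodd}, $\alpha^{S_n}$ is a union of two distinct $A_n$-classes exactly when $\alpha^{A_n}\neq\alpha^{S_n}$, and Lemma~\ref{oneodd} says this inequality holds if and only if no odd permutation commutes with $\alpha$, i.e.\ if and only if $C_{S_n}(\alpha)\subseteq A_n$. So the whole lemma reduces to showing that $C_{S_n}(\alpha)\subseteq A_n$ precisely when every cycle length of $\alpha$ is odd and the cycle lengths are pairwise distinct.

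For the ``only if'' part I would prove the contrapositive by producing an odd element of $C_{S_n}(\alpha)$, reusing the two tricks already used in the proof of Corollary~\ref{coratooneodd}. If $\alpha$ has a cycle of even length, that cycle is itself an odd permutation commuting with $\alpha$. If $\alpha$ has two cycles of the same length $\ell$, say $(x_1\ \cdots\ x_\ell)$ and $(y_1\ \cdots\ y_\ell)$, then $\delta=(x_1\ y_1)(x_2\ y_2)\cdots(x_\ell\ y_\ell)$ interchanges these two cycles, fixes all other cycles of $\alpha$, hence centralizes $\alpha$; and $\delta$ is a product of $\ell$ transpositions, so it is odd when $\ell$ is odd (the subcase $\ell=1$, two fixed points swapped by a single transposition, being included). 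Either way $C_{S_n}(\alpha)\not\subseteq A_n$.

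For the converse, suppose all cycle lengths are odd and distinct, write $\alpha=\gamma_1\cdots\gamma_r$ as a product of disjoint cycles, and let $\beta\in C_{S_n}(\alpha)$. Because $\beta$ commutes with $\alpha$ it carries each orbit of $\langle\alpha\rangle$ to an orbit of the same size; since the orbit sizes are pairwise distinct, $\beta$ stabilizes each orbit setwise, and on the orbit supporting $\gamma_i$ it commutes with the corresponding cycle $\gamma_i$ and so lies in $\langle\gamma_i\rangle$. Hence $C_{S_n}(\alpha)=\langle\gamma_1\rangle\times\cdots\times\langle\gamma_r\rangle$; as each $\gamma_i$ has odd length it is an even permutation, so $C_{S_n}(\alpha)\subseteq A_n$. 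The only step requiring any care is this last centralizer computation --- checking that a permutation commuting with $\alpha$ permutes the $\langle\alpha\rangle$-orbits and, under the distinctness hypothesis, must fix each one and act on it as a power of the corresponding cycle; the rest is just the sign of a cycle together with Lemma~\ref{oneodd}.
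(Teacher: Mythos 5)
Your proof is correct. The forward direction is identical to the paper's: an even-length cycle of $\alpha$ is itself an odd centralizing element, and two cycles of equal odd length $\ell$ are swapped by a product of $\ell$ transpositions, exactly the two constructions in Corollary~\ref{coratooneodd}. For the converse the paper instead exhibits the explicit pair $\alpha$ and $\alpha^{(1\ 2)}$ and asserts that ``we can check'' that the only $\gamma$ with $\alpha^\gamma=\alpha^{(1\ 2)}$ is $(1\ 2)$ --- a claim that, read literally, is false (any element of $C_{S_n}(\alpha)(1\ 2)$ conjugates $\alpha$ to the same element; what is true and what is needed is that all such $\gamma$ are odd). Your route via the full centralizer computation $C_{S_n}(\alpha)=\langle\gamma_1\rangle\times\cdots\times\langle\gamma_r\rangle\subseteq A_n$, combined with the converse half of Lemma~\ref{oneodd}, supplies precisely the justification the paper elides, at the modest cost of having to verify that a centralizing permutation preserves each $\langle\alpha\rangle$-orbit and acts there as a power of the corresponding cycle. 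So the two arguments rest on the same underlying fact, but yours is the more rigorous rendering of the converse.
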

\begin{proof}
By Lemma \ref{oneodd}, notice that if some  $a_i$ is even for some $i$,
 then all the elements
of type $[a_1,\ldots, a_r]$ are $A_n$-conjugate,
since we can just conjugate by the corresponding cycle of length $a_i$.
 If there exist $a_i$ and $a_j$ such 
that $a_i=a_j$, then $\alpha$ will be fixed by an odd permutation,
as in the second case in Corollary~\ref{coratooneodd}.

Conversely, assume that $a_i$ is an odd integer for all $i$ and 
$a_i\neq a_j$ for any $i\neq j$. By renaming if necessary, we may 
assume that $\alpha=(1\ 2\ \cdots\ a_1)\alpha_1$,
with $\alpha_1$ fixing $1, 2,\dots,a_1$.
Let
$\beta=\alpha^{(1\ 2)}=(2\ 1\ 3\cdots\ a_1)\alpha_1$.
 We can check that since 
$a_i\neq a_j$ and $a_i$ is odd for all
$i$, if $\alpha^{\gamma}=\beta$, then $\gamma=(1\ 2)$. Thus
$\alpha$ and $\beta$ are not $A_n$ conjugates but clearly $[\alpha]=[\beta]$.
\end{proof}

\begin{cora}
\label{cora:splittingintotwo}
Let $\alpha\in A_n$.
Suppose that $\alpha^{S_n}\not=\alpha^{A_n}$.
Then $\alpha$ fixes at most one 
element. If $\alpha=(1\ 2\ 3 \ \cdots\ a_1)\alpha_1$,
where $\alpha_1$ fixes $1,2,\dots,a_1$, then
$\alpha^{S_n}=\alpha^{A_n} \cup (\alpha^{(1\ 2)})^{A_n}
=\alpha^{A_n} \cup 
((2\ 1\ 3 \ \cdots\ a_1)\alpha_1)^{A_n}$. 
\end{cora}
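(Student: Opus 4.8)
The plan is to read everything off from Lemma~\ref{splittingintotwo} and its proof, together with the standard fact recorded above: whenever $\alpha^{S_n}\neq\alpha^{A_n}$, the set $\alpha^{S_n}$ is the union of \emph{exactly} two $A_n$-conjugacy classes (since $[S_n:A_n]=2$, an $S_n$-class either stays a single $A_n$-class or breaks into two).

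\textbf{Step 1: the fixed-point claim.} From $\alpha^{S_n}\neq\alpha^{A_n}$ we get $\alpha\neq e$, and by Lemma~\ref{splittingintotwo} every part of the cycle type $[\alpha]$ — the $1$-cycles included — is odd, and the parts are pairwise distinct. A fixed point of $\alpha$ corresponds to a part equal to $1$; pairwise distinctness permits at most one such part, so $\alpha$ fixes at most one letter. Since $\alpha\neq e$ it has a cycle of length $>1$, necessarily of odd length $\geq 3$; after renaming the letters we may assume this cycle is $(1\ 2\ 3\ \cdots\ a_1)$, so $\alpha=(1\ 2\ 3\ \cdots\ a_1)\alpha_1$ with $\alpha_1$ fixing $1,2,\dots,a_1$, which is the form assumed in the statement.

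\textbf{Step 2: locating the second class.} Put $\beta=\alpha^{(1\ 2)}$. Since $(1\ 2)$ is a transposition, $[\beta]=[\alpha]$, so $\beta\in\alpha^{S_n}$; and the verification inside the proof of Lemma~\ref{splittingintotwo} shows that the only $\gamma\in S_n$ with $\alpha^{\gamma}=\beta$ is $\gamma=(1\ 2)$, which is odd, so $\beta\notin\alpha^{A_n}$. Hence $\alpha^{A_n}$ and $\beta^{A_n}$ are two distinct $A_n$-classes contained in $\alpha^{S_n}$; as $\alpha^{S_n}$ consists of exactly two such classes, $\alpha^{S_n}=\alpha^{A_n}\cup\beta^{A_n}$.

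\textbf{Step 3: computing $\beta$.} Because $\alpha_1$ fixes $1$ and $2$ it commutes with $(1\ 2)$, and conjugation by $(1\ 2)$ merely interchanges the symbols $1$ and $2$ appearing in a cycle, so
$$\beta=(1\ 2)(1\ 2\ 3\ \cdots\ a_1)\alpha_1(1\ 2)=\bigl((1\ 2)(1\ 2\ 3\ \cdots\ a_1)(1\ 2)\bigr)\alpha_1=(2\ 1\ 3\ \cdots\ a_1)\alpha_1,$$
which yields the displayed identity. There is essentially no obstacle here: the mathematical content is already contained in Lemma~\ref{splittingintotwo}, and the only points that deserve an explicit sentence are that the partition $[\alpha]$ there includes the $1$-cycles (so that distinctness of the parts forces at most one fixed point) and that an $S_n$-class which is not a single $A_n$-class splits into precisely two.
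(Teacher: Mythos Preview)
Your proof is correct and follows exactly the approach the paper intends: the corollary is stated without proof in the paper precisely because every step is read off directly from Lemma~\ref{splittingintotwo} and its proof, together with the index-two splitting fact, and you have spelled out these details accurately. The only cosmetic point is that ``the only $\gamma$ with $\alpha^{\gamma}=\beta$ is $(1\ 2)$'' (which you quote from the paper's proof) should really be ``every such $\gamma$ is odd,'' since one can multiply by elements of the centralizer of $\alpha$; but the centralizer lies in $A_n$ here, so the conclusion $\beta\notin\alpha^{A_n}$ is unaffected.
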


\begin{prop}
\label{prop:reducefromAntoSn}
Let $\alpha,\beta\in A_n$,
and suppose that
either 
$\alpha^{A_n}=\alpha^{S_n}$ or $\beta^{A_n}=\beta^{S_n}$.
  Then 
$\eta(\alpha^{A_n}\beta^{A_n})\geq 
\eta(\alpha^{S_n}\beta^{S_n})$.
\end{prop}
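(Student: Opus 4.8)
The plan is to compare the partition of $\alpha^{S_n}\beta^{S_n}$ into $S_n$-classes with its partition into $A_n$-classes, using the hypothesis to fix one of the factors, say $\beta$, with $\beta^{A_n}=\beta^{S_n}$. The key observation is that $\alpha^{S_n}\beta^{S_n}=\bigcup_{x\in S_n}(\alpha^x\beta^x)^{?}$ can be written in terms of $A_n$-classes: since $\beta^{A_n}=\beta^{S_n}$, we have $\alpha^{S_n}\beta^{S_n}=\alpha^{S_n}\beta^{A_n}$, and writing $\alpha^{S_n}$ as a union of at most two $A_n$-classes (either $\alpha^{S_n}=\alpha^{A_n}$, or $\alpha^{S_n}=\alpha^{A_n}\cup(\alpha')^{A_n}$ for a suitable $\alpha'$ by Corollary~\ref{cora:splittingintotwo}), we get that $\alpha^{S_n}\beta^{S_n}$ is a union of either the single set $\alpha^{A_n}\beta^{A_n}$ or the two sets $\alpha^{A_n}\beta^{A_n}$ and $(\alpha')^{A_n}\beta^{A_n}$.

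First I would handle the easy case: if $\alpha^{A_n}=\alpha^{S_n}$, then $\alpha^{S_n}\beta^{S_n}=\alpha^{A_n}\beta^{A_n}$ as sets. Now $\alpha^{A_n}\beta^{A_n}$ is a union of $A_n$-classes, say $m$ of them, so $\eta(\alpha^{A_n}\beta^{A_n})=m$; grouping these $A_n$-classes by $S_n$-class can only decrease or preserve their number, so $\eta(\alpha^{S_n}\beta^{S_n})\le m=\eta(\alpha^{A_n}\beta^{A_n})$, which is exactly the claim. The point is simply that every $S_n$-class appearing in the product is a union of one or two $A_n$-classes, hence the count of $S_n$-classes is at most the count of $A_n$-classes.

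Second, the case $\beta^{A_n}=\beta^{S_n}$ but possibly $\alpha^{A_n}\ne\alpha^{S_n}$. Here I use that $\alpha^{S_n}\beta^{A_n}=\alpha^{A_n}\beta^{A_n}\cup(\alpha')^{A_n}\beta^{A_n}$, a union of $A_n$-classes among which those in $\alpha^{A_n}\beta^{A_n}$ already number $\eta(\alpha^{A_n}\beta^{A_n})$. So $\alpha^{S_n}\beta^{S_n}$ is a union of at least $\eta(\alpha^{A_n}\beta^{A_n})$ distinct $A_n$-classes, and again collapsing to $S_n$-classes does not increase the count, giving $\eta(\alpha^{S_n}\beta^{S_n})\le\eta(\alpha^{A_n}\beta^{A_n})$. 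The only subtlety — and the step I expect to require the most care — is making sure that the $A_n$-classes inside $\alpha^{A_n}\beta^{A_n}$ really do "survive" as distinct contributions: one must check that two distinct $A_n$-classes contained in the same $S_n$-class cannot both be swallowed into fewer $S_n$-classes than they came from, i.e. that the map {$A_n$-classes in the product} $\to$ {$S_n$-classes in the product} is surjective with each $S_n$-class the union of its $A_n$-class preimages. This is immediate from the general fact that any $S_n$-class is either a single $A_n$-class or a disjoint union of two, so the inequality $\eta(\alpha^{S_n}\beta^{S_n})\le\eta(\alpha^{A_n}\beta^{A_n})$ follows without any delicate case analysis.
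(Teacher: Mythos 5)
Your reduction (by $a^Gb^G=b^Ga^G$ assume $\beta^{A_n}=\beta^{S_n}$, then split on whether $\alpha^{S_n}$ is one or two $A_n$-classes) and your final counting step are fine, but the step you yourself flag as ``requiring the most care'' is exactly where the argument has a gap, and the reason you give for it is not sufficient. What you need is that the containment map from the $A_n$-classes in $\alpha^{A_n}\beta^{A_n}$ to the $S_n$-classes in $\alpha^{S_n}\beta^{S_n}$ is \emph{surjective}, i.e.\ that every cycle type occurring in the big set $\alpha^{S_n}\beta^{S_n}=\alpha^{A_n}\beta^{A_n}\cup(\alpha')^{A_n}\beta^{A_n}$ already occurs in the smaller set $\alpha^{A_n}\beta^{A_n}$. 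The ``general fact that any $S_n$-class is a single $A_n$-class or a disjoint union of two'' does not give this: it only says that an $S_n$-class $D\subseteq\alpha^{S_n}\beta^{S_n}$ decomposes as $D_1\cup D_2$, and a priori both $D_1$ and $D_2$ could sit inside $(\alpha')^{A_n}\beta^{A_n}$ with neither meeting $\alpha^{A_n}\beta^{A_n}$; in that case $D$ would be an $S_n$-class of the product not accounted for by any $A_n$-class of $\alpha^{A_n}\beta^{A_n}$, and the count $\eta(\alpha^{S_n}\beta^{S_n})\le\eta(\alpha^{A_n}\beta^{A_n})$ would fail. Notice that your justification of this step never uses the hypothesis; if it were valid it would prove the inequality with no assumption on $\alpha,\beta$ at all, which the authors explicitly state (in the Remark following the Proposition) they were unable to do.

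The missing ingredient is precisely what the paper's proof supplies. Since $\beta^{A_n}=\beta^{S_n}$, Lemma~\ref{oneodd} provides an odd $\gamma$ with $\beta^\gamma=\beta$. Then any element $\alpha^x\beta^y$ of $\alpha^{S_n}\beta^{S_n}$ is $S_n$-conjugate to $\alpha^{xy^{-1}}\beta$, and if $xy^{-1}$ is odd this in turn equals $(\alpha^{xy^{-1}\gamma^{-1}}\beta)^{\gamma}$ with $xy^{-1}\gamma^{-1}$ even; either way its cycle type is realized by an element of $\alpha^{A_n}\beta^{A_n}$. (Equivalently: $(\alpha')^{A_n}\beta^{A_n}=\bigl(\alpha^{A_n}\beta^{A_n}\bigr)^{\sigma}$ for an odd $\sigma$, so the two pieces of your decomposition carry exactly the same cycle types.) Once this line is inserted, surjectivity holds, your counting argument closes, and the proof is essentially the paper's own argument in a slightly different dress.
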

\begin{proof}
Suppose that $\alpha^{A_n}=\alpha^{S_n}$.
Then by Lemma~\ref{oneodd} there is some odd permutation 
$\gamma$ with $\alpha^\gamma=\alpha$.
For any $a,b\in S_n$, one of the pairs
$(a,b)$, 
$(\gamma a,b)$, $(\gamma a\gamma^{-1},b\gamma^{-1} )$, 
$(a\gamma^{-1},b\gamma^{-1} )$, is in $A_n\times A_n$.
Since $\alpha^\gamma=\alpha$, we have
$[\alpha^a\beta^b]=[\alpha^{\gamma a}\beta^b]=
[(\alpha^{\gamma a}\beta^b)^{\gamma^{-1}}]=
[\alpha^{\gamma a\gamma^{-1}}\beta^{b\gamma^{-1}}]$
and 
$
[\alpha^a\beta^b]=
[(\alpha^a\beta^b)^{\gamma^{-1}}]
[\alpha^{a\gamma^{-1}}\beta^{b\gamma^{-1}}]
$, so
 $\alpha^{A_n}\beta^{A_n}$ contains elements of all possible cycle types
of elements of $\alpha^{S_n}\beta^{S_n}$, so the result follows.
Similarly if $\beta^{A_n}=\beta^{S_n}$.
\end{proof}

\begin{remark}
Experimentally, it seems that
$\eta(\alpha^{A_n}\beta^{A_n})\geq 
\eta(\alpha^{S_n}\beta^{S_n})$
even when 
$\alpha^{A_n}\not=\alpha^{S_n}$ and  $\beta^{A_n}\not=\beta^{S_n}$,
but we have not been able to prove this.
\end{remark}

In Proposition~\ref{prop:reducefromAntoSn}, we have to assume either
$\alpha^{A_n}=\alpha^{S_n}$ or $\beta^{A_n}=\beta^{S_n}$.
We now deal with the case where 
$\alpha^{A_n}\not=\alpha^{S_n}$ and  $\beta^{A_n}\not=\beta^{S_n}$.

\begin{lemma}\label{order} Let $G$ be a finite group, $a$ and $b$ in $G$.
Then $a^G b^G =b^G a^G$.
\end{lemma}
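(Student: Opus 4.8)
The statement in question, Lemma~\ref{order}, asserts the elementary fact that for any finite group $G$ and any $a,b\in G$, the products of conjugacy classes satisfy $a^G b^G = b^G a^G$.

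\medskip

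The plan is to prove this by a direct double-inclusion argument, exploiting the fact that a product of full conjugacy classes is automatically closed under conjugation. First I would take an arbitrary element of $a^G b^G$; by definition it has the form $(g^{-1}ag)(h^{-1}bh)$ for some $g,h\in G$. The key observation is that $a^G b^G$ is a union of conjugacy classes: for any $x\in G$, conjugating $(g^{-1}ag)(h^{-1}bh)$ by $x$ yields $\bigl((gx)^{-1}a(gx)\bigr)\bigl((hx)^{-1}b(hx)\bigr)$, which is again in $a^G b^G$. Hence $a^G b^G = \bigl(a^G b^G\bigr)^G$, and likewise $b^G a^G = \bigl(b^G a^G\bigr)^G$.

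\medskip

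Using this, to show $a^G b^G \subseteq b^G a^G$ it suffices to show that each element $c$ of $a^G b^G$ is $G$-conjugate to some element of $b^G a^G$. Given $c = a' b'$ with $a'\in a^G$, $b'\in b^G$, I conjugate by $a'$: we get $a'^{-1}(a'b')a' = b' a'$, and $b'a'\in b^G a^G$ since $b'\in b^G$ and $a'\in a^G$. Thus $c$ is conjugate to an element of $b^G a^G$, and since $b^G a^G$ is closed under conjugation, $c\in b^G a^G$. This proves $a^G b^G\subseteq b^G a^G$, and the reverse inclusion follows by symmetry (swapping the roles of $a$ and $b$). Therefore $a^G b^G = b^G a^G$.

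\medskip

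There is essentially no obstacle here; the only thing to be careful about is correctly tracking that products of conjugacy classes are unions of conjugacy classes, which is what makes the ``conjugate into the other set'' trick work. One could alternatively phrase the whole argument at the level of the identity $a' b' = b'(b'^{-1} a' b')$ inside a single step, but the conjugation-closure formulation makes transparent why elementwise commutation up to conjugacy is enough. I would write it in the compact two-inclusion form above.
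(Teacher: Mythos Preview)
Your proof is correct. The paper actually states this lemma without proof, treating it as a well-known elementary fact; your conjugation-closure argument (noting that $a'b'$ is conjugate via $a'$ to $b'a'$, and that each product of full conjugacy classes is conjugation-invariant) is a standard and entirely valid way to justify it.
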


\begin{lemma}
\label{wasclaim}
Let $\alpha,\beta\in A_n$ with $n\ge 7$.
Assume that
\begin{eqnarray*}
\alpha&=&(1 \ 2 \ 3\ 4 \ 5\ 6 \cdots \ a_1)\gamma, \\
\beta&=&(1 \ 2 \ 3\ 4 \ 5\ 6 \cdots \ b_1)\delta,
\end{eqnarray*}
with $a_1\ge 7$ and $\gamma$ fixing $1,2,\dots,a_1$, and
with $b_1\ge 7$ and $\delta$ fixing $1,2,\dots,b_1$.
Then
$\eta(\alpha^{A_n}\beta^{A_n})\ge 5$
and 
$\eta(\alpha^{A_n}\beta^{(2\ 4)(3\ 6\ 5)A_n})\ge 5$
\end{lemma}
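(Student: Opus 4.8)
The plan is to exhibit, inside each of the two products $\alpha^{A_n}\beta^{A_n}$ and $\alpha^{A_n}\bigl((2\ 4)(3\ 6\ 5)\beta\bigr)^{A_n}$, five conjugates $\alpha^{g}\beta'^{h}$ whose cycle types are pairwise distinct, where $\beta'$ is $\beta$ or $(2\ 4)(3\ 6\ 5)\beta$ as appropriate. Since $a_1,b_1\ge 7$, both $\alpha$ and $\beta$ contain a long cycle on the symbols $1,\dots,7$, and we have plenty of room to perturb the overlap of the two cycles without touching $\gamma$ or $\delta$. The idea is to fix $\alpha$ and replace $\beta$ by $\beta^{\sigma}$ for five carefully chosen $\sigma\in S_n$ that permute only the symbols $1,\dots,7$ (and possibly two further fixed points, to keep everything in $A_n$ when needed); because $\sigma$ acts trivially on the support of $\gamma$ and $\delta$, the product $\alpha\cdot\beta^{\sigma}$ changes only in a controlled, local way, and one computes its cycle type by tracking how the two length-$a_1$ and length-$b_1$ cycles splice together over the shared initial segment. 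The five choices of $\sigma$ should be arranged so that the splicing produces, say, one long cycle of length $a_1+b_1-k$ for five different values of $k$, or more robustly, five products with distinct multisets of cycle lengths on $\{1,\dots,7,\dots\}$; since $a_1,b_1\ge 7$ there is enough length to realize at least five distinct outcomes.

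Concretely, I would first reduce to computing products of the form $(1\ 2\ \cdots\ a_1)\cdot\tau^{-1}(1\ 2\ \cdots\ b_1)\tau$ for $\tau$ supported on $\{1,\dots,7\}$: because $\gamma$ and $\delta$ are disjoint from these cycles and from each other, $[\alpha\beta^{\tau}] = [\gamma]\sqcup[\delta]\sqcup[\text{splice}]$ where the splice term depends only on $a_1$, $b_1$, and $\tau$. Then I would tabulate the splice cycle type for a handful of explicit $\tau$'s — the identity, a transposition like $(2\ 3)$, a 3-cycle, $(1\ 4)(2\ 5)$, $(2\ 4)(3\ 6\ 5)$ itself, etc. — and check that among them at least five give pairwise distinct cycle types. (For the second assertion one does the same computation but with $(1\ 2\ \cdots\ b_1)$ replaced throughout by $(2\ 4)(3\ 6\ 5)(1\ 2\ \cdots\ b_1)$, i.e. one just shifts the list of $\tau$'s by a fixed prefactor, so it is the same kind of computation.) Finally, for each such $\tau$ one must ensure the conjugating element actually lies in $A_n$: if $\tau$ is even we use $\beta^{\tau}=\beta^{\tau}$ directly; if $\tau$ is odd we multiply $\tau$ by a transposition $(n-1\ n)$ of two symbols fixed by both $\alpha$ and $\beta$, which changes nothing about the product's cycle type but makes the conjugator even — here $n\ge 7$ guarantees such spare fixed points exist since $a_1,b_1$ can be at most $n$ but the hypotheses still leave room, and in the boundary case one argues directly.

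The main obstacle I expect is the bookkeeping in the case analysis: the splice cycle type depends on the relative parities and sizes of $a_1$ and $b_1$ and on exactly how $\tau$ interleaves the two initial segments, so one has to either find a uniform family of five $\tau$'s that provably works for all $a_1,b_1\ge 7$, or split into a small number of cases (e.g. $a_1=b_1$ versus $a_1\ne b_1$, and small values $a_1,b_1\in\{7,8,9\}$ checked by hand or by GAP as in the Appendix). A secondary subtlety is that when $\alpha^{A_n}\ne\alpha^{S_n}$ — which by Lemma~\ref{splittingintotwo} happens precisely when all cycle lengths of $\alpha$ are odd and distinct — conjugation by an odd permutation genuinely moves between the two $A_n$-classes, so one cannot freely replace an odd conjugator by adjusting a transposition elsewhere; in that situation I would instead pick the five $\tau$'s to be even from the outset, which is possible because the even permutations of $\{1,\dots,7\}$ already form a group of order $2520$, far more than enough to produce five distinct splice types. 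Once the list of five $\tau$'s is pinned down, verifying the distinctness of the resulting cycle types is a finite, mechanical check.
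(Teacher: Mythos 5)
Your overall strategy---produce five elements of $\alpha^{A_n}\beta^{A_n}$ by conjugating one factor by even permutations supported on $\{1,\dots,7\}$, and separate them by a conjugacy invariant---is the paper's strategy. But as written there are genuine gaps. The most serious one is that your computational framework rests on the claim $[\alpha\beta^{\tau}]=[\gamma]\sqcup[\delta]\sqcup[\text{splice}]$, justified by ``$\gamma$ and $\delta$ are disjoint from these cycles and from each other.'' That is false: $\gamma$ is supported on $\{a_1+1,\dots,n\}$ and $\delta$ on $\{b_1+1,\dots,n\}$, so their supports overlap on the common tail (and if $a_1\neq b_1$, the one with smaller index also meets the other's long cycle). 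Hence the cycle type of $\alpha\beta^{\tau}$ cannot be computed as a disjoint union of local pieces, and the ``splice'' tabulation you propose does not determine $[\alpha\beta^{\tau}]$. The paper sidesteps exactly this difficulty by using a coarser invariant: the \emph{number of fixed points}. Since $\alpha$ and a conjugate $\alpha_i=\alpha^{\sigma}$ with $\sigma$ supported on $\{1,\dots,7\}$ agree off $\{1,\dots,7\}$, the products $\alpha\beta$ and $\alpha_i\beta$ agree off $\{1,\dots,7\}$, so their fixed-point sets differ only inside $\{1,\dots,7\}$; that difference is computable by hand with no knowledge of $\gamma$, $\delta$, or the exact values of $a_1,b_1$ beyond $a_1,b_1\ge 7$. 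The second gap is that the entire content of the lemma---the explicit five conjugators and the verification that the five products are pairwise non-conjugate---is precisely what you defer as ``a finite, mechanical check'': the paper supplies $\alpha_2,\dots,\alpha_5$ (and a sixth conjugate $\alpha_6$ for the twisted product with $\beta^{(2\ 4)(3\ 6\ 5)}$) and shows the fixed-point sets are $X$, $X\cup\{3\}$, $X\cup\{3,4\}$, $X\cup\{2,3,4\}$, $X\cup\{2,3,4,5\}$, which have distinct cardinalities.

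A smaller point: your first parity fix---replacing an odd $\tau$ by $\tau\cdot(n-1\ \,n)$ for two symbols fixed by both $\alpha$ and $\beta$---is not available here. In the intended application (Lemma~\ref{bothsplitting}) each of $\alpha,\beta$ fixes at most one point, so such spare symbols typically do not exist, and nothing in the hypotheses of the present lemma guarantees them either (e.g.\ $a_1$ could equal $n$). Your fallback of choosing the conjugators to be even from the outset is the correct move and is what the paper does.
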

\begin{proof}
Let
\begin{eqnarray*}
\alpha_2&=&(1 \ 3 \ 2\ 7 \ 5\ 6\ 4\ \cdots \ a_1)\gamma,\\
\alpha_3&=&(1 \ 3\ 2 \ 5 \ 4\ 6 \ 7 \cdots \ a_1)\gamma, \\
\alpha_4&=&(1 \ 5 \ 4\ 3 \ 2\ 6 \ 7 \cdots \ a_1)\gamma,\\
\alpha_5&=&(1 \  6 \ 5\ 4\ 3\ 2\ 7\cdots\  a_1)\gamma.
\end{eqnarray*}
Then $\alpha$, $\alpha_1$, $\alpha_2$, $\alpha_3$ and $\alpha_4$ are $A_n$-conjugates.
We will now show that
 $\alpha\beta$, $\alpha_2\beta$,$\alpha_3\beta$,$\alpha_4\beta$,
$\alpha_5\beta$
fix different number of points, and thus are not conjugate to each other.

Let $X$ be the set of fixed points of $\alpha\beta$.
 Observe that $\{1,2,3,4,5\}\cap X=\emptyset$. 
 We can check that the set of fixed points of 
 $\alpha_2\beta$, $\alpha_3\beta$, $\alpha_4\beta$ and 
$\alpha_5\beta$
 is $X\cup\{3\}$, $X\cup\{3,4\}$ 
 $X\cup\{2,3,4\}$, and $X\cup \{2,3,4,5\}$
  respectively. 

Our proof is similar when $\beta$ is replaced by 
$$\beta':=\beta^{(2\ 4)(3\ 6\ 5)}=
(1\ 4\ 6\ 2\ 3\ 5\ 7\ \dots\ b_1)\delta.$$
Define a further conjugate of $\alpha$ in $A_n$ by
$$\alpha_6=\alpha^{(2\ 7\ 4\ 3\ 5)}=(1\ 7\ 5\ 3\ 2\ 6\ 4\ \cdots\ a_1)\gamma.$$
Now if $Y$ is the set of fixed points of $\alpha\beta'$,
then the sets of fixed points of 
$\alpha_2\beta'$,$\alpha_3\beta'$,$\alpha_4\beta'$,
$\alpha_5\beta',\alpha_6\beta'$
are
$Y\cup \{3,6,7\},
Y\cup \{3\},
Y\cup \{2,3\},
Y\cup \{3\},
Y\cup \{2,3,6,7\}.
$
respectively.
The numbers of fixed points distinguishes the conjugacy classes of
$\alpha\beta'$
$\alpha_2\beta'$,$\alpha_3\beta'$,$\alpha_4\beta'$ and
$\alpha_6\beta'$ 
from each other.
\end{proof}

\begin{lemma}\label{bothsplitting}
Let $\alpha,\beta\in A_n$ with $n\geq 6$. Assume that  $\alpha^{A_n}\neq \alpha^{S_n}$ and
 $\beta^{A_n}\neq \beta^{S_n}$. Then 
$\eta(\alpha^{A_n}\beta^{A_n})\geq 5$.
\end{lemma}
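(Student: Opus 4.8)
The plan is to separate cases according to the cycle structures of $\alpha$ and $\beta$: a ``generic'' case handled by Lemma~\ref{wasclaim}, and a short list of small values of $n$ handled directly.

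First, since $\alpha^{A_n}\neq\alpha^{S_n}$ and $\beta^{A_n}\neq\beta^{S_n}$, Lemma~\ref{splittingintotwo} forces every cycle length of $\alpha$ to be odd, with no two cycles of $\alpha$ of equal length, and likewise for $\beta$; by Corollary~\ref{cora:splittingintotwo} each of $\alpha,\beta$ fixes at most one point. In particular every nontrivial cycle has length in $\{3,5,7,9,\dots\}$. Using Lemma~\ref{order} to interchange $\alpha$ and $\beta$ if needed, we are in one of two situations: either (A) both $\alpha$ and $\beta$ have a cycle of length $\geq 7$, or (B) $\alpha$ has all cycles of length $\leq 5$.

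In case (A) I would reduce to Lemma~\ref{wasclaim}. Conjugating all of $A_n$ by a (possibly odd) element of $S_n$ permutes the conjugacy classes of $A_n$ and so leaves $\eta(\alpha^{A_n}\beta^{A_n})$ unchanged; hence it suffices to produce, independently, one representative of $\alpha^{A_n}$ and one of $\beta^{A_n}$ jointly satisfying the hypotheses of Lemma~\ref{wasclaim}. Picking a $7$-or-longer cycle of $\alpha$ and relabelling, we may take $\alpha=(1\ 2\ 3\ 4\ 5\ 6\ \cdots\ a_1)\gamma$ with $\gamma$ fixing $1,\dots,a_1$; if the relabelling needed lands in the wrong $A_n$-class we correct it by a further conjugation by a transposition of two of the points $a_1+1,\dots,n$, which changes the class but not the displayed form, and is available as soon as $n-a_1\geq 2$. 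Similarly $\beta=(1\ 2\ 3\ 4\ 5\ 6\ \cdots\ b_1)\delta$; since the odd permutation $(2\ 4)(3\ 6\ 5)$ carries this element to the other $A_n$-class, the two inequalities of Lemma~\ref{wasclaim} together cover both classes of $\beta$, giving $\eta(\alpha^{A_n}\beta^{A_n})\geq5$. The only loose ends are the ``single long cycle'' shapes, where $n-a_1\leq1$ forces $\alpha$ to be an $n$-cycle ($n$ odd) or of type $[1,n-1]$ ($n$ even); if $\beta$ is not of the same shape we swap $\alpha$ and $\beta$, and if it is, then both $\alpha^{A_n}$ and $\beta^{A_n}$ lie among the (at most two) such classes, and one applies the first or the second inequality of Lemma~\ref{wasclaim} directly. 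In case (B), $\alpha$ has at most one $3$-cycle, at most one $5$-cycle, and at most one fixed point, so $n\leq3+5+1=9$; with $n\geq6$ and the observation that no element of $A_7$ with all cycles of length $\leq5$ splits, this leaves $n\in\{6,8,9\}$. Listing the splitting types available when $n\leq9$ — $[1,5]$ for $n=6$, $[3,5]$ and $[1,7]$ for $n=8$, $[1,3,5]$ and $[9]$ for $n=9$ — reduces (B) to finitely many pairs $(\alpha^{A_n},\beta^{A_n})$, each of which one checks by exhibiting five $A_n$-conjugates $\alpha_i$ of $\alpha$ with $\alpha_i\beta$ having pairwise distinct numbers of fixed points (as in the proof of Lemma~\ref{wasclaim}), or by a direct computation in $A_6$, $A_8$, $A_9$.

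The main obstacle I anticipate is the bookkeeping in case (A): when normalizing $\alpha$ and $\beta$ to convenient representatives one must stay in, or correctly track, the given $A_n$-class rather than its $S_n$-partner, and the single-long-cycle shapes are the delicate sub-case — the second conclusion $\eta(\alpha^{A_n}\beta^{(2\ 4)(3\ 6\ 5)A_n})\geq5$ of Lemma~\ref{wasclaim} is precisely what is engineered to close it. A secondary nuisance is carrying out the explicit verifications in the three small cases $n=6,8,9$.
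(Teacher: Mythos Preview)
Your proposal is correct and follows the same overall route as the paper: use Lemma~\ref{splittingintotwo} and Corollary~\ref{cora:splittingintotwo} to see that $\alpha,\beta$ have distinct odd cycle lengths and at most one fixed point; dispose of the situation where one of them has no cycle of length $\ge 7$ (forcing $n\in\{6,8,9\}$) by direct computation; and in the remaining case invoke Lemma~\ref{wasclaim}.

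The one place where you work harder than necessary is your normalization in case~(A). You try to put \emph{each} of $\alpha$ and $\beta$ into the form $(1\ 2\ \cdots\ a_1)\gamma$ \emph{within its own $A_n$-class}, which then forces the ``loose-end'' analysis when $n-a_1\le 1$. The paper avoids this entirely: it conjugates the whole picture by a single (possibly odd) $\tau\in S_n$ to make $\alpha=(1\ 2\ \cdots\ a_1)\gamma$ exactly; since $\eta\bigl((\alpha^\tau)^{A_n}(\beta^\tau)^{A_n}\bigr)=\eta(\alpha^{A_n}\beta^{A_n})$, nothing is lost. After this, $\beta$ lies in one of the two $A_n$-classes inside $\beta^{S_n}$, and the two conclusions of Lemma~\ref{wasclaim} (with the fixed odd $\mu=(2\ 4)(3\ 6\ 5)$) handle precisely those two classes. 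So no separate treatment of $n$-cycles or type $[1,n-1]$ is needed. Your loose-end argument does go through (using that conjugation by an odd element swaps the two split classes, so $\eta(C_2\cdot C_2)=\eta(C_1\cdot C_1)$), but it is extra bookkeeping that the simultaneous-conjugation viewpoint eliminates.

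On the other hand, your case~(B) is slightly more thorough than the paper's: you correctly note that for $n=8$ one must also allow $\beta$ of type $[1,7]$, and for $n=9$ also $\beta$ of type $[9]$, whereas the paper's text only explicitly names the pairs where both types have all cycles $\le 5$. (The tables in the Appendix cover all the needed entries in any case.)
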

\begin{proof}
By Lemma~\ref{splittingintotwo}
and
Corollary~\ref{cora:splittingintotwo}, we have that both 
$\alpha$ and $\beta$ fix at most one element 
and they are  products of disjoint cycles of different odd order.

By Lemma~\ref{splittingintotwo} and computations with GAP, we may
assume that both $\alpha$ and $\beta$ have cycles of length at 
least $7$.  This follows, since by Lemma~\ref{splittingintotwo},
we know that $\alpha$ and $\beta$ must have cycles of distinct odd
lengths.
If there is no cycle of length at least $7$, then $n=1,4,8$ or $9$.
However, we assume $n\ge 6$, so we must have $n=8$ or $9$, and
the cycle types of $\alpha$ and $\beta$ must be either
$[1,5]$, $[3,5]$, or $[1,3,5]$.
From Tables~\ref{table:eta_for_A6},
\ref{table:eta_for_A8} and
\ref{table:eta_for_A9}
 we see that in these cases, we have
$\eta(\alpha^{A_n}\beta^{A_n})=7$, $13$ and $17$ respectively.
and so we already have 
$\eta(\alpha^{A_n}\beta^{A_n})\ge 5$ in these cases.

By Lemma~\ref{cora:splittingintotwo}
and Lemma~\ref{order},  
 and renaming if 
necessarily, we may assume that 
\begin{equation}
\alpha=(1 \ 2 \ 3\ 4 \ 5\ 6 \cdots \ a_1)\gamma, 
\end{equation}
\noindent and
\begin{equation}
\mbox{ either } \beta=(1 \ 2 \ 3\ 4 \ 5\ 6 \cdots \ b_1)\delta \mbox{ or }
\beta=(1\ 2\ 3\ 4 \ 5\ 6 \cdots \ b_1)^\mu\delta,
\end{equation}
\noindent for some permutations $\gamma$ and $\delta$, where
$a_1\ge 7$, $b_1\ge 7$ and $\gamma$ fixes $1,\dots,a_1$
and $\delta$ fixes $1,\dots,b_1$, and $\mu$ is any odd permutation.
These two cases are dealt with by Lemma~\ref{wasclaim}.
Note that by conjugation in $A_n$
we can take any fixed choice of odd $\mu$, and a convenient choice is
$\mu=(2\ 4)(3\ 6\ 5)$.
\end{proof}

\subsection{An inductive argument}

We would like to apply an inductive argument in this section.
We'd like to be able to pass from $\alpha^{S_n}\beta^{S_n}$
to something akin to $\alpha^{S_{n-1}}\beta^{S_{n-1}}$.
Instead of using $\eta$, we will use $\eta'$ defined below.
To define this we need to choose a particular element,
$\stand(\alpha)$ of any
conjugacy class $\alpha^{S_n}$ of $S_n$.
Note that $\eta'$ and $\stand$ are only used in this subsection.

\begin{definition}
Suppose that $\alpha\in S_n$ has cycle structure as in 
(\ref{eqn:standardcycletype}).
We define the element $\stand(\alpha)$ of $\alpha^{S_n}$ by
$$\stand(\alpha)=
(1+n_0\ \dots\ a_1+n_0)(a_1+n_0+1\ \dots\ 2a_1+n_0)\cdots(n-a_r+1\dots\ n).$$
I.e., the cycles are written in increasing order of length, and the
elements appearing are written in increasing consecutive order.
Elements $1,\dots,n_0$ are fixed.  The element $n$ is in a cycle of
maximum length, so unless $\alpha=e$, $\stand(\alpha)$ never
fixes $n$.
\end{definition}

\begin{example}
For the following element of $A_{12}$ we have
\begin{eqnarray*}
\stand(8\ 9)(5\ 6\ 7\ 1)(11)(12)&=&
(1)(2)(3)(4)(5)(6)(7\ 8)(9\ 10\ 11\ 12).
\end{eqnarray*}
\end{example}

\begin{definition}
\label{defmodifiedeta}
Let $S_n$ be the group of permutations on the set $\{1,\dots,n\}$.
Define $\eta'(a,b)$ for $a,b\in S_n$ by
$$\eta'(a,b)=
\#\{[(\stand(a)^{-1})^\sigma \stand(b)] 
\;:\; \sigma \in S_n,\; \sigma \text{ fixes } n\}.$$
This definition is choosen because in the inductive step, 
(details in Proposition~\ref{prop:inductionstep}),
as well as having $(n)\sigma=n$,
we will also take
$(n-1)\sigma=(n-1)$. In this case,
provided both $\alpha$ and $\beta$ are not the identity, we have
$(n-1)\stand(\alpha)=n$, $(n-1)\stand(\beta)=n$,
and $(n)\stand(\alpha)^{-1}=n-1$,
so 
\begin{eqnarray}
(n)(\stand(\alpha)^{-1})^\sigma\stand(\beta)&=&
(n)\sigma^{-1}\stand(\alpha)^{-1}\sigma\beta
=(n)\stand(\alpha)^{-1}\sigma\beta\\
&=&(n-1)\sigma^{-1}\stand(\beta)
\nonumber
=(n-1)\stand(\beta)=n.\end{eqnarray}
\end{definition}

\begin{lemma}
\label{lem:comparisonofetas}
With $a,b\in S_n$ 
$$\eta(a^{S_n}b^{S_n})\ge \eta'(a,b).$$
\end{lemma}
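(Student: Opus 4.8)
The plan is to unwind both definitions and exhibit an injection from the set counted by $\eta'(a,b)$ into the set of conjugacy classes making up $a^{S_n}b^{S_n}$. First I would observe that every element of $a^{S_n}b^{S_n}$ is of the form $a^x b^y$ for $x,y\in S_n$; replacing $x$ by $x^{-1}$ for notational convenience, a typical element is $x^{-1}ax\cdot y^{-1}by$. Since conjugating by $y$ does not change the cycle type, $[x^{-1}axy^{-1}by] = [(xy^{-1})^{-1}a(xy^{-1})\cdot b]$, so as $x,y$ range over $S_n$ the cycle types appearing in the product are exactly $\{[a^\sigma b] : \sigma\in S_n\}$, and using that $\stand(a)$ and $\stand(b)$ are genuinely conjugate to $a$ and $b$ respectively, this set equals $\{[(\stand(a))^\sigma\stand(b)] : \sigma\in S_n\}$. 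Replacing $\sigma$ by $\sigma^{-1}$ and absorbing a conjugation, one sees equally that $\eta(a^{S_n}b^{S_n}) = \#\{[(\stand(a)^{-1})^\sigma \stand(b)] : \sigma\in S_n\}$, because inverting $\stand(a)$ only changes which cycle type is hit, not how many, and $a^{-1}$ has the same conjugates available.

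Next I would compare this full count with the restricted one. The set counted by $\eta'(a,b)$ is obtained from the set counted by $\eta(a^{S_n}b^{S_n})$ by restricting $\sigma$ to the point stabilizer of $n$ in $S_n$. Restricting the range over which $\sigma$ varies can only shrink (or preserve) the resulting set of cycle types, hence can only decrease (or preserve) its cardinality. Therefore $\#\{[(\stand(a)^{-1})^\sigma\stand(b)] : \sigma\in S_n\} \ge \#\{[(\stand(a)^{-1})^\sigma\stand(b)] : \sigma\in S_n,\ \sigma \text{ fixes } n\}$, which is precisely the asserted inequality $\eta(a^{S_n}b^{S_n})\ge\eta'(a,b)$.

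The only genuinely delicate point — and the step I expect to need the most care — is the very first reduction: justifying cleanly that the multiset of conjugacy classes occurring in $a^{S_n}b^{S_n}$ is faithfully described by ranging a single permutation $\sigma$ over $S_n$ in the expression $(\stand(a)^{-1})^\sigma\stand(b)$, i.e. that replacing $a,b$ by $\stand(a)^{-1},\stand(b)$ and collapsing the two conjugations $x,y$ into one is legitimate. This is routine but one must be honest that it uses Lemma~\ref{lemma1} (cycle type determines the $S_n$-class), the fact that $\stand(a)\in a^{S_n}$ and $\stand(b)\in b^{S_n}$, and the trivial identity $a^{-1}$ having the same $S_n$-conjugates as $a$ twisted by an inner automorphism, so that the count of distinct cycle types is unaffected by the inverse. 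Once that bookkeeping is in place, the inequality is immediate from the set-inclusion of the index sets for $\sigma$.
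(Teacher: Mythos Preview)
Your proposal is correct and follows essentially the same route as the paper: rewrite $\eta(a^{S_n}b^{S_n})$ as $\#\{[(\stand(a)^{-1})^\sigma\stand(b)]:\sigma\in S_n\}$ using that $\stand(a)\in a^{S_n}$, that $\alpha^\sigma\beta^\tau$ is conjugate to $\alpha^{\sigma\tau^{-1}}\beta$, and that $a^{-1}$ is $S_n$-conjugate to $a$; then observe that restricting $\sigma$ to the stabilizer of $n$ only shrinks the set. The one place your wording wobbles is the phrase ``replacing $\sigma$ by $\sigma^{-1}$ and absorbing a conjugation'' for the inverse step---that maneuver does not itself produce $\stand(a)^{-1}$; the clean justification (which you also state) is simply that $\stand(a)$ and $\stand(a)^{-1}$ have the same cycle type and hence the same set of $S_n$-conjugates, so the two index sets coincide outright.
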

\begin{proof}
For any $\beta\in S_n$, 
$\beta$ is conjugate to $\beta^{-1}$ and to $s(\beta)$.
So since $\alpha^\sigma\beta^\tau$ is conjugate to 
$\alpha^{\sigma\tau^{-1}}\beta$,
and since $\alpha^{S_n}$ depends on $\alpha$ only up to conjugacy,
we have 
$$\eta(a^{S_n}b^{S_n})=
\#\{[(\stand(a)^{-1})^\sigma \stand(b)] \;:\; \sigma \in S_n\}.$$
The set involved contains the set used to define $\eta'$, and so
the inequality follows.
\end{proof}

Thus in order to find a lower bound on $\eta(a^{S_n}b^{S_n})$, 
it suffices to find
a lower bound on $\eta'(a,b)$.

\begin{prop}
\label{prop:inductionstep}
Fix an integer $n>1$.
Let $\alpha,\beta\in S_n\setminus{e}$ with cycle structure
\begin{eqnarray*}
[\alpha]&=&[
\underbrace{1,\cdots,1}_{n_0}, 
\underbrace{a_1,\cdots,a_1}_{n_1}, 
\underbrace{a_2,\cdots,a_2}_{n_2}, \ldots, 
\underbrace{a_r,\cdots,a_r}_{n_r}]\\
{[}
\beta ]&=&[
\underbrace{1,\cdots,1}_{m_0}, 
\underbrace{b_1,\cdots,b_1}_{m_1}, 
\underbrace{b_2,\cdots,b_2}_{m_2}, \ldots, 
\underbrace{b_s,\cdots,b_s}_{m_s}]\\
\end{eqnarray*}

where
\begin{equation}
\begin{array}{l}
r,s\ge 1 , n_i,m_j\ge 1 , \\
n_i\ge 1\text{  for  }1\le i\le r , m_i\ge 1\text{  for  }1\le i\le s ,\\
1<a_1<a_2<\cdots< a_{r-1}<a_r , 1<b_1<b_2<\cdots<b_{s-1}<b_s ,\\
\sum_{i=1}^{r}n_ia_i=\sum_{i=1}^{s}m_ib_i=n ,\\
\alpha(2)=1 , \; \beta(1)=2 ,\\
\alpha^{a_1}(2)=\beta^{a_1}(2)=2.
\end{array}
\end{equation}

Then there exist
$\alpha',\beta'\in S_{n-1}$ with
cycle structure
\begin{eqnarray*}
[\alpha']&=&[
\underbrace{1,\cdots,1}_{n_0}, 
\underbrace{a_1,\cdots,a_1}_{n_1}, 
\underbrace{a_2,\cdots,a_2}_{n_2}, \ldots, 
\underbrace{\cdots,a_r-1}_{+1},
\underbrace{a_r,\cdots,a_r}_{n_r-1}]\\
{[}
\beta' ]&=&[
\underbrace{1,\cdots,1}_{m_0}, 
\underbrace{b_1,\cdots,b_1}_{m_1}, 
\underbrace{b_2,\cdots,b_2}_{m_2}, \ldots, 
\underbrace{\cdots,b_s-1}_{+1},
\underbrace{b_s,\cdots,b_s}_{m_s-1}]
\end{eqnarray*}
(i.e., there is one fewer cycle of maximal length, and one more cycle of
length one less than this)
such that
\begin{equation}
\label{induction1}
\eta'(\alpha,\beta)\ge \eta'(\alpha',\beta').
\end{equation}
\end{prop}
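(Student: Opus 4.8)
The plan is to compute $\eta'$ with cleverly chosen representatives and then strip off the point $n$. First I would prove the following representative-independence statement, in the style of Lemma~\ref{lem:comparisonofetas}: if $a^{*}\in S_n$ is conjugate to $\alpha$ and has $n$ lying in a cycle of maximal length, and likewise $b^{*}$ for $\beta$, then
$$\eta'(\alpha,\beta)=\#\{\,[(a^{*-1})^{\sigma}b^{*}]\ :\ \sigma\in S_n,\ \sigma(n)=n\,\}.$$
Indeed, write $a^{*}=\stand(\alpha)^{\rho}$ and $b^{*}=\stand(\beta)^{\theta}$; since $n$ lies in a maximal cycle of each of $\stand(\alpha),a^{*}$ and of each of $\stand(\beta),b^{*}$, one may choose $\rho,\theta$ to fix $n$, and then $(a^{*-1})^{\sigma}b^{*}$ is conjugate in $S_n$ to $(\stand(\alpha)^{-1})^{\rho\sigma\theta^{-1}}\stand(\beta)$, where $\rho\sigma\theta^{-1}$ runs over the stabiliser of $n$ as $\sigma$ does.

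Next I would build $\alpha'$ and $\beta'$ and strip off $n$. Choose $a^{*}\sim\alpha$ and $b^{*}\sim\beta$ with $n$ in a maximal cycle so that, moreover, deleting $n$ from its cycle leaves permutations $\alpha',\beta'\in S_{n-1}$ in which $n-1$ still lies in a maximal cycle: if $\alpha$ has a unique cycle of maximal length $a_r$, simply take $a^{*}=\stand(\alpha)$ (so that $n-1$ is the predecessor of $n$), while if $\alpha$ has several cycles of length $a_r$, put $n$ at the end of one and $n-1$ at the end of another. In every case $\alpha'$ and $\beta'$ have exactly the cycle types prescribed in the statement. (In the genuinely degenerate reductions --- $a_r=2$ with $r=1$, so that $\alpha'=e$ --- one has $\eta'(\alpha',\beta')=1\le\eta'(\alpha,\beta)$, and these may be set aside.) Now restrict $\sigma$ to the permutations fixing both $n$ and $n-1$, i.e.\ $\sigma\in S_{\{1,\dots,n-2\}}$. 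When $\alpha$ and $\beta$ each have a unique maximal cycle, $a^{*}=(n-1\ n)\alpha'$ and $b^{*}=(n-1\ n)\beta'$; since $\sigma$ commutes with $(n-1\ n)$ the two transpositions cancel, so $(a^{*-1})^{\sigma}b^{*}=(\alpha'^{-1})^{\sigma}\beta'$, which fixes $n$. Hence the restricted count equals $\#\{[(\alpha'^{-1})^{\sigma}\beta']:\sigma\in S_{n-1},\ \sigma(n-1)=n-1\}$, which is $\eta'(\alpha',\beta')$ by the representative-independence statement applied inside $S_{n-1}$ (legitimately, since $n-1$ lies in a maximal cycle of $\alpha',\beta'$). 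As the restricted count is at most $\eta'(\alpha,\beta)$, we obtain $\eta'(\alpha,\beta)\ge\eta'(\alpha',\beta')$.

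The step I expect to be the main obstacle is the case in which $\alpha$ (or $\beta$) has more than one cycle of maximal length, which is precisely the situation arising in the main theorem. There $n-1$ cannot be placed immediately before $n$, so writing $a^{*}=(\mu\ n)\alpha'$ and $b^{*}=(\nu\ n)\beta'$ with $\mu=(a^{*})^{-1}(n)$ and $\nu=(b^{*})^{-1}(n)$, the cancellation above is imperfect: for $\sigma$ fixing $n$ and $n-1$ one computes
$$(a^{*-1})^{\sigma}b^{*}=\big[(\alpha'^{-1})^{\sigma}\beta'\big]\cdot\big(\beta'(\sigma(\mu))\ \ \beta'(\nu)\ \ n\big),$$
a correction by a $3$-cycle through $n$ that depends on $\sigma$ (it reduces to the identity exactly when $\sigma(\mu)=\nu$). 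The heart of the argument is to use the remaining freedom in the choice of $a^{*},b^{*}$ to arrange that this residual $3$-cycle always inserts $n$ into a prescribed cycle of $(\alpha'^{-1})^{\sigma}\beta'$, so that $\sigma\mapsto[(a^{*-1})^{\sigma}b^{*}]$ distinguishes conjugacy classes at least as finely as $\sigma\mapsto[(\alpha'^{-1})^{\sigma}\beta']$; then the restricted count still dominates $\eta'(\alpha',\beta')$ and one concludes as before. Keeping track of the several degenerate sub-cases ($a_r-1$ equal to $1$ or to $a_{r-1}$, and symmetrically for $\beta$) is the remaining bookkeeping.
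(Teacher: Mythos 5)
Your representative-independence lemma is correct and is actually more careful than anything stated in the paper, and your treatment of the case where $\alpha$ and $\beta$ each have a \emph{unique} cycle of maximal length ($n_r=1$ and $m_s=1$) coincides with the paper's computation: there $\stand(\alpha)=(n-1\ n)\alpha'$ and $\stand(\beta)=(n-1\ n)\beta'$, the transpositions cancel for $\sigma$ fixing $n-1$ and $n$, and the restricted count equals $\eta'(\alpha',\beta')$ because $n-1$ lies in a maximal cycle of both $\alpha'$ and $\beta'$. That part is fine.

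The genuine gap is the case $n_r\ge 2$ (or $m_s\ge 2$), which you flag but do not close — and it is not a side case: it is exactly the situation $[\alpha]=[2,2,\dots,2]$ or $[3,3,\dots,3]$ that the main theorem is about. Your proposed repair (put $n$ and $n-1$ at the ends of two different maximal cycles and control the residual $3$-cycle $(\beta'(\sigma(\mu))\ \beta'(\nu)\ n)$) is only a hope: multiplying $(\alpha'^{-1})^{\sigma}\beta'$ by a $3$-cycle through $n$ whose other two entries depend on $\sigma$ can merge two cycles of the product or cut one open, and which of these happens is not determined by the cycle type of $(\alpha'^{-1})^{\sigma}\beta'$ alone; so the key claim that $\sigma\mapsto[(a^{*-1})^{\sigma}b^{*}]$ refines $\sigma\mapsto[(\alpha'^{-1})^{\sigma}\beta']$ is unsubstantiated, and there is no evident reason it should hold for every $\sigma$. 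For comparison, the paper does not confront this case either: it always deletes $n$ from the last cycle of $\stand(\alpha)$ and then asserts $\stand(\alpha')=\alpha'$, which is false precisely when $n_r\ge 2$ (e.g.\ $\alpha=(1\ 2)(3\ 4)$ gives $\alpha'=(1\ 2)$, whereas $\stand(\alpha')=(2\ 3)$ in $S_3$); identifying the restricted count with $\eta'(\alpha',\beta')$ then requires relating a count over a twisted coset $\rho\,\mathrm{Stab}(n-1)\,\theta^{-1}$ to one over $\mathrm{Stab}(n-1)$, which is the very point you are missing. So you have correctly located the delicate step, but as written your argument proves the proposition only when both $\alpha$ and $\beta$ have a unique longest cycle.
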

\begin{proof}
Note that since $\alpha,\beta\not=e$, 
$a_r,b_s>2$, so $n$ is not a fixed point.  
Since $\eta'(a,b)=\eta(\stand(a),\stand(b))$,
we may assume for simplicity that $\alpha=s(\alpha)$ and $\beta=s(\beta)$.

Suppose that
\begin{eqnarray*}
\alpha&=&\stand(\alpha)=\alpha_2(u\ \cdots\ n-2\ n-1\ n),\\
\beta&=&\stand(\beta)  =\beta_2(v\ \cdots\ n-2\ n-1\ n),
\end{eqnarray*}
where $u=n-a_r, v=n-b_s$, and $\alpha_2$ fixes
$u,\dots,n$ and $\beta_2$ fixes $v,\dots,n$.

Now take $\sigma\in S_n$ with $(n)\sigma=n$ and $(n-1)\sigma=n-1$.
Then
\begin{eqnarray*}
(\alpha^{-1})^\sigma\beta
&=&
(\alpha_2^{-1})^\sigma\Big(n\ n-1\ \sigma(n-2)\ \cdots\ 
\sigma(u)\Big)
\beta_2\Big(v\  \cdots\ \ n-1\ n\Big)
\\
&=&
(n)(\alpha_2^{-1})^\sigma
\Big(n-1\ \sigma(n-2)\ \cdots\ 
\sigma(u)\Big)
\beta_2\Big(v\  \cdots\ \ n-1\Big).
\end{eqnarray*}
So setting
\begin{eqnarray*}
\alpha'&=&\alpha_2\Big(u\ \cdots\ n-2\ n-1\Big)\\
\beta'&=&\beta_2\Big(v\ \cdots\ n-2\ n-1\Big),
\end{eqnarray*}
we have
$$(\alpha^{-1})^\sigma\beta=((\alpha')^{-1})^{\sigma}(\beta')^{-1}.$$
Note that since $\stand(\alpha)=\alpha$ and
$\stand(\beta)=\beta$, we also have
$\stand(\alpha')=\alpha'$ and
$\stand(\beta')=\beta'$ (as elements of $S_{n-1}$).
So
\begin{eqnarray*}
\eta'(\alpha',\beta')
&=&\#\{[((\alpha')^{-1})^\sigma\beta' | \sigma\in S_{n-1}, \sigma
\text{ fixes }n-1\}\\
&=&
\#\{[((\alpha)^{-1})^\sigma\beta | \sigma\in S_{n}, \sigma
\text{ fixes }n-1 \text{ and }n\}\\
&\le&
\#\{[((\alpha)^{-1})^\sigma\beta | \sigma\in S_{n}, \sigma
\text{ fixes } n\}\\
&=&
\eta'(\alpha,\beta).
\end{eqnarray*}
Thus the inequality (\ref{induction1}) holds.
\end{proof}

\begin{lemma}
\label{lem:fourcyclemodifiedeta}
For $\alpha=(1\ 2\ 3\ 4)$ and $\beta\in S_n$ with $n\ge 8$,
and $\beta(1)\not=1$,
 we have
$$\eta'(\alpha,\beta)\ge 5$$
\end{lemma}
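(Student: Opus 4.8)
The plan is to first rewrite $\eta'(\alpha,\beta)$ in concrete terms. Since $\alpha=(1\ 2\ 3\ 4)$ has cycle type $[1^{n-4},4]$, we have $\stand(\alpha)=(n-3\ n-2\ n-1\ n)$, so $\stand(\alpha)^{-1}$ is a $4$-cycle whose support contains $n$, and conjugating it by the permutations $\sigma\in S_n$ fixing $n$ produces exactly the $4$-cycles whose support contains $n$. Hence, writing $B=\stand(\beta)$,
\[
\eta'(\alpha,\beta)=\#\bigl\{\,[C\,B]\ :\ C\text{ is a }4\text{-cycle and }n\in\mathrm{supp}(C)\,\bigr\}.
\]
Since $\beta(1)\ne1$ we have $\beta\ne e$, so by the definition of $\stand$ the point $n$ lies in the longest cycle of $B$, of some length $b_s\ge2$, and $B$ fixes precisely $1,\dots,m_0$, where $m_0$ is the number of fixed points of $\beta$. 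Note also that Proposition~\ref{prop:inductionstep} is of no use here, since a further reduction would shrink the $4$-cycle $\alpha$; the bound must be proved directly.

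The invariant that does the bookkeeping is the number of fixed points of $CB$, which depends only on the cycle type $[CB]$, exactly as in the proof of Lemma~\ref{wasclaim}; when two of the products have the same number of fixed points I would separate them by their multiset of cycle lengths. The structural fact to exploit is that a $4$-cycle $C=(n\ p\ q\ r)$ changes $B$ only on the four points $n,p,q,r$ — every other point is fixed by $CB$ if and only if it is fixed by $B$ — so $[CB]$ is completely governed by how the three free points $p,q,r$ are distributed among (i) the fixed points of $B$, (ii) the cycle of $B$ through $n$, and (iii) the remaining cycles of $B$. The plan is then to exhibit five $4$-cycles $C_1,\dots,C_5$ through $n$ for which the products $C_iB$ have pairwise distinct cycle types, the construction being organised by the size of $b_s$.

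If $b_s\ge7$, take $C$ with support contained in the $b_s$ letters of the cycle of $B$ through $n$; then $CB$ restricted to that cycle is a $4$-cycle times a $b_s$-cycle on $b_s$ letters, while elsewhere $CB$ agrees with $B$, so it is enough to check that such products attain at least five cycle types (for instance, one can realise the shapes $[1,b_s-1]$, $[2,b_s-2]$, $[3,b_s-3]$, $[1^3,b_s-3]$ and $[1^2,2,b_s-4]$). If $4\le b_s\le6$, I would write down a short explicit list of $4$-cycles through $n$ — some contained in the long cycle, some straddling it together with points outside $\mathrm{supp}(B)$, some absorbing a shorter cycle of $B$ — and verify that five cycle types occur; here the hypothesis $n\ge8$ is used, since one of the natural products is an $8$-cycle. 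If $b_s\in\{2,3\}$, then $\beta$ is a product of $2$- and $3$-cycles, and provided $\beta$ moves at least four points (i.e.\ is not a single transposition or a single $3$-cycle) it has at least two nontrivial cycles and a similar short list again yields five distinct types.

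The step I expect to be the real work is this last, low-complexity regime. When $b_s$ and $|\mathrm{supp}(\beta)|$ are both small the fixed-point count is too coarse, and one must enumerate by hand the finitely many ways $C$ can overlap the $2$- and $3$-cycles of $B$ and the cycle through $n$, reading off the resulting partition of $n$ in each case. It is precisely here that the hypotheses are sharp: a single transposition gives $\eta'(\alpha,\beta)=3$ and a single $3$-cycle gives $\eta'(\alpha,\beta)=4$, so one must know that $\beta$ is neither — these are exactly the exceptional cycle types of Theorem~\ref{thm:maintheorem}, dealt with separately. The rest is routine bookkeeping, and the finitely many small cases that remain could, if desired, be checked with GAP as in the Appendix tables.
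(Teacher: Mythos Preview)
Your reformulation of $\eta'(\alpha,\beta)$ as the number of cycle types $[CB]$, $B=\stand(\beta)$, with $C$ ranging over $4$-cycles through $n$, is correct, and your overall strategy---exhibit five explicit such $C$ with pairwise distinct $[CB]$---is exactly what the paper does. The organisation differs: you split by the length $b_s$ of the longest cycle of $B$, while the paper splits by the number of (nontrivial) cycles of $\beta$ and then writes down, for each case, a table of ten or so concrete $4$-cycles together with the resulting cycle types. Your $b_s\ge 7$ case is essentially complete (the five shapes you name are all realised and are pairwise distinct for $b_s\ge 6$, in fact), but your intermediate and small-$b_s$ cases are only promised, not carried out; the paper actually executes the corresponding lists, and that is where all the work lies.

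Your closing observation is sharp and worth stating plainly: the hypothesis $\beta(1)\ne 1$ is too weak for the conclusion. With $\beta=(1\ 2)$ one gets $\eta'(\alpha,\beta)=3$, and with $\beta=(1\ 2\ 3)$ one gets $\eta'(\alpha,\beta)=4$, so the lemma as stated is false for these $\beta$. The paper's own case analysis also silently omits them (a single transposition or $3$-cycle with many fixed points falls into none of the three cases treated), and the lemma is only ever applied in Proposition~\ref{prop:reducetothreecases} under the extra assumption that $\beta$ is not conjugate to $e$, $(1\ 2)$, or $(1\ 2\ 3)$. So you have correctly identified a gap in the stated hypothesis; the fix is to add that $\beta$ is not a single $2$- or $3$-cycle, after which both your outline and the paper's argument go through.
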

\begin{proof}
Suppose that $\beta$ consists of at least $3$ cycles,
at least $2$ of which are nontrivial.
Suppose with the cycle lengths written in increasing order,
with longest cycles of lengths $b_1,b_2,b_3$, we have
$$[\beta]=[S,b_1,b_2,b_3],$$ 
where $S$ is some sequence of integers,
and $b_2,b_3\ge 2$.
We can assume $\beta=\stand(\beta)$, 
and the cycles of the lengths $b_1,b_2,b_3$ start
$(m_1\ \cdots)$,
$(m_2\ m_3\ \cdots)$,
$(m_4\ \ \cdots\ n)$, so
$$\beta=\stand(\beta)=
\beta_2
(m_1\ \cdots)
(m_2\ m_3\ \cdots)
(m_4\ \ \cdots\ n),$$
where $\beta_2$ fixes $1,\dots,m_1-1$,
and $m_1=n-b_1-b_2-b_3$,
$m_2=n-b_2-b_3$, $m_3=m_2+1$, and $m_4=n-b_3$. 

Then we have at least the following possible cycle
types of elements of the form $(\stand(\alpha)^{-1})^\sigma\stand(\beta)$, with $\sigma(n)=n$:
\begin{eqnarray*}
{[}( n\ m_1 \ m_2 \ m_3 ) \somecycle   b_1 + b_3 +1,   b_2-1,S\\
{[}( n\ m_1 \ m_2 \ m_4 ) \somecycle   b_1+b_2+1,      b_3-1,S\\
{[}( n\ m_2 \ m_1 \ m_3 ) \somecycle   b_3+1,  b_1+b_2-1,S\\
{[}( n\ m_3 \ m_1 \ m_2 ) \somecycle   b_1+1,  b_2+b_3-1,S\\
{[}( n\ m_1 \ m_3 \ m_2 ) \somecycle   b_1+b_2+b_3-1, 1,S\\
{[}( n\ m_2 \ m_4 \ m_3 ) \somecycle   b_1,     b_2+b_3,S\\
{[}( n\ m_2 \ m_3 \ m_4 ) \somecycle    2, b_1, b_2-1, b_3-1,S\\
{[}( n\ m_4 \ m_2 \ m_3 ) \somecycle    1, b_1, b_2-1, b_3,S\\
{[}( n\ m_4 \ m_3 \ m_2 ) \somecycle    1, 1,  b_1,   b_2+b_3-2,S\\
{[}( n\ m_3 \ m_2 \ m_4 ) \somecycle    1, b_1, b_2,   b_3-1,S
\end{eqnarray*}
(Note that the cycle lengths in these sequences on the right
hand sides are not in general
in increasing numerical order.)
In general, these will be distinct.  In special cases some of these
cycle types will be the same; but we can show that
there are at least $4$ different cycle types in all cases, and
if there are only $4$ cycle types, then we must have
either $b_2=b_3=2$ or $b_1=1,b_2=2,b_3=3$.

In these cases $b_2=2$ and $b_3\le 3$, so except possibly for one cycle of
length $3$, all cycles must have length at most $2$ (since $b_1,b_2,b_3$
were maximal length cycles in $\beta$), and since
$n\ge8$, there are at least $4$ cycles, so we can take
$$[\beta]=[R,b_4,b_1,b_2,b_3],$$ 
with $b_2,b_2,b_3$ as above, and $R$ a sequence, and
$b_4$ an integer, with $R,b_4=S$.  Then
$${[}(n\ m_5\ m_1\ m_2)\cdot\beta_2
(m_5\  \cdots)(m_1\ \cdots)(m_2\ m_3\ \cdots)(m_4\ \cdots n){]}=
b_1+b_2+b_3+b_4,R,$$
which is distinct from any of the above listed cycle types, since it
has few terms in this sequence.
Thus in this case $\eta'(\alpha,\beta)\ge5$.

Now assume that 
$\beta$ consists of 
either $2$ nontrivial cycles, and no fixed points
(since two nontrivial cycles and one fixed point is just the case 
of three cycles with $b_1=1$), or exactly two cycles, one of which may have
length one.
We write
$$\beta=\stand(\beta)=\beta_2(m_5\  \cdots)(m_2\ m_3\ m_4\ \cdots\ n),$$
with $m_2=n-b_2$ and $m_5=n-b_1-b_2$, and with
$\beta$ having cycle structure $S,b_1,b_2$.
Since $n\ge 8$, at least one cycle
has length at least $4$, so, including one possible case
where there is a cycle of length at least $5$, we have cycle types
\begin{eqnarray*}
{[}(n\ m_2\ m_3\ m_4)\cdot\beta_2(m_5\  \cdots)(n\ m_2\ m_3\ m_4\ \cdots){]}&=&2,b_1-2,b_2,S\\
{[}(n\ m_4\ m_3\ m_2)\cdot\beta_2(m_5\  \cdots)(n\ m_2\ m_3\ m_4\ \cdots){]}&=&1,1,1,b_1-3,b_2,S\\
{[}(n\ m_2\ m_4\ m_3)\cdot\beta_2(m_5\  \cdots)(n\ m_2\ m_3\ m_4\ \cdots){]}&=&1,b_1-1,b_2,S\\
{[}(n\ m_4\ m_2\ m_3)\cdot\beta_2(m_5\  \cdots)(n\ m_2\ m_3\ m_4\ \cdots){]}&=&3,b_1-3,b_2,S\\
{[}(n\ m_2\ m_3\ m_5)\cdot\beta_2(m_5\  \cdots)(n\ m_2\ m_3\ m_4\ \cdots){]}&=&b_1+b_2,S\\
{[}(n\ m_5\ m_3\ m_2)\cdot\beta_2(m_5\  \cdots)(n\ m_2\ m_3\ m_4\ \cdots){]}&=&1,1,b_1+b_2-2,b_3,S\\
{[}(1\ m_5\ m_4\ m_2)\cdot\beta_2(m_5\  \cdots)(n\ m_2\ m_3\ m_4\ m_5\ \cdots){]}&=&1,1,2,b_1-4,b_2,S
\end{eqnarray*}
Note that in the case $b_1=4$, the third and fourth line of the above
table have the same cycle type, but altogether there are $5$ different
cycle types.
If $\beta$ consists of only one cycle, 
then $\beta$ is a cycle of length $n$, then take the first four line
and the last line of the above table to give five different possible
cycle types of $\alpha^\sigma\beta$.

These cases cover all the possibilities, and so the result follows.
\end{proof}

\begin{prop}
\label{prop:reducetothreecases}
Suppose that $\alpha,\beta\in S_n$ for $n\ge 9$.
Suppose $\eta(\alpha^{S_n}\beta^{S_n})\le 5$.  Then one of $\alpha,\beta$ is
conjugate to $(), (1\ 2)$, or $(1\ 2\ 3)$.
\end{prop}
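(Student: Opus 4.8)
The plan is to prove the contrapositive: assuming that neither $\alpha$ nor $\beta$ is conjugate to $()$, $(1\ 2)$, or $(1\ 2\ 3)$, I will show $\eta(\alpha^{S_n}\beta^{S_n})\ge 6$. By Lemma~\ref{lem:comparisonofetas} it suffices to prove $\eta'(\alpha,\beta)\ge 6$; and since $\eta'$ (Definition~\ref{defmodifiedeta}) and the property of being ``small'' both depend only on cycle type, I may replace $\alpha$ and $\beta$ independently by conjugates. In particular, as $\alpha,\beta\ne e$, I may relabel so that the hypotheses of Proposition~\ref{prop:inductionstep} hold ($2$ in a shortest nontrivial cycle of each, $\alpha(2)=1$, $\beta(1)=2$).

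Now run the descent of Proposition~\ref{prop:inductionstep}: it produces $\alpha',\beta'\in S_{n-1}$, obtained from $\alpha,\beta$ by shortening one longest cycle by one (a longest $2$-cycle simply vanishes), with $\eta'(\alpha,\beta)\ge\eta'(\alpha',\beta')$. Iterating, re-normalising after each step so the proposition applies again, gives a chain of pairs with nonincreasing $\eta'$ in $S_{n-1},S_{n-2},\dots$. Two elementary facts about this ``shorten a longest cycle'' operation on cycle types are needed: (i) the only way it sends a non-small permutation to a small one is $(1\ 2\ 3\ 4)\mapsto(1\ 2\ 3)$ or $(1\ 2)(3\ 4)\mapsto(1\ 2)$; and (ii) iterating it on any non-small permutation eventually reaches one of the two ``terminal'' types $(1\ 2\ 3\ 4)$ or $(1\ 2)(3\ 4)$ (single cycles reach the former, everything else the latter), because the total mass $\sum(\text{nontrivial cycle lengths})$ strictly decreases. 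So I follow the chain and stop at the first stage at which either (a) one of the two permutations is conjugate to $(1\ 2\ 3\ 4)$ or $(1\ 2)(3\ 4)$ — in which case, by (i) and (ii), the other is still non-small — or (b) we have descended to $S_8$ with both still non-small. A short argument shows that if one permutation reaches a terminal type at a stage living in $S_m$ with $m\ge 8$ then case (a) holds there, and otherwise we must pass through $S_8$ with both non-small, so one of (a), (b) always occurs.

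In case (b) I appeal to a finite verification (e.g.\ with GAP, in the spirit of the Appendix) that in $S_8$ every non-small pair has $\eta'\ge 6$. In case (a), say $\alpha^{(k)}$ is conjugate to $(1\ 2\ 3\ 4)$ or $(1\ 2)(3\ 4)$ in $S_m$, $m\ge 8$, with $\beta^{(k)}$ non-small. If $\alpha^{(k)}=(1\ 2\ 3\ 4)$, Lemma~\ref{lem:fourcyclemodifiedeta} already gives $\eta'(\alpha^{(k)},\beta^{(k)})\ge 5$, and I strengthen this to $\ge 6$: since $\beta^{(k)}$ is non-small it has a cycle of length $\ge 4$ or at least two nontrivial cycles, so one can choose the conjugating $\sigma$ (fixing $n$) so that the shifted $4$-cycle either splices two cycles of $\stand(\beta^{(k)})$ into one longer cycle or carves a long cycle out of one, producing a cycle type not among those listed in the proof of Lemma~\ref{lem:fourcyclemodifiedeta}. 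If $\alpha^{(k)}=(1\ 2)(3\ 4)$, I carry out the analogous enumeration: multiplying $\stand(\beta^{(k)})$ by a double transposition yields, among others, the identity or a $(2,2)$-type, a $4$-cycle, a $5$-cycle, a $(3,3)$-type, and two further types according to how the two transpositions meet the cycles of $\stand(\beta^{(k)})$ — at least six in all when $\beta^{(k)}$ is non-small. Either way $\eta'(\alpha^{(k)},\beta^{(k)})\ge 6$, hence $\eta'(\alpha,\beta)\ge 6$ and $\eta(\alpha^{S_n}\beta^{S_n})\ge 6$, contradicting $\eta\le 5$.

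The main obstacle is the last paragraph: getting from the ``four or five generically distinct cycle types'' of Lemma~\ref{lem:fourcyclemodifiedeta}'s argument (and its $(1\ 2)(3\ 4)$ analogue) up to a guaranteed six, uniformly in $n$ and in the cycle structure of the second permutation, with careful handling of the degenerate sub-cases (the second permutation having very short cycles, few cycles, or many fixed points) where several of the naively listed cycle types coincide. It is precisely the hypothesis that both permutations are non-small that supplies the extra cycle type; the rest — the reduction to $\eta'$ and the descent — is routine use of Lemma~\ref{lem:comparisonofetas} and Proposition~\ref{prop:inductionstep}.
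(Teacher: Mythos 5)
Your overall strategy is the same as the paper's: pass to $\eta'$ via Lemma~\ref{lem:comparisonofetas}, descend with Proposition~\ref{prop:inductionstep}, settle a base case by finite computation, and treat separately the exceptional types whose descendant becomes small. (Indeed you correctly observe that $(1\ 2)(3\ 4)$, with type $[1^{k},2,2]\mapsto[1^{k+1},2]$, is such a terminal type alongside $(1\ 2\ 3\ 4)$ --- a case the paper's own case list omits.) But there is a concrete false step: your base case. You descend to $S_8$ and assert that in $S_8$ every non-small pair has $\eta'\ge 6$. This is false, and no finite verification will rescue it: Table~\ref{table:eta'_for_evenS8} gives $\eta'\bigl((1\ 2)(3\ 4),(1\ 2)(3\ 4)(5\ 6)(7\ 8)\bigr)=5$, and Table~\ref{table:eta_for_S8} even gives $\eta=5$ for this pair, although neither permutation is conjugate to $()$, $(1\ 2)$ or $(1\ 2\ 3)$. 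This is precisely why the proposition is restricted to $n\ge 9$: it fails in $S_8$. Your descent must therefore stop at $S_9$, not $S_8$, with the finite check performed there.

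Second, the two enumeration lemmas on which your case (a) rests are asserted rather than proved. Lemma~\ref{lem:fourcyclemodifiedeta} gives only $\eta'\ge 5$; your strengthening to $\ge 6$ is a one-sentence heuristic (``one can choose $\sigma$ so that\dots''), and the $(1\ 2)(3\ 4)$ analogue --- six distinct cycle types of $\bigl(\stand((1\ 2)(3\ 4))^{-1}\bigr)^{\sigma}\stand(\beta)$ with $\sigma$ fixing $n$, uniformly over all non-small $\beta$ --- is not carried out at all; as you yourself note, the degenerate subcases where listed types coincide are exactly where the work lies. Note also that the proposition is only ever applied with $\eta<5$ (in the proof of Theorem~\ref{thm:maintheorem}), so under that reading of the hypothesis the existing $\ge 5$ bound of Lemma~\ref{lem:fourcyclemodifiedeta} already suffices for the $(1\ 2\ 3\ 4)$ case and your strengthening to $6$ can be dispensed with; but a bound of the same quality for $(1\ 2)(3\ 4)$ against an arbitrary non-small $\beta$ is still needed and still missing.
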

\begin{proof}
We may assume neither of $\alpha,\beta=e$.

We can check the result for $n=9$ with GAP.
See Table~\ref{table:eta_for_A9}
 in the Appendix. 

Suppose that $n>9$, then
by Proposition~\ref{prop:inductionstep}, we have, using the notation of
that lemma, 
$$\eta'(\alpha,\beta)\ge \eta'(\alpha',\beta').$$
If neither of $\alpha',\beta'$ is conjugate to
$(), (1\ 2), (1\ 2\ 3)$, then the result follows by induction.
The cases where $\alpha'$ has one of these forms are:
\begin{eqnarray*}
\alpha&=&(1\ 2)\\
\alpha&=&(1\ 2\ 3)\\
\alpha&=&(1\ 2\ 3\ 4),
\end{eqnarray*}
and similarly for $\beta$.
The first two cases are contained in the statement of the result.

Suppose that $\alpha=(1\ 2\ 3\ 4)$, and $\beta$ is not conjugate to 
$(), (1\ 2)$ or $(1\ 2\ 3)$.
Then by Lemma~\ref{lem:fourcyclemodifiedeta} 
to show that $\eta(\alpha^{S_n}\beta^{S_n})\ge 5$.
\end{proof}

\subsection{Cycle structures when $\eta\le 5$}

\begin{lemma}
\label{lem:beta_a_trans}
Suppose that $\beta$ is a transposition and 
$$[\alpha]=[
\underbrace{1,\cdots,1}_{n_0}, 
\underbrace{a_1,\cdots,a_1}_{n_1}, 
\underbrace{a_2,\cdots,a_2}_{n_2}, \ldots, 
\underbrace{a_r,\cdots,a_r}_{n_r}],$$
where $n_i\ge 1$ for $1\le i\le r$.  Let $r'=r$ if $n_0=0$ and
$r'=r+1$ otherwise.
Let $r''$ be the number of $n_i$, $0\le i\le r$, which are at least $2$.
Then
$$\eta(\alpha^{S_n}\beta^{S_n})=
\sum_{i=0}^r\left\lfloor\frac{a_i}{2}\right\rfloor
+\binom{r'}{2} + r''.$$
\end{lemma}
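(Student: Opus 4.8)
The plan is to compute $\eta(\alpha^{S_n}\beta^{S_n})$ by identifying it with the number of distinct cycle types among the products $\alpha\gamma$ as $\gamma$ ranges over all transpositions. Indeed, for $g,h\in S_n$ the element $(\alpha^g)(\beta^h)$ is conjugate to $\alpha\,\beta^{hg^{-1}}$, and as $g,h$ vary, $\beta^{hg^{-1}}$ runs over all conjugates of $\beta$, i.e.\ over all transpositions; since by Lemma~\ref{lemma1} conjugacy in $S_n$ is detected exactly by cycle type, $\eta(\alpha^{S_n}\beta^{S_n})$ is the number of distinct cycle types of the permutations $\alpha\gamma$ with $\gamma$ a transposition. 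Writing $\gamma=(x\ y)$, there are two cases: if $x,y$ lie in the same cycle of $\alpha$, of length $a$, then $\alpha\gamma$ is obtained from $\alpha$ by \emph{splitting} that cycle into two cycles whose lengths form an unordered pair $\{k,a-k\}$ with $1\le k\le\lfloor a/2\rfloor$; if $x,y$ lie in different cycles, of lengths $p$ and $q$ (possibly $1$), then $\alpha\gamma$ is obtained by \emph{merging} those two cycles into a single cycle of length $p+q$. A fixed point of $\alpha$ here counts as a cycle of length $1$; this is why the term $\sum_{i=0}^r\lfloor a_i/2\rfloor$, read with $a_0:=1$, correctly contributes nothing for fixed points.

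Next I would count the types arising in each case. Splitting a cycle of length $a_i$ produces exactly $\lfloor a_i/2\rfloor$ distinct unordered pairs $\{k,a_i-k\}$, and splitting any of the cycles of a fixed length yields the same set of outcomes, so splits account for $\sum_{i=0}^r\lfloor a_i/2\rfloor$. A merge can be performed on two cycles of distinct lengths exactly when both lengths occur in $[\alpha]$, giving $\binom{r'}{2}$ size-two sub-multisets of distinct parts (with $r'$ the number of distinct cycle lengths, counting the length $1$ when $n_0>0$), and on two cycles of equal length exactly when that length occurs with multiplicity at least $2$, giving $r''$ further sub-multisets of equal parts. Thus every split or merge operation is indexed either by a pair $(a_i,\{k,a_i-k\})$ or by a size-two sub-multiset $\{p,q\}$ of $[\alpha]$, and the total number of such indices is $\sum_{i=0}^r\lfloor a_i/2\rfloor+\binom{r'}{2}+r''$. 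It remains to show that distinct indices give distinct cycle types.

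For the distinctness: a split increases the number of parts of $[\alpha]$ by exactly one while a merge decreases it by exactly one, so no split type equals a merge type. Among merges, suppose merging $\{p,q\}$ and merging $\{p',q'\}$ give the same partition $\mu$. If $p+q=p'+q'$, then deleting that common part from $\mu$ recovers $[\alpha]$ with $\{p,q\}$ removed and also with $\{p',q'\}$ removed, forcing $\{p,q\}=\{p',q'\}$. If instead, say, $p+q<p'+q'=:s'$, compare the multiplicity of the part $s'$ on the two sides: on the merge-$\{p',q'\}$ side it exceeds its multiplicity in $[\alpha]$ by one (since $p',q'<s'$), while on the merge-$\{p,q\}$ side it is unchanged (since $p,q\le p+q<s'$), a contradiction. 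Among splits, suppose splitting an $a$-cycle as $\{k,a-k\}$ and an $a'$-cycle as $\{l,a'-l\}$ give the same partition; if $a=a'$ this forces the same unordered pair, and if $a>a'$, then on the split-$a'$ side the multiplicity of the part $a$ equals its multiplicity in $[\alpha]$ (because $l,a'-l<a'<a$), whereas on the split-$a$ side it is one less (because $k,a-k<a$ and one copy of $a$ was removed), again a contradiction. Hence the listed cycle types are pairwise distinct and $\eta(\alpha^{S_n}\beta^{S_n})=\sum_{i=0}^r\lfloor a_i/2\rfloor+\binom{r'}{2}+r''$.

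The main obstacle is precisely this distinctness bookkeeping — ruling out coincidences among split types, among merge types, and across the two families; the split/merge dichotomy itself is the elementary observation that multiplying by a transposition either cuts one cycle in two or fuses two cycles. The multiplicity-of-a-large-part argument sketched above is what makes the distinctness clean, and one should simply check that the degenerate configurations (e.g.\ $\alpha$ a single cycle, or $n_0\le 1$) cause no trouble, as there some of the three terms vanish but each step of the argument still applies verbatim.
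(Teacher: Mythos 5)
Your proof is correct and follows essentially the same route as the paper's: reduce to counting cycle types of $\alpha\gamma$ for $\gamma$ a transposition, use the split/merge dichotomy to get the three families contributing $\sum_i\lfloor a_i/2\rfloor$, $\binom{r'}{2}$ and $r''$, and then verify pairwise distinctness. Your distinctness check via the multiplicity of the largest affected part is a somewhat more uniform packaging of the paper's case-by-case multiset comparisons, but the argument is the same in substance.
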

\begin{proof}
When computing $\eta(\alpha^{S_n}\beta^{S_n})$, we may just compute
the number of possible cycle types in
$\alpha^{S_n}\beta^\tau$ for some fixed $\tau$, and so we may assume
$\beta=(1\ 2)$.
Suppose that for some $\sigma$ we have
 $\alpha^\sigma=c_1c_2\cdots c_m$, where 
$c_i$ are disjoint cycles, and 
$m=\sum_{i=0}^r n_i$.  We may assume that
$c_1$ involves $1$, and either $c_1$ or $c_2$ involves $2$.

In the first case, $c_1$ may be a cycle of length $a_1, a_2,\dots a_r$.
If $c_1$ has length $a_i$, then $[(1\ 2)c_1]=[s, a_i-s]$, 
where $s$ depends on sigma, and has possible values
$1\le s< a_i$, so there are $\lfloor\frac{a_i}{2}\rfloor$
 possibilities for
the set $\{s, a_i-s\}$.
Now $\alpha^\sigma(1\ 2)$ has type
$[\alpha^\sigma(1\ 2)]=
T(i,s)$ where
$$
T(i,s):=
\{
\underbrace{1,\cdots,1}_{n_0}, 
\underbrace{a_1,\cdots,a_1}_{n_1},\ldots,
\underbrace{a_i,\cdots,a_i}_{n_i-1},s,a_i-s,
\ldots, 
\underbrace{a_r,\cdots,a_r}_{n_r}\}.$$
 As $a_i$ varies, taking values $a_1, a_2,\dots, a_r$,
and as $s$ varies in the range 
$1\le s\le \lfloor\frac{a_i}{2}\rfloor$,
we obtain $\sum_{i=1}^r\left\lfloor\frac{a_i}{2}\right\rfloor$
different possible types, which we claim are all distinct from each other.
To see this, suppose that we have
$T(i,s_1)=T(j,s_2)$.  Then we have the following equality of multisets:
$$\{s_1,a_i-s_1,a_j\}
=
\{a_i,s_2,a_j-s_2\}.$$
We now use the fact that 
$1\le s_1\le \lfloor\frac{a_i}{2}\rfloor$ and
$1\le s_2\le \lfloor\frac{a_j}{2}\rfloor$.
So we can't have $s_1=a_i$, and we can't have $a_i-s_1=a_i$.
Suppose $s_1=s_2$.  Then $a_i-s_1=a_j-s_2$, so $a_i=a_j$.
Suppose that $s_1=a_j-s_2$.  Then $a_i-s_1=s_2$ and $a_j=a_i$,
and $s_1+s_2=a_i$, which, since 
$1\le s_1,s_2\le \lfloor\frac{a_i}{2}\rfloor$, is only possible if
$s_1=s_2$.
So, in all cases, $a_i=a_j$ and $s_1=s_2$.

Now suppose that $c_1$ invoves $1$ and $c_2$ involves $2$.
There are two cases.  Either $c_1$ and $c_2$ both have length $a_i$
for some $i$, which is only possible if $n_i>1$, or else
$c_1$ and $c_2$ have lengths $a_i$, $a_j$ with $i\not=j$.
In the first case, there are $r''$ possible cycle types, and
in the second case, $\binom{r'}{2}$ cycle types.
These cycle types are all distinct from the types where $c_1$ involves
both $1$ and $2$, since then $\alpha^\sigma(1\ 2)$ consisted of 
$m+1$ distinct cycles, whereas in these cases there are $m-1$
distinct cycles.
The cycles in these cases are all distinct from each other, since if not,
we would have one of the following equalities of multisets:
\begin{eqnarray*}
\{
a_j,a_j,2a_i 
\}
&=&
\{
a_i,a_i,2a_j 
\}
\\
\{
a_j, 2a_i
\}
&=&
\{
a_i,a_i+a_j\}
\\
\{a_j,a_k,2a_i
\}
&=&
\{a_i,a_i,a_j+a_k
\}
\\
\{a_j,a_i+a_k\}
&=&
\{a_k,a_i+a_j\}
\\
\{a_k,a_l,a_i+a_j\}
&=&
\{a_i,a_j,a_k+a_l\},
\end{eqnarray*}
where $i,j,k,l$ are distinct.  But a consideration of each case shows that
none of these are possible, and so all cases are distinct, and the result
follows.
\end{proof}

\begin{cora}
If $\beta$ is a transposition, 
$\alpha\not=e$, and $\eta(\alpha^{S_n}\beta^{S_n})\le 5$,
then $\alpha$ has cycle type one of the following:
\begin{eqnarray*}
\{n\} &&\text{ for } 2\le n\le 11\\
\{\underbrace{i,\dots,i}_m\} &&\text{ for } 2\le i\le 9, m\ge 2
\text{ where }n=mi\\
\{i,j\}&&\text{ for }1\le i<j\le 8, i\le 4 \text{ and }n=i+j\le 9\\
\{3,7\}&&\text{ and } n=10\\
\{\underbrace{i,\dots,i}_{m},j\}
&&\text{ for } n=im + j\le 8, i\le 3, m\ge 2
\text{ and }\{i,j\}\not=\{2,6\}
\\
\{\underbrace{1,\dots,1}_{m_1},\underbrace{j,\dots,j}_{m_2}\}
&&\text{ for }m_1, m_2\ge 2, m_1 + jm_2=n\le 6, 2\le j\le 5\\
\{\underbrace{2,\dots,2}_{m_1},\underbrace{3,\dots,3}_{m_2}\}
&&\text{ for } 2m_1 + 3m_2=n\\
\{1,2,3\}&& n=6
\end{eqnarray*}
($1$s are not omitted from the cycle type.)
\end{cora}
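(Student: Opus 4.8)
The plan is to derive everything from the formula in Lemma~\ref{lem:beta_a_trans}. Since $\lfloor 1/2\rfloor=0$, that formula says
$$
\eta(\alpha^{S_n}\beta^{S_n})=\sum_{i=1}^r\left\lfloor\frac{a_i}{2}\right\rfloor+\binom{r'}{2}+r'',
$$
a sum of three nonnegative terms depending only on the cycle type of $\alpha$. The first observation is that $\binom{r'}{2}\le\eta\le5$ gives $r'\le3$, and $\alpha\ne e$ gives $r\ge1$, so $r'\in\{1,2,3\}$. The second is that $\eta$ is unchanged when a multiplicity $n_i$ (or $n_0$) that is already $\ge2$ is replaced by any other value $\ge2$; hence $\eta$ depends only on the \emph{shape} of $\alpha$ --- the set of distinct nontrivial cycle lengths, each flagged by whether it occurs once or at least twice, together with the number of fixed points recorded only as $0$, $1$, or $\ge2$. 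So I would enumerate the finitely many shapes with $\eta\le5$ and then read each one off as a family of cycle types via $n=n_0+\sum_i n_ia_i$.

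I would handle the three values of $r'$ in turn. For $r'=1$ there are no fixed points and one cycle length $a_1\ge2$: if it occurs once, $\eta=\lfloor a_1/2\rfloor\le5$ forces $a_1=n\le11$ (the type $\{n\}$); if it occurs at least twice, $r''=1$ and $\eta=\lfloor a_1/2\rfloor+1\le5$ forces $a_1\le9$ (the types $\{i,\dots,i\}$, $2\le i\le9$). For $r'=3$ we have $\binom{r'}{2}=3$, so $\sum\lfloor a_i/2\rfloor+r''\le2$; three distinct nontrivial lengths already give $\sum\lfloor a_i/2\rfloor\ge1+1+2$, so there must be exactly two nontrivial lengths $a_1<a_2$ plus fixed points, with $\lfloor a_1/2\rfloor+\lfloor a_2/2\rfloor\le2$ and $r''=0$ --- which pins everything down to $a_1=2$, $a_2=3$ and a single fixed point, i.e.\ the one type $\{1,2,3\}$ at $n=6$.

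The work concentrates in the case $r'=2$, where $\binom{r'}{2}=1$ and so $\sum\lfloor a_i/2\rfloor+r''\le4$, with either no fixed points and two distinct nontrivial lengths, or fixed points and one nontrivial length. Here I would list the finitely many shapes obeying that inequality, and for each compute the admissible range of $n=n_0+\sum_i n_ia_i$; this yields the families $\{i,j\}$ (with the exceptional value $\{3,7\}$ at $n=10$), $\{i,\dots,i,j\}$, $\{1,\dots,1,j,\dots,j\}$ and $\{2,\dots,2,3,\dots,3\}$, with the stated bounds on $n$ and the exclusions (such as $\{i,j\}\ne\{2,6\}$) coming straight from the inequality once a repeated length is present. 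I do not expect any single subcase to be hard --- each reduces to checking a small inequality --- so the only genuine obstacle is organizing the $r'=2$ analysis so that it is visibly exhaustive, for which a table of shapes is probably cleaner than running prose.
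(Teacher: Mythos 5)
Your plan is, in outline, the paper's own proof: apply the closed formula of Lemma~\ref{lem:beta_a_trans}, bound $r'$ by $3$ via the binomial term, and split into the cases $r'=1,2,3$; the paper is exactly as terse as you are on the $r'=2$ enumeration (it simply asserts ``this gives the next $5$ cases''). Your treatments of $r'=1$ and $r'=3$ are complete and match the paper's.

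The difficulty is in the $r'=2$ case, and it is not that your method is wrong but that your final claim --- that the enumeration ``yields the stated bounds on $n$ and the exclusions coming straight from the inequality'' --- is false; indeed it is contradicted by your own (correct) observation that $\eta$ depends only on the \emph{shape} of $[\alpha]$, i.e.\ is unchanged when a multiplicity already $\ge 2$ is increased. That invariance means no family in the answer can carry a genuine upper bound on $n$ once a part is repeated, yet the stated list caps $\{1,\dots,1,j,\dots,j\}$ at $n\le 6$ and $\{i,\dots,i,j\}$ at $n\le 8$. Concretely, $[1,1,1,2,2]$ in $S_7$ has $\eta=0+1+1+2=4$ (confirmed by Table~\ref{table:eta_for_S7}), $[1,2,2,2,2]$ in $S_9$ has $\eta=3$ and $[1,1,7]$ in $S_9$ has $\eta=5$ (both confirmed by Table~\ref{table:eta_for_S9}), and $[1,9]$ in $S_{10}$ has $\eta=0+4+1=5$; none of these appears in the stated list. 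So a faithful execution of your plan produces a correct classification that is strictly larger than the one in the statement: the corollary as printed is in error, and your proposal would break down only at the last step, where you assert agreement with it instead of reporting the discrepancy. (Since the main theorem only uses the companion corollary for $\beta$ a $3$-cycle, this does not propagate, but your write-up should either correct the list or flag the mismatch rather than paper over it.)
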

\begin{proof}
By Lemma~\ref{lem:beta_a_trans}, we must have
$r'\le 3$.  

Suppose $r'=1$.  Then all cycles have the same length $a_1\ge 2$.
If $n_1=1$, then $r''=0$, and 
$\eta=\lfloor a_1/2\rfloor$, so $2\le a_1\le 11$.
This gives the first case in the above list.
If $n_1\ge2$, $r''=1$, and
$\eta=\lfloor a_1/2\rfloor+1$, so $2\le a_1\le 9$.  This gives the
second line.

Suppose $r'=2$, so there are two distinct cycle lengths, say
$a_i, a_j$, possibly $a_i=1$.
$\eta=\lfloor a_i/2\rfloor + 
\lfloor a_j/2\rfloor + 1 + r''$.
This gives the next $5$ cases.

If $r'=3$, the only possible case is the last one listed.
\end{proof}

\begin{lemma}
\label{lem:beta_a_3cycle}
Suppose that $\beta$ is a three cycle, and
$$[\alpha]=[
\underbrace{1,\cdots,1}_{n_0}, 
\underbrace{a_1,\cdots,a_1}_{n_1}, 
\underbrace{a_2,\cdots,a_2}_{n_2}, \ldots, 
\underbrace{a_r,\cdots,a_r}_{n_r}],$$
where $n_i\ge 1$ for $1\le i\le r$.  Let $r'$ and $r''$ be as in
Lemma~\ref{lem:beta_a_trans}.
Let $r'''$ be the number of $n_i$, $0\le i\le r$, which are at least $3$.
Then
\begin{eqnarray*}
\eta(\alpha^{S_n}\beta^{S_n})&\le& 
\sum_{i=0}^r P_3(a_i)
+\binom{r'}{3} + r''' + (r'-1)\sum\lfloor a_i/2\rfloor + s_3 \\
&&+\sum_{1\le i\le r,a_i\ge 2,n_i\ge 2} (a_i-1),
\end{eqnarray*}
where $P_3(a_i)$ is the number of partitions of $a_i$ into three parts,
and $s_3=1$ if any $a_i\ge 3$, and $0$ otherwise.
\end{lemma}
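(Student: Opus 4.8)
The plan is to reduce, exactly as in Lemma~\ref{lem:beta_a_trans}, to the case $\beta=(1\ 2\ 3)$, so that $\eta(\alpha^{S_n}\beta^{S_n})$ equals the number of distinct cycle types of $\pi(1\ 2\ 3)$ as $\pi$ runs over all permutations of type $[\alpha]$. Writing $\pi=c_1c_2\cdots c_m$ as a product of disjoint cycles, fixed points included (so $m=\sum_{i=0}^r n_i$), I would classify according to how the three points $1,2,3$ are distributed among the $c_k$: (A) all three lie in one cycle; (B) two lie in one cycle and the third in another; (C) the three points lie in three distinct cycles. In each case the resulting type depends only on the lengths of the one, two, or three cycles involved and on the positions of the marked points inside them, and relabelling within $\{1,2,3\}$ (which merely replaces $(1\ 2\ 3)$ by its inverse) changes nothing.

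The effect of multiplying by $(1\ 2\ 3)$ in each case is read off from the factorization $(1\ 2\ 3)=(1\ 2)(1\ 3)$ together with the standard fact that multiplying by a transposition merges two cycles when the transposed points lie in different cycles and splits one cycle in two when they lie in the same cycle. In Case~A the cycle of length $a_i$ containing $1,2,3$ either returns a single $a_i$-cycle, leaving the type equal to $[\alpha]$ itself (this is the $s_3$ term), or breaks into three cycles whose lengths partition $a_i$; summing over $i$ gives the $\sum_{i=0}^r P_3(a_i)$ contribution, where it is harmless to count partitions of $a_i$ into at most three parts. In Case~B the cycle of length $a_i$ holding two marked points splits as $p+(a_i-p)$ and one piece merges with the cycle of length $a_j$ holding the third point; when $i\neq j$ the number of resulting length-pairs is at most $\lfloor a_i/2\rfloor+\lfloor a_j/2\rfloor$, and summing over unordered pairs of distinct lengths yields $(r'-1)\sum\lfloor a_i/2\rfloor$, while when $i=j$ (forcing $n_i\geq 2$, $a_i\geq 2$) one gets exactly $a_i-1$ new types, producing the last sum. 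In Case~C all three cycles amalgamate into a single cycle of length $a_i+a_j+a_k$, and counting the distinct such lengths according to whether the three cycle-lengths are all distinct, two equal, or all equal accounts for the $\binom{r'}{3}$ and $r'''$ terms (with any remaining count absorbed into the slack above).

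The final step is to add the per-case counts, and since the statement is an inequality I would bound each case crudely, using the Case~A over-count (partitions into at most three parts, even though two-part partitions never arise) to absorb the small genuine over-counts in Case~C. The main obstacle is precisely this bookkeeping: the same cycle type can often be produced in several of the cases at once, and one must handle carefully the configurations in which a fixed point of $\pi$ is among $1,2,3$ and those in which two of the relevant cycles have equal length — which is why the hypotheses "$n_i\geq 2$", "$a_i\geq 2$" and the quantities $r',r'',r'''$ enter. I expect the delicate point to be verifying that, across all overlapping special cases, the total number of distinct types still does not exceed the right-hand side; this mirrors, but is appreciably more intricate than, the corresponding verification in Lemma~\ref{lem:beta_a_trans}.
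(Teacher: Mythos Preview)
Your proposal attacks the inequality in the wrong direction. Although the displayed statement is printed with ``$\le$'', this is a typo: the paper's own proof establishes the \emph{lower} bound $\eta(\alpha^{S_n}\beta^{S_n})\ge R(\alpha)$ (where $R(\alpha)$ denotes the right-hand side), and that is exactly what the subsequent corollary requires, since it deduces $R(\alpha)\le 5$ from the hypothesis $\eta\le 5$. The paper's argument proceeds by \emph{exhibiting}, in each of your Cases A, B, C, specific configurations of $\alpha^\sigma$ and checking that the resulting cycle types of $\alpha^\sigma(1\ 2\ 3)$ are pairwise distinct; it then explicitly remarks that ``there are a number of other cases, but for the inequality, we do not need to consider them.'' That sentence is only coherent for a lower bound.

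Your plan, by contrast, is to enumerate every case and show the total number of types does not exceed $R(\alpha)$. Besides being aimed at the wrong inequality, several of your per-case counts are already too small to serve as upper bounds. In Case~B with two marked points in a cycle of length $a_i$ and the third in a cycle of a different length $a_j$, the product $\alpha^\sigma(1\ 2\ 3)$ replaces these two cycles by a pair of lengths $\{q,\,a_i+a_j-q\}$, and as $\sigma$ varies $q$ ranges over essentially $1,\dots,\max(a_i,a_j)-1$; this gives on the order of $\lfloor (a_i+a_j)/2\rfloor$ types for each unordered pair $\{i,j\}$, generally more than the $\lfloor a_i/2\rfloor+\lfloor a_j/2\rfloor$ you allot. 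The ``absorb the over-counts into the Case~A slack'' idea therefore cannot close the gap. The fix is to reverse the goal: for each summand on the right, write down an explicit family of conjugates of $\alpha$ whose products with $(1\ 2\ 3)$ have pairwise distinct cycle types, and verify distinctness across families as in Lemma~\ref{lem:beta_a_trans}; the cases you omit then work in your favour rather than against you.
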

\begin{proof}
We may assume that $\beta=(1\ 2\ 3)$.
There are a number of possible cases, a few of which are:
$1,2,3$ are all involved in the same cycle $c$, of length $a_i$, 
of $\alpha^\sigma$,
in which case $[c\beta]$ can, amongst other possibilities,
be any partition of $a_i$ into $3$ parts. This gives the term $P_3(a_i)$.
Another possibility in this case is that $[c\beta]=\{a_i\}$.
This gives the term $s_3$.

Another possibility is that each of $1$, $2$, $3$ are in different cycles,
of different lengths.  This gives the term
$\binom{r'}{3}$ in the
sum.  A third case is when $1$, $2$, $3$ are all in different cycles, but
these cycles have the same length.  This gives the term $r'''$.

Another possible case is
$1, 2, 3$ are all in different cycles, and these have lengths
$a_i,a_i,a_j$ for $a_i\not=a_j$.  This gives the term
$(r'-1)\sum\lfloor a_i/2\rfloor$.

If $1$ and $2$ are both in the same cycle $c_1$, and $3$ is in a different
cycle, $c_2$, but $c_1$ and $c_2$ both have length $a_i$, then the type
of $c_1c_2(1\ 2\ 3)$ is $\{a_i+u,v\}$, where $u+v=a_i$, $u,v\ge 2$.
The term $\sum (a_i-1)$
comes from this case.

These cases can all shown to have distinct cycle type, in a similar
manner to the proof of Lemma~\ref{lem:beta_a_trans}.
There are a number of other cases, but for the inequality, we do not
need to consider them.
\end{proof}
\begin{remark}
\label{rem:partionsterms}
The first few terms in the increasing
sequence $P_3(a_i)$, for $i=1,2,3,\dots$ 
are $0, 0, 1, 1, 2, 3, 4, 5, 7.$
This is Sloane's sequence A069905 \cite{sloane}, where more terms and
a formula can be found.
\end{remark}
\begin{cora}
\label{cor:possible_eta_for_beta_3_cycle}
With $\beta$ a three cycle and $\alpha\in S_n\setminus\{e\}$, if
$n> 8$ and
$\eta(\alpha^{S_n}\beta^{S_n})\le 5$, then $\alpha$ has one of the
following cycle types:
$$
\{\underbrace{2,\dots,2}_{n_1}\},
\{\underbrace{3,\dots,3}_{n_1}\},
\{\underbrace{1,\dots,1}_{n_0}, 2\},
\{\underbrace{1,\dots,1}_{n_0}, 2,2\},
\{\underbrace{1,\dots,1}_{n_0}, 3\},
\{\underbrace{1,\dots,1}_{n_0}, 4\},
$$
where $n_0,n_1$ are integers, $n_1\ge 3$, $n_0\ge 1$.
In these cases, the values of 
$\eta(\alpha^{S_n}\beta^{S_n})$ are as follows respectively:
$2,4, 3,5,5,5$.
\end{cora}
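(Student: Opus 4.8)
The plan is to use Lemma~\ref{lem:beta_a_3cycle} as a crude lower-bound filter to knock out all but finitely many cycle types, and then verify the surviving cases by direct computation. Write $[\alpha]$ in the standard form as in the statement, and recall from the lemma that
$$\eta(\alpha^{S_n}\beta^{S_n})\ \ge\ \sum_{i=0}^r P_3(a_i)+\binom{r'}{3}+r'''+(r'-1)\sum_i\lfloor a_i/2\rfloor+s_3+\!\!\sum_{\substack{1\le i\le r\\ a_i\ge 2,\ n_i\ge 2}}\!\!(a_i-1).$$
Wait --- the lemma gives an \emph{upper} bound on $\eta$, not a lower bound, so I cannot simply use it to discard cases. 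The correct reading is that each displayed summand corresponds to a family of genuinely occurring, pairwise-distinct cycle types of $\alpha^\sigma\beta$, so \emph{any sub-collection of these summands is itself a lower bound} for $\eta(\alpha^{S_n}\beta^{S_n})$. This is what I will exploit: I will select, case by case, enough of these terms to force $\eta\ge 6$ unless $[\alpha]$ is one of the six listed types.

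First I would bound the number of distinct cycle lengths. If $r'\ge 3$ then $\binom{r'}{3}\ge 1$ and $(r'-1)\sum\lfloor a_i/2\rfloor\ge 2\cdot 1=2$ already, and with even modest cycle lengths the remaining nonnegative terms push the total past $5$; a short check (using $n>8$, so $\alpha$ cannot be too small) eliminates $r'\ge 3$ entirely. So $r'\le 2$. Next, suppose some $a_i\ge 5$: then $P_3(a_i)\ge 2$ and $s_3=1$, and if additionally $\alpha$ has any other cycle (which it must, since $n>8>a_i$ would require a second cycle, or $a_i=n$ but then $\binom{a_i}{3}$-type and transposition-type splittings contribute plenty) one gets $\eta\ge 6$; the case $\alpha=(1\ 2\cdots n)$ a single long cycle is handled by noting $P_3(n)+s_3\ge 7+1$ for $n\ge 9$. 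Hence every cycle of $\alpha$ has length $\le 4$. Then I work through the small menu of possibilities with cycle lengths in $\{1,2,3,4\}$ and $r'\le 2$: a $4$-cycle present forces (via $P_3(4)=1$, $s_3=1$, plus the $\sum(a_i-1)$ and $(r'-1)\sum\lfloor a_i/2\rfloor$ terms) that there be no other nontrivial cycle and no repeated $4$, leaving $\{1,\dots,1,4\}$; having $n_i\ge 2$ for a cycle of length $3$ contributes $a_i-1=2$ and $r'''$ kicks in once $n_i\ge 3$, ruling out $\{3,3,3,\dots\}$-with-extra-cycles and $\{1,\dots,1,3,3\}$, etc.; similarly repeated $2$-cycles beyond two, or a $2$ together with a $3$, overshoot. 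What remains after this bookkeeping is exactly the six types listed (with the stated constraints $n_1\ge 3$, $n_0\ge 1$, and $n>8$ forcing enough fixed points or enough repetitions).

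Finally, for each of the six surviving types I compute $\eta(\alpha^{S_n}\beta^{S_n})$ exactly. For $\beta=(1\ 2\ 3)$ and $\alpha$ a product of $2$-cycles, of $3$-cycles, or of $1$-cycles with one short cycle attached, the product $\alpha^\sigma\beta$ is determined up to conjugacy by how $\{1,2,3\}$ meets the support of $\alpha^\sigma$ and the relevant cycle lengths, so one enumerates the finitely many combinatorial configurations directly; this yields the asserted values $2,4,3,5,5,5$ respectively. (These are stable in $n$ because the ``extra'' fixed points $n_0$ or extra cycles $n_1$ beyond the minimum just come along for the ride and do not create new cycle types.) The main obstacle is the second paragraph: the inequality in Lemma~\ref{lem:beta_a_3cycle} runs the ``wrong way'' for elimination, so the real work is in each case choosing a concrete, provably-distinct sub-family of cycle types of $\alpha^\sigma\beta$ large enough to certify $\eta\ge 6$ --- essentially redoing, in each borderline case, a miniature version of the distinctness argument of Lemma~\ref{lem:beta_a_trans}. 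Care is also needed at the boundary $n=9$, which may have to be checked against Table~\ref{table:eta_for_A9}.
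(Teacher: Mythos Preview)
Your approach is essentially the paper's: treat the summands in Lemma~\ref{lem:beta_a_3cycle} as counts of genuinely distinct cycle types (hence a \emph{lower} bound on $\eta$), filter out all $[\alpha]$ for which this bound exceeds $5$, and then verify the survivors by direct enumeration---which is exactly what the paper does, except that it carries out the filtering step by a short computer search (the PARI code is even left in the source as a comment) rather than the by-hand casework you sketch. You are right that the inequality in Lemma~\ref{lem:beta_a_3cycle} is stated the wrong way round for this purpose; the paper's own proof of the corollary uses it as a lower bound, so your reading is the intended one.
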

\begin{proof}
We use the inequality in Lemma~\ref{lem:beta_a_3cycle}.
By Remark~\ref{rem:partionsterms}, 
we must have $a_i\le 8$.
We must also have $r'\le 4$, $r''\le 5$.
Let $R(\alpha)$ be the expression on the right of the inequality in
Lemma~\ref{lem:beta_a_3cycle}.
First suppose that all $n_i$ are $1$ or $0$.
A computer search for $\alpha$ with types
satisfying $R(\alpha)\le 5$ gives the following possible cases for
$[\alpha]$:
$$
\{1,3,3\},\{1,1,3,3\}
\{4\},\{4,4\},
\{5\},\{6\},\{7\},
\{1, 5\},\{1,1,5\},
\{2, 3\},\{2,2,3\}
\{2, 4\},
$$
$$
\{\underbrace{2,\dots,2}_{n_1}\},
\{\underbrace{3,\dots,3}_{n_1}\},
\{\underbrace{1,\dots,1}_{n_0}, \underbrace{2,\dots,2}_{n_1}\},
\{\underbrace{1,\dots,1}_{n_0}, 3\},
\{\underbrace{1,\dots,1}_{n_0}, 4\},
$$
where $n_0,n_1$ are arbitrary positive integers.
These have been listed so that only those in the second line satisfy $n>8$.

To obtain the final result, we go
through the cases with $n>8$ one at a time.
It is simple to see that representatives of all possible
conjucacy classes of the product of $\alpha^{S_n}\beta^{S_n}$ in these
cases are given as follows:

For $[\alpha]=\{\underbrace{2,\dots,2}_{n_1}\}$:
\begin{eqnarray*}
(1\ 2\ 3)(1\ 2)(3\ 4)(5\ 6)\alpha_1&=&(2\ 4\ 3)(5\ 6) \alpha_1\\
(1\ 3\ 5)(1\ 2)(3\ 4)(5\ 6)\alpha_1&=&(1\ 4\ 3\ 6\ 5\ 2)\alpha_1\\
\end{eqnarray*}
where $\alpha_1$ is a product of disjoint transpositions,
acting on $7,\dots,n$.

For $[\alpha]=
\{\underbrace{3,\dots,3}_{n_1}\}:
$
\begin{eqnarray*}
(1\ 2\ 3)(1\ 2\ 3)(4\ 5\ 6)(7\ 8\ 9)\alpha_1&=&(1\ 3\ 2)(4\ 5\ 6)(7\ 8\ 9)\alpha_1\\
(1\ 3\ 2)(1\ 2\ 3)(4\ 5\ 6)(7\ 8\ 9)\alpha_1&=&(1)(2)(3)(4\ 5\ 6)(7\ 8\ 9)\alpha_1\\
(1\ 2\ 4)(1\ 2\ 3)(4\ 5\ 6)(7\ 8\ 9)\alpha_1&=&(1\ 3)(2\ 5\ 6\ 4)(7\ 8\ 9)\alpha_1\\
(1\ 4\ 5)(1\ 2\ 3)(4\ 5\ 6)(7\ 8\ 9)\alpha_1&=&(4\ 6)(1\ 5\ 2\ 3)(7\ 8\ 9)\alpha_1\\
(1\ 3\ 4)(1\ 2\ 3)(4\ 5\ 6)(7\ 8\ 9)\alpha_1&=&(1)(2\ 3\ 5\ 6\ 4)(7\ 8\ 9)\alpha_1
\end{eqnarray*}
where $\alpha_1$ is a product of disjoint three cycles
acting on $10,\dots,n$

For $[\alpha]=
\{\underbrace{1,\dots,1}_{n_0},\underbrace{2,\dots,2}_{n_1}\}$:
\begin{eqnarray*}
(1\ 2\ 3)(4\ 5)\alpha_1&=&            (1\ 2\ 3)(4\ 5)\alpha_1 \\
(1\ 2\ 3)(3\ 4)\alpha_2&=&            (1\ 2\ 4\ 3)	  \alpha_2 \\
(1\ 2\ 3)(2\ 3)\alpha_3&=&            (1\ 3)	  \alpha_3 \\
(1\ 2\ 3)(2\ 4)(3\ 5)\alpha_1&=&      (1\ 4\ 2\ 5\ 3)  \alpha_1\\
(1\ 2\ 3)(1\ 2)(3\ 4)\alpha_2&=&      (2\ 4\ 3)	   \alpha_2\\
(1\ 2\ 3)(1\ 4)(2\ 5)(3\ 6)\alpha_0&=&(1\ 5\ 2\ 6\ 3\ 4)\alpha_0
\end{eqnarray*}
where $\alpha_0,\alpha_1,\alpha_2,\alpha_3$ have order $2$, 
and $\alpha_i$ fixes $1,2,\dots,6-i$.  The cases where
$\alpha$ permutes either $2$, $4$ or more than $6$ elements need
to be distinguished from each other to determine the exact value
of $\eta$ in each case.  For $\alpha$ a transposition, only
the first three possibilities occur.  For $\alpha$ a product of two
disjoint transpositions, the first $5$ cases occur.
If $\alpha$ is a product of three or more disjoint transpositions,
then all $6$ cases occur.

For $[\alpha]=\{\underbrace{1,\dots,1}_{n_0}, 3\}$:
\begin{eqnarray*}
(1\ 2\ 3)(4\ 5\ 6)&=&(1\ 2\ 3)(4\ 5\ 6)	\\
(1\ 2\ 3)(3\ 4\ 5)&=&(1\ 2\ 4\ 5\ 3)	\\
(1\ 2\ 3)(2\ 3\ 4)&=&(1\ 3)(2\ 4)	\\
(1\ 2\ 3)(3\ 2\ 4)&=&(1\ 4\ 3)	\\
(1\ 2\ 3)(1\ 2\ 3)&=&(1\ 3\ 2)	\\
(1\ 2\ 3)(1\ 3\ 2)&=&()
\end{eqnarray*}

For $[\alpha]=\{\underbrace{1,\dots,1}_{n_0}, 4\}$:
\begin{eqnarray*}
(1\ 2\ 3)(4\ 5\ 6\ 7)&=&(1\ 2\ 3)(4\ 5\ 6\ 7)  \\
(1\ 2\ 3)(3\ 4\ 5\ 6)&=&(1\ 2\ 4\ 5\ 6\ 3)	   \\
(1\ 2\ 3)(2\ 3\ 4\ 5)&=&(1\ 3)(2\ 4\ 5)	   \\
(1\ 2\ 3)(3\ 4\ 2\ 5)&=&(2\ 4)(1\ 5\ 3)	   \\
(1\ 2\ 3)(3\ 2\ 4\ 5)&=&(1\ 4\ 5\ 3)	   \\
(1\ 2\ 3)(1\ 2\ 3\ 4)&=&(1\ 3\ 2\ 4)	   \\
(1\ 2\ 3)(1\ 3\ 2\ 4)&=&(1\ 4)     
\end{eqnarray*}
\end{proof}

\comment{
pari commands, used in getting data for corollary:
ff(i,j,k)=(max(max(i,j),k)>2)
ff(i,j,k,l)=(max(max(max(i,j),k),l)>2)
gg(i)=if(i==0,0,i+1)
pp(a)=round(a^2/12)  \\ this is P_3(a)

\\ the following search assume there are at most 4 a_i's,
\\ and assumes the n_is are zero.
\\ the result shows there are at most 2 a_is.

\\rp is r prime
\\i,j,k,l are values of the a_i; if any are zero, omit them.
\\ni's can be 1,2, or >=3.  ``3'' counts as >=3

{
for(i=0,8,for(j=gg(i),8,for(k=gg(j),8,for(l=gg(k),8,
if(l==0,rp=0;as=[],
if(k==0,rp=1;as=[l],
if(j==0,rp=2;as=[k,l],if(i==0,rp=3;as=[j,k,l],rp=4;as=[i,j,k,l]))));
if(rp==0,print(0));
tot=binomial(rp,3) + ff(i,j,k,l);
for(ii=1,length(as),tot=tot+pp(as[ii]));
tot=tot+(rp-1)*sum(ii=1,length(as),floor(as[ii]/2));
if(tot<=5,print([i,j,k,l])
)))))
}

\\ here we assume there are 2 a_is, and work out the possible n_i
{
for(i=1,8,for(j=i+1,8,
tot=pp(i) + pp(j) + binomial(2,3) + ff(0,0,i,j) + floor(i/2) + floor(j/2);
for(n0=1,3,for(n1=1,3,
s1=0;s2=0;
if(n0==3,s1=1);if(n1==3,s2=1);rrr=s1+s2;
r1=0;r2=0;
if(n0>=2,r1=i-1);if(n1>=2,r2=j-1);
if(tot+rrr+r1+r2<=5,print([i,j,n0,n1]))
))))
}

\\ here we assume there is 1 a_i, and work out the possible n_i
{
for(i=1,8,
tot=pp(i) + binomial(1,3) + (i>2);
for(n0=1,3,
rrr=0;if(n0==3,rrr=1);
r1=0;if(n0>=2,r1=i-1);
if(tot+rrr+r1<=5,print([i,n0]))
))
}
}

\begin{proof}[Proof Of Theorem~\ref{thm:maintheorem}]
The minimum value of $\eta$ in $A_n$ is at most
$5$, since we always have $\eta((1\ 2\ 3)^{A_n}(1\ 2\ 3)^{A_n})=5$.

For $6 \leq n \leq 8$, the result follows from direct computation by GAP \cite{GAP4}, as 
can be seen in the tables in the Appendix. 

Suppose $n \geq 9$.  From
Proposition~\ref{prop:reducefromAntoSn}, 
Proposition~\ref{prop:reducetothreecases}, and 
Corollary~\ref{cor:possible_eta_for_beta_3_cycle}, 
we see that the only possible cases with 
$\eta(\alpha^{A_n}\beta^{A_n})<5$ for
$\alpha,\beta\in A_n\setminus\{e\}$ are those given in the
statement of the theorem.
\end{proof}

\section{Appendix}
\label{appendix}

In this Appendix, we give tables of $\eta$ and $\eta'$ for $A_n$ and $S_n$ for 
$3\le n\le 9$, as well as a table of minimal values of $\eta$ up to $n=12$.
first we describe the GAP code used to produce these tables, which then follow.


\subsection{GAP code}

Given two elements $i$ and $j$ of a group $G$, the following code computes up to $n$ representatives of the classes appearing in the set $j^G i^G$.  Note that it suffices to only check products of the form $b i$, for $b \in j^G$, since any product $j^h i^g$ is conjugate to $j^{h g^{-1}} i$.
    
\begin{figure}[htpb]
\begin{verbatim}

conjproduct:=function(G,i,j,n)
  local b,c,cj,k,rl,rlen,ok;
  rl:=[];
  rlen:=0;
  cj:=ConjugacyClass(G,j);
  for b in cj do
    if rlen=n then break; fi;														#if found enough, stop
    ok:=true;
    k:=1;
    c:=i*b;
    while ok and k<=rlen do															#compare against found classes
      if IsConjugate(G,rl[k][2],c) then ok:=false; fi;	
      k:=k+1;
    od;
    if ok then																					#if new class, add to list
      Add(rl,[b,c]); 
      rlen:=rlen+1;
    fi;
  od;
  return rl;
end;

\end{verbatim}
\end{figure}

Suppose that $G$ has $k$ conjugacy classes, $cl_1, \ldots, cl_k$, and suppose that $n = m$, if $m \geq 1$, and otherwise, $n = k$. (For large $n$, it can take quite a bit of time to compute all values of $\eta$ for $S_n$ and $A_n$:  we can save time, if desired, by setting an upper bound $m$ for $\eta$.)  The following code then uses the previous code to generate 
\begin{itemize}
\item up to $n$ representatives of the classes appearing in the set $cl_j cl_i$, for each pair $j \leq i$, and
\item pairs $g,h$ of representatives, listed by $\min(\eta(h^G g^G),n)$.
\end{itemize}

\begin{figure}[htpb]
\begin{verbatim}

allconjproducts:=function(G,outputfile,m)

  local cl,ch,g,h,i,j,k,n,clist,clen,num,pair;

  cl:=ConjugacyClasses(G);
  k:=Length(cl);
  if m<1 then n:=k; else n:=m; fi;											#find at most n classes
  num:=[];
  for i in [1..n] do
    Add(num,[]);
  od;

  AppendTo(outputfile,"\r\n\r\n",G,"\r\n\r\n");
  for i in [1..k] do
    g := Representative(cl[i]);
    for j in [1..i] do
      h:=Representative(cl[j]);
      if h=() then																			#trivial h means eta = 1
        AppendTo(outputfile,"[",g,"]*[",h,"]: 1 \r\n",
                "[ ",[h,g]," ] \r\n\r\n");
        Add(num[1],[g,h]);
      else
        clist:=conjproduct(G,g,h,n);
        clen:=Length(clist);
        AppendTo(outputfile,"[",g,"]*[",h,"]: ",clen,"\r\n",
                clist,"\r\n\r\n");
        Add(num[clen],[g,h]);
      fi;
    od;
  od;

  for i in [1..n] do																		#construct list of pairs, ordered by eta
    AppendTo(outputfile,"\r\n eta=",i,":\r\n");
    for pair in num[i] do
      AppendTo(outputfile,pair,"\r\n");
    od;
  od;
  
  return;
  
end;

\end{verbatim}
\end{figure}

\eject

For example, 
\begin{verbatim} allconjproducts(AlternatingGroup(3),"A3.txt",0); \end{verbatim}
yields a text file with the following contents:

\begin{figure}[htpb]
\begin{verbatim}

AlternatingGroup( [ 1 .. 3 ] )

[()]*[()]: 1 
[ [ (), () ] ] 

[(1,2,3)]*[()]: 1 
[ [ (), (1,2,3) ] ] 

[(1,2,3)]*[(1,2,3)]: 1
[ [ (1,2,3), (1,3,2) ] ]

[(1,3,2)]*[()]: 1 
[ [ (), (1,3,2) ] ] 

[(1,3,2)]*[(1,2,3)]: 1
[ [ (1,2,3), () ] ]

[(1,3,2)]*[(1,3,2)]: 1
[ [ (1,3,2), (1,2,3) ] ]


 eta=1:
[ (), () ]
[ (1,2,3), () ]
[ (1,2,3), (1,2,3) ]
[ (1,3,2), () ]
[ (1,3,2), (1,2,3) ]
[ (1,3,2), (1,3,2) ]

 eta=2:

 eta=3:

\end{verbatim}
\end{figure}




\begin{table}[htpb]
$$
\begin{array}{|ccc|}
\hline
n & \min\eta &\text{pairs where attained}\\
\hline
3 & 1 & (1\ 2\ 3),  (1\ 2)\\
&& \\
4 & 1 & (1\ 2\ 3),  (1\ 2)(3\ 4)\\
&& \\
5 & 2 & (1\ 2\ 3\ 4\ 5),  (1\ 2)\\
&& \\
6 & 2 & (1\ 2)(3\ 4)(5\ 6),  (1\ 2)\\
&& (1\ 2\ 3),  (1\ 2)(3\ 4)(5\ 6)\\
&& (1\ 2\ 3)(4\ 5\ 6),  (1\ 2)\\
&& \\
7 & 3 & (1\ 2),  (1\ 2)\\
&& (1\ 2)(3\ 4)(5\ 6),  (1\ 2)\\
&& (1\ 2\ 3),  (1\ 2)\\
&& (1\ 2\ 3)(4\ 5\ 6),  (1\ 2)\\
&& (1\ 2\ 3\ 4\ 5\ 6\ 7),  (1\ 2)\\
&& \\
8 & 2 & (1\ 2)(3\ 4)(5\ 6)(7\ 8),  (1\ 2)\\
&& (1\ 2\ 3),  (1\ 2)(3\ 4)(5\ 6)(7\ 8)\\
&& \\
9 & 2 & (1\ 2\ 3)(4\ 5\ 6)(7\ 8\ 9),  (1\ 2)\\
&& \\
10 & 2 & ( 1\  2)( 3\  4)( 5\  6)( 7\  8)( 9\ 10),  ( 1\  2)\\
&& ( 1\  2\  3),  ( 1\  2)( 3\  4)( 5\  6)( 7\  8)( 9\ 10)\\
&& \\
11 & 3 & ( 1\  2),  ( 1\  2)\\
&& ( 1\  2)( 3\  4)( 5\  6)( 7\  8)( 9\ 10),  ( 1\  2)\\
&& ( 1\  2\  3),  ( 1\  2)\\
&& \\
12 & 2 & ( 1\  2)( 3\  4)( 5\  6)( 7\  8)( 9\ 10)(11\ 12),  ( 1\  2)\\
&& ( 1\  2\  3),  ( 1\  2)( 3\  4)( 5\  6)( 7\  8)( 9\ 10)(11\ 12)\\
&& ( 1\  2\  3)( 4\  5\  6)( 7\  8\  9)(10\ 11\ 12),  ( 1\  2)\\
\hline
\end{array}
$$
\caption{Minimum values of $\eta$ for $S_n$}
\label{table:min_eta_for_Sn}
\end{table}

\begin{table}[htpb]
$$
\begin{array}{|ccc|}
\hline
n & \min\eta &\text{pairs where attained}\\
\hline
3 & 1 & (1\ 2\ 3),  (1\ 2\ 3)\\
&& (1\ 3\ 2),  (1\ 2\ 3)\\
&& (1\ 3\ 2),  (1\ 3\ 2)\\
&& \\
4 & 1 & (1\ 2\ 3),  (1\ 2)(3\ 4)\\
&& (1\ 2\ 3),  (1\ 2\ 3)\\
&& (1\ 2\ 4),  (1\ 2)(3\ 4)\\
&& (1\ 2\ 4),  (1\ 2\ 4)\\
&& \\
5 & 3 & (1\ 2\ 3\ 4\ 5),  (1\ 2)(3\ 4)\\
&& (1\ 2\ 3\ 5\ 4),  (1\ 2)(3\ 4)\\
&& \\
6 & 5 & (1\ 2\ 3),  (1\ 2)(3\ 4)\\
&& (1\ 2\ 3)(4\ 5\ 6),  (1\ 2)(3\ 4)\\
&& (1\ 2\ 3)(4\ 5\ 6),  (1\ 2\ 3)\\
&& (1\ 2\ 3\ 4)(5\ 6),  (1\ 2\ 3)\\
&& (1\ 2\ 3\ 4)(5\ 6),  (1\ 2\ 3)(4\ 5\ 6)\\
&& \\
7 & 5 & (1\ 2\ 3),  (1\ 2)(3\ 4)\\
&& (1\ 2\ 3),  (1\ 2\ 3)\\
&& \\
8 & 2 & (1\ 2\ 3),  (1\ 2)(3\ 4)(5\ 6)(7\ 8)\\
&& \\
9 & 4 & (1\ 2\ 3),  (1\ 2)(3\ 4)(5\ 6)(7\ 8)\\
&& \\
10 & 5 & ( 1\  2\  3),  ( 1\  2)( 3\  4)\\
&& ( 1\  2\  3),  ( 1\  2)( 3\  4)( 5\  6)( 7\  8)\\
&& ( 1\  2\  3),  ( 1\  2\  3)\\
&& \\
11 & 5 & ( 1\  2\  3),  ( 1\  2)( 3\  4)\\
&& ( 1\  2\  3),  ( 1\  2\  3)\\
&& \\
12 & 2 & ( 1\  2)( 3\  4)( 5\  6)( 7\  8)( 9\ 10)(11\ 12),  ( 1\  2\  3)\\
\hline
\end{array}
$$
\caption{Minimum values of $\eta$ for $A_n$}
\label{table:min_eta_for_An}
\end{table}


\begin{table}[htpb]
$$
\begin{array}{|@{\;}l@{\;}|@{\;}c@{\;}c@{\;}c@{\;}|} 
\hline
\raisebox{0.35cm}{\parbox{1.6cm}
{$\eta(\alpha^{S_3}\beta^{S_3})$}}
&\begin{sideways}()\end{sideways}&\begin{sideways}(1\ 2)\end{sideways}&\begin{sideways}(1\ 2\ 3)\;\end{sideways}\\ 
\hline
()&1&1&1\\ 
(1\ 2)&1&2&1\\ 
(1\ 2\ 3)&1&1&2\\
\hline
\end{array}
\hspace{1in}
\begin{array}{|@{\;}l@{\;}|@{\;}c@{\;}c@{\;}c@{\;}|} 
\hline
\raisebox{0.4cm}{\parbox{1.6cm}
{$\eta(\alpha^{A_3}\beta^{A_3})$}}
&\begin{sideways}()\end{sideways}&\begin{sideways}(1\ 2\ 3)\;\end{sideways}&\begin{sideways}(1\ 3\ 2)\end{sideways}\\ 
\hline
()&1&1&1\\ 
(1\ 2\ 3)&1&1&1\\ 
(1\ 3\ 2)&1&1&1\\
\hline
\end{array}
$$
\caption{Values of $\eta$ for $S_3$ and $A_3$}
\label{table:eta_for_S3_and_A3}
\end{table}


\begin{table}[htpb]
$$
\begin{array}{|@{\;}l@{\;}|@{\;}c@{\;}c@{\;}c@{\;}c@{\;}c@{\;}|} 
\hline
\raisebox{0.6cm}{\parbox{1.6cm}
{$\eta(\alpha^{S_4}\beta^{S_4})$}}
&\begin{sideways}()\end{sideways}&\begin{sideways}(1\ 2)\end{sideways}&\begin{sideways}(1\ 2)(3\ 4)\;\end{sideways}&\begin{sideways}(1\ 2\ 3)\end{sideways}&\begin{sideways}(1\ 2\ 3\ 4)\end{sideways}\\ 
\hline
()&1&1&1&1&1\\ 
(1\ 2)&1&3&2&2&2\\ 
(1\ 2)(3\ 4)&1&2&2&1&2\\ 
(1\ 2\ 3)&1&2&1&3&2\\ 
(1\ 2\ 3\ 4)&1&2&2&2&3\\
\hline
\end{array}
\hspace{1in}
\begin{array}{|@{\;}l@{\;}|@{\;}c@{\;}c@{\;}c@{\;}c@{\;}|}
\hline
\raisebox{0.55cm}{\parbox{1.6cm}
{$\eta(\alpha^{A_4}\beta^{A_4})$}}
&\begin{sideways}()\end{sideways}&\begin{sideways}(1\ 2)(3\ 4)\;\end{sideways}&\begin{sideways}(1\ 2\ 3)\end{sideways}&\begin{sideways}(1\ 2\ 4)\end{sideways}\\ 
\hline
()&1&1&1&1\\ 
(1\ 2)(3\ 4)&1&2&1&1\\ 
(1\ 2\ 3)&1&1&1&2\\ 
(1\ 2\ 4)&1&1&2&1\\
\hline
\end{array}
$$
\caption{Values of $\eta$ for $S_4$ and $A_4$}
\label{table:eta_for_S4_and_A4}
\end{table}

\begin{table}[htpb]
\begin{minipage}[b]{0.5\linewidth}
$$
\begin{array}{|@{\;}l@{\;}|@{\;}c@{\;}c@{\;}c@{\;}c@{\;}c@{\;}c@{\;}c@{\;}|}
\hline
\;\;\raisebox{0.7cm}{\parbox{1.6cm}{$\eta(\alpha^{S_5}\beta^{S_5})$}}
&\begin{sideways}()\end{sideways}&\begin{sideways}(1\ 2)\end{sideways}&\begin{sideways}(1\ 2)(3\ 4)\end{sideways}&\begin{sideways}(1\ 2\ 3)\end{sideways}&\begin{sideways}(1\ 2\ 3)(4\ 5)\;\end{sideways}&\begin{sideways}(1\ 2\ 3\ 4)\end{sideways}&\begin{sideways}(1\ 2\ 3\ 4\ 5)\end{sideways}\\ 
\hline
()&1&1&1&1&1&1&1\\ 
(1\ 2)&1&3&3&3&3&3&2\\ 
(1\ 2)(3\ 4)&1&3&4&3&3&3&3\\ 
(1\ 2\ 3)&1&3&3&4&3&3&3\\ 
(1\ 2\ 3)(4\ 5)&1&3&3&3&4&3&3\\ 
(1\ 2\ 3\ 4)&1&3&3&3&3&4&3\\ 
(1\ 2\ 3\ 4\ 5)&1&2&3&3&3&3&4\\
\hline
\end{array}
$$
\end{minipage}\begin{minipage}[b]{0.5\linewidth}
$$
\begin{array}{|@{\;}l@{\;}|@{\;}c@{\;}c@{\;}c@{\;}c@{\;}c@{\;}|} 
\hline
\:\raisebox{0.6cm}{\parbox{1.6cm}{$\eta(\alpha^{A_5}\beta^{A_5})$}}
&\begin{sideways}()\end{sideways}&\begin{sideways}(1\ 2)(3\ 4)\end{sideways}&\begin{sideways}(1\ 2\ 3)\end{sideways}&\begin{sideways}(1\ 2\ 3\ 4\ 5)\;\end{sideways}&\begin{sideways}(1\ 2\ 3\ 5\ 4)\end{sideways}\\ 
\hline
()&1&1&1&1&1\\ 
(1\ 2)(3\ 4)&1&5&4&3&3\\ 
(1\ 2\ 3)&1&4&5&4&4\\ 
(1\ 2\ 3\ 4\ 5)&1&3&4&4&4\\ 
(1\ 2\ 3\ 5\ 4)&1&3&4&4&4\\
\hline
\end{array}
$$
\end{minipage}
\caption{Values of $\eta$ for $S_5$ and $A_5$}
\label{table:eta_for_S5_and_A5}
\end{table}


\begin{table}[htpb]
$$
\begin{array}{|@{\;}l@{\;}|@{\;}c@{\;}c@{\;}c@{\;}c@{\;}c@{\;}c@{\;}c@{\;}c@{\;}c@{\;}c@{\;}c@{\;}|} 
\hline
\;\;\;\raisebox{0.95cm}{\parbox{1.6cm}{$\eta(\alpha^{S_6}\beta^{S_6})$}}
&\begin{sideways}()\end{sideways}&\begin{sideways}(1\ 2)\end{sideways}&\begin{sideways}(1\ 2)(3\ 4)\end{sideways}&\begin{sideways}(1\ 2)(3\ 4)(5\ 6)\;\end{sideways}&\begin{sideways}(1\ 2\ 3)\end{sideways}&\begin{sideways}(1\ 2\ 3)(4\ 5)\end{sideways}&\begin{sideways}(1\ 2\ 3)(4\ 5\ 6)\end{sideways}&\begin{sideways}(1\ 2\ 3\ 4)\end{sideways}&\begin{sideways}(1\ 2\ 3\ 4)(5\ 6)\end{sideways}&\begin{sideways}(1\ 2\ 3\ 4\ 5)\end{sideways}&\begin{sideways}(1\ 2\ 3\ 4\ 5\ 6)\end{sideways}\\ 
\hline
()&1&1&1&1&1&1&1&1&1&1&1\\ 
(1\ 2)&1&3&4&2&3&5&2&4&4&3&3\\ 
(1\ 2)(3\ 4)&1&4&6&4&4&4&4&5&5&5&4\\ 
(1\ 2)(3\ 4)(5\ 6)&1&2&4&3&2&3&3&4&4&3&5\\ 
(1\ 2\ 3)&1&3&4&2&5&5&4&4&4&5&4\\ 
(1\ 2\ 3)(4\ 5)&1&5&4&3&5&6&4&5&5&5&5\\ 
(1\ 2\ 3)(4\ 5\ 6)&1&2&4&3&4&4&5&4&4&5&5\\ 
(1\ 2\ 3\ 4)&1&4&5&4&4&5&4&6&5&5&5\\ 
(1\ 2\ 3\ 4)(5\ 6)&1&4&5&4&4&5&4&5&6&5&5\\ 
(1\ 2\ 3\ 4\ 5)&1&3&5&3&5&5&5&5&5&6&5\\ 
(1\ 2\ 3\ 4\ 5\ 6)&1&3&4&5&4&5&5&5&5&5&6\\
\hline
\end{array}
$$
\caption{Values of $\eta$ for $S_6$}
\label{table:eta_for_S6}
\end{table}

\begin{table}[htpb]
$$
\begin{array}{|@{\;}l@{\;}|@{\;}c@{\;}c@{\;}c@{\;}c@{\;}c@{\;}c@{\;}c@{\;}c@{\;}c@{\;}c@{\;}c@{\;}c@{\;}c@{\;}c@{\;}c@{\;}|}
\hline
\;\;\;\;\;\raisebox{1.0cm}{\parbox{1.6cm}{$\eta(\alpha^{S_7}\beta^{S_7})$}}
&\begin{sideways}()\end{sideways}&\begin{sideways}(1\ 2)\end{sideways}&\begin{sideways}(1\ 2)(3\ 4)\end{sideways}&\begin{sideways}(1\ 2)(3\ 4)(5\ 6)\end{sideways}&\begin{sideways}(1\ 2\ 3)\end{sideways}&\begin{sideways}(1\ 2\ 3)(4\ 5)\end{sideways}&\begin{sideways}(1\ 2\ 3)(4\ 5)(6\ 7)\;\end{sideways}&\begin{sideways}(1\ 2\ 3)(4\ 5\ 6)\end{sideways}&\begin{sideways}(1\ 2\ 3\ 4)\end{sideways}&\begin{sideways}(1\ 2\ 3\ 4)(5\ 6)\end{sideways}&\begin{sideways}(1\ 2\ 3\ 4)(5\ 6\ 7)\end{sideways}&\begin{sideways}(1\ 2\ 3\ 4\ 5)\end{sideways}&\begin{sideways}(1\ 2\ 3\ 4\ 5)(6\ 7)\end{sideways}&\begin{sideways}(1\ 2\ 3\ 4\ 5\ 6)\end{sideways}&\begin{sideways}(1\ 2\ 3\ 4\ 5\ 6\ 7)\end{sideways}\\ 
\hline
()&1&1&1&1&1&1&1&1&1&1&1&1&1&1&1\\ 
(1\ 2)&1&3&4&3&3&6&4&3&4&6&4&4&4&4&3\\ 
(1\ 2)(3\ 4)&1&4&7&7&5&7&6&5&6&7&5&7&6&6&5\\ 
(1\ 2)(3\ 4)(5\ 6)&1&3&7&7&4&7&7&6&6&6&6&6&7&7&6\\ 
(1\ 2\ 3)&1&3&5&4&5&7&5&6&5&6&5&6&5&6&5\\ 
(1\ 2\ 3)(4\ 5)&1&6&7&7&7&8&7&7&7&7&7&7&7&7&6\\ 
(1\ 2\ 3)(4\ 5)(6\ 7)&1&4&6&7&5&7&8&6&5&7&7&6&7&6&7\\ 
(1\ 2\ 3)(4\ 5\ 6)&1&3&5&6&6&7&6&8&6&7&7&7&6&7&7\\ 
(1\ 2\ 3\ 4)&1&4&6&6&5&7&5&6&7&7&6&7&6&7&6\\ 
(1\ 2\ 3\ 4)(5\ 6)&1&6&7&6&6&7&7&7&7&8&7&7&7&7&7\\ 
(1\ 2\ 3\ 4)(5\ 6\ 7)&1&4&5&6&5&7&7&7&6&7&8&6&7&7&7\\ 
(1\ 2\ 3\ 4\ 5)&1&4&7&6&6&7&6&7&7&7&6&8&7&7&7\\ 
(1\ 2\ 3\ 4\ 5)(6\ 7)&1&4&6&7&5&7&7&6&6&7&7&7&8&7&7\\ 
(1\ 2\ 3\ 4\ 5\ 6)&1&4&6&7&6&7&6&7&7&7&7&7&7&8&7\\ 
(1\ 2\ 3\ 4\ 5\ 6\ 7)&1&3&5&6&5&6&7&7&6&7&7&7&7&7&8\\
\hline
\end{array}
$$
\caption{Values of $\eta$ for $S_7$}
\label{table:eta_for_S7}
\end{table}

\begin{table}[htpb]
\begin{small}
$$
\begin{array}{|@{\;}l@{\;}|@{\;}c@{\;}c@{\;}c@{\;}c@{\;}c@{\;}c@{\;}c@{\;}c@{\;}c@{\;}c@{\;}c@{\;}c@{\;}c@{\;}c@{\;}c@{\;}c@{\;}c@{\;}c@{\;}c@{\;}c@{\;}c@{\;}c@{\;}|}
\hline 
\;\;\;\;\;\;\;\raisebox{1.2cm}{\parbox{1.6cm}{$\eta(\alpha^{S_8}\beta^{S_8})$}}
&\begin{sideways}()\end{sideways}&\begin{sideways}(1\ 2)\end{sideways}&\begin{sideways}(1\ 2)(3\ 4)\end{sideways}&\begin{sideways}(1\ 2)(3\ 4)(5\ 6)\end{sideways}&\begin{sideways}(1\ 2)(3\ 4)(5\ 6)(7\ 8)\;\end{sideways}&\begin{sideways}(1\ 2\ 3)\end{sideways}&\begin{sideways}(1\ 2\ 3)(4\ 5)\end{sideways}&\begin{sideways}(1\ 2\ 3)(4\ 5)(6\ 7)\end{sideways}&\begin{sideways}(1\ 2\ 3)(4\ 5\ 6)\end{sideways}&\begin{sideways}(1\ 2\ 3)(4\ 5\ 6)(7\ 8)\end{sideways}&\begin{sideways}(1\ 2\ 3\ 4)\end{sideways}&\begin{sideways}(1\ 2\ 3\ 4)(5\ 6)\end{sideways}&\begin{sideways}(1\ 2\ 3\ 4)(5\ 6)(7\ 8)\end{sideways}&\begin{sideways}(1\ 2\ 3\ 4)(5\ 6\ 7)\end{sideways}&\begin{sideways}(1\ 2\ 3\ 4)(5\ 6\ 7\ 8)\end{sideways}&\begin{sideways}(1\ 2\ 3\ 4\ 5)\end{sideways}&\begin{sideways}(1\ 2\ 3\ 4\ 5)(6\ 7)\end{sideways}&\begin{sideways}(1\ 2\ 3\ 4\ 5)(6\ 7\ 8)\end{sideways}&\begin{sideways}(1\ 2\ 3\ 4\ 5\ 6)\end{sideways}&\begin{sideways}(1\ 2\ 3\ 4\ 5\ 6)(7\ 8)\end{sideways}&\begin{sideways}(1\ 2\ 3\ 4\ 5\ 6\ 7)\end{sideways}&\begin{sideways}(1\ 2\ 3\ 4\ 5\ 6\ 7\ 8)\end{sideways}\\ 
\hline
()&1&1&1&1&1&1&1&1&1&1&1&1&1&1&1&1&1&1&1&1&1&1\\ 
(1\ 2)&1&3&4&4&2&3&6&6&4&4&4&7&5&6&3&4&6&4&5&5&4&4\\ 
(1\ 2)(3\ 4)&1&4&8&9&5&5&9&10&9&8&7&11&9&8&8&8&9&7&9&9&8&6\\ 
(1\ 2)(3\ 4)(5\ 6)&1&4&9&12&7&5&10&10&9&10&8&10&11&9&8&9&10&8&11&9&9&9\\ 
(1\ 2)(3\ 4)(5\ 6)(7\ 8)&1&2&5&7&5&2&6&7&6&6&5&9&8&6&7&5&7&6&8&10&8&9\\ 
(1\ 2\ 3)&1&3&5&5&2&5&8&8&7&6&5&8&6&8&5&7&8&7&7&7&8&6\\ 
(1\ 2\ 3)(4\ 5)&1&6&9&10&6&8&12&10&10&10&9&10&10&11&8&10&11&9&11&9&9&9\\ 
(1\ 2\ 3)(4\ 5)(6\ 7)&1&6&10&10&7&8&10&12&10&10&8&11&10&10&9&10&10&11&9&11&11&9\\ 
(1\ 2\ 3)(4\ 5\ 6)&1&4&9&9&6&7&10&10&12&10&8&11&8&10&10&10&9&10&10&10&11&9\\ 
(1\ 2\ 3)(4\ 5\ 6)(7\ 8)&1&4&8&10&6&6&10&10&10&12&8&9&10&10&9&8&11&10&10&10&9&11\\ 
(1\ 2\ 3\ 4)&1&4&7&8&5&5&9&8&8&8&8&10&8&9&7&8&10&7&10&8&9&9\\ 
(1\ 2\ 3\ 4)(5\ 6)&1&7&11&10&9&8&10&11&11&9&10&12&10&10&11&11&10&10&10&11&11&9\\ 
(1\ 2\ 3\ 4)(5\ 6)(7\ 8)&1&5&9&11&8&6&10&10&8&10&8&10&12&11&10&8&10&9&10&10&9&11\\ 
(1\ 2\ 3\ 4)(5\ 6\ 7)&1&6&8&9&6&8&11&10&10&10&9&10&11&12&10&9&11&10&11&9&10&11\\ 
(1\ 2\ 3\ 4)(5\ 6\ 7\ 8)&1&3&8&8&7&5&8&9&10&9&7&11&10&10&12&8&9&10&9&11&11&10\\ 
(1\ 2\ 3\ 4\ 5)&1&4&8&9&5&7&10&10&10&8&8&11&8&9&8&11&10&10&10&10&11&9\\ 
(1\ 2\ 3\ 4\ 5)(6\ 7)&1&6&9&10&7&8&11&10&9&11&10&10&10&11&9&10&12&10&11&10&10&11\\ 
(1\ 2\ 3\ 4\ 5)(6\ 7\ 8)&1&4&7&8&6&7&9&11&10&10&7&10&9&10&10&10&10&12&9&11&11&10\\ 
(1\ 2\ 3\ 4\ 5\ 6)&1&5&9&11&8&7&11&9&10&10&10&10&10&11&9&10&11&9&12&10&10&11\\ 
(1\ 2\ 3\ 4\ 5\ 6)(7\ 8)&1&5&9&9&10&7&9&11&10&10&8&11&10&9&11&10&10&11&10&12&11&10\\ 
(1\ 2\ 3\ 4\ 5\ 6\ 7)&1&4&8&9&8&8&9&11&11&9&9&11&9&10&11&11&10&11&10&11&12&10\\ 
(1\ 2\ 3\ 4\ 5\ 6\ 7\ 8)&1&4&6&9&9&6&9&9&9&11&9&9&11&11&10&9&11&10&11&10&10&12\\
\hline
\end{array}
$$
\end{small}
\caption{Values of $\eta$ for $S_8$}
\label{table:eta_for_S8}
\end{table}

\begin{table}[htpb]
\begin{tiny}
$$
\begin{array}{|@{\;}l@{\;}|@{\;}c@{\;}c@{\;}c@{\;}c@{\;}c@{\;}c@{\;}c@{\;}c@{\;}c@{\;}c@{\;}c@{\;}c@{\;}c@{\;}c@{\;}c@{\;}c@{\;}c@{\;}c@{\;}c@{\;}c@{\;}c@{\;}c@{\;}c@{\;}c@{\;}c@{\;}c@{\;}c@{\;}c@{\;}c@{\;}c@{\;}|} 
\hline
\;\;\;\;\;\;\;\;\raisebox{1.0cm}{\parbox{1.6cm}{$\eta(\alpha^{S_9}\beta^{S_9})$}}
&\begin{sideways}()\end{sideways}&\begin{sideways}(1\ 2)\end{sideways}&\begin{sideways}(1\ 2)(3\ 4)\end{sideways}&\begin{sideways}(1\ 2)(3\ 4)(5\ 6)\end{sideways}&\begin{sideways}(1\ 2)(3\ 4)(5\ 6)(7\ 8)\end{sideways}&\begin{sideways}(1\ 2\ 3)\end{sideways}&\begin{sideways}(1\ 2\ 3)(4\ 5)\end{sideways}&\begin{sideways}(1\ 2\ 3)(4\ 5)(6\ 7)\end{sideways}&\begin{sideways}(1\ 2\ 3)(4\ 5)(6\ 7)(8\ 9)\;\end{sideways}&\begin{sideways}(1\ 2\ 3)(4\ 5\ 6)\end{sideways}&\begin{sideways}(1\ 2\ 3)(4\ 5\ 6)(7\ 8)\end{sideways}&\begin{sideways}(1\ 2\ 3)(4\ 5\ 6)(7\ 8\ 9)\end{sideways}&\begin{sideways}(1\ 2\ 3\ 4)\end{sideways}&\begin{sideways}(1\ 2\ 3\ 4)(5\ 6)\end{sideways}&\begin{sideways}(1\ 2\ 3\ 4)(5\ 6)(7\ 8)\end{sideways}&\begin{sideways}(1\ 2\ 3\ 4)(5\ 6\ 7)\end{sideways}&\begin{sideways}(1\ 2\ 3\ 4)(5\ 6\ 7)(8\ 9)\end{sideways}&\begin{sideways}(1\ 2\ 3\ 4)(5\ 6\ 7\ 8)\end{sideways}&\begin{sideways}(1\ 2\ 3\ 4\ 5)\end{sideways}&\begin{sideways}(1\ 2\ 3\ 4\ 5)(6\ 7)\end{sideways}&\begin{sideways}(1\ 2\ 3\ 4\ 5)(6\ 7)(8\ 9)\end{sideways}&\begin{sideways}(1\ 2\ 3\ 4\ 5)(6\ 7\ 8)\end{sideways}&\begin{sideways}(1\ 2\ 3\ 4\ 5)(6\ 7\ 8\ 9)\end{sideways}&\begin{sideways}(1\ 2\ 3\ 4\ 5\ 6)\end{sideways}&\begin{sideways}(1\ 2\ 3\ 4\ 5\ 6)(7\ 8)\end{sideways}&\begin{sideways}(1\ 2\ 3\ 4\ 5\ 6)(7\ 8\ 9)\end{sideways}&\begin{sideways}(1\ 2\ 3\ 4\ 5\ 6\ 7)\end{sideways}&\begin{sideways}(1\ 2\ 3\ 4\ 5\ 6\ 7)(8\ 9)\end{sideways}&\begin{sideways}(1\ 2\ 3\ 4\ 5\ 6\ 7\ 8)\end{sideways}&\begin{sideways}(1\ 2\ 3\ 4\ 5\ 6\ 7\ 8\ 9)\end{sideways}\\ 
\hline
()&1&1&1&1&1&1&1&1&1&1&1&1&1&1&1&1&1&1&1&1&1&1&1&1&1&1&1&1&1&1\\ 
(1\ 2)&1&3&4&4&3&3&6&7&4&4&6&2&4&7&7&7&7&4&4&7&5&6&5&5&7&5&5&5&5&4\\ 
(1\ 2)(3\ 4)&1&4&8&10&8&5&10&14&9&10&11&6&7&13&13&12&12&11&9&13&10&10&9&10&13&9&11&10&9&8\\ 
(1\ 2)(3\ 4)(5\ 6)&1&4&10&15&14&6&13&14&13&12&14&10&9&14&15&14&12&12&11&15&13&11&12&14&13&12&13&13&13&10\\ 
(1\ 2)(3\ 4)(5\ 6)(7\ 8)&1&3&8&14&12&4&12&14&14&11&13&7&7&13&14&12&13&12&9&13&14&12&12&12&14&12&13&13&13&13\\ 
(1\ 2\ 3)&1&3&5&6&4&5&8&10&6&8&9&5&5&9&9&10&9&7&7&10&7&11&7&8&10&8&9&8&9&8\\ 
(1\ 2\ 3)(4\ 5)&1&6&10&13&12&8&15&14&12&14&15&10&10&14&14&15&13&12&12&15&12&13&12&15&13&13&13&13&13&10\\ 
(1\ 2\ 3)(4\ 5)(6\ 7)&1&7&14&14&14&10&14&16&14&15&14&13&11&15&14&14&15&14&14&14&15&15&12&13&15&13&15&13&13&13\\ 
(1\ 2\ 3)(4\ 5)(6\ 7)(8\ 9)&1&4&9&13&14&6&12&14&16&11&14&12&8&12&15&14&14&12&10&14&14&12&14&12&13&15&12&15&13&13\\ 
(1\ 2\ 3)(4\ 5\ 6)&1&4&10&12&11&8&14&15&11&16&14&13&9&15&12&14&14&14&13&13&12&15&12&14&14&13&15&12&13&13\\ 
(1\ 2\ 3)(4\ 5\ 6)(7\ 8)&1&6&11&14&13&9&15&14&14&14&16&14&12&13&15&15&14&13&12&15&13&14&14&14&14&15&13&15&15&13\\ 
(1\ 2\ 3)(4\ 5\ 6)(7\ 8\ 9)&1&2&6&10&7&5&10&13&12&13&14&13&7&11&12&13&14&12&10&12&12&15&13&11&14&14&13&13&13&15\\ 
(1\ 2\ 3\ 4)&1&4&7&9&7&5&10&11&8&9&12&7&8&12&11&12&10&10&9&13&9&11&10&11&12&10&12&11&13&10\\ 
(1\ 2\ 3\ 4)(5\ 6)&1&7&13&14&13&9&14&15&12&15&13&11&12&16&14&14&15&15&14&14&14&14&13&14&15&12&15&13&13&13\\ 
(1\ 2\ 3\ 4)(5\ 6)(7\ 8)&1&7&13&15&14&9&14&14&15&12&15&12&11&14&16&15&14&14&12&15&14&13&15&14&14&14&13&15&15&13\\ 
(1\ 2\ 3\ 4)(5\ 6\ 7)&1&7&12&14&12&10&15&14&14&14&15&13&12&14&15&16&14&14&13&15&12&14&15&15&13&15&14&14&15&13\\ 
(1\ 2\ 3\ 4)(5\ 6\ 7)(8\ 9)&1&7&12&12&13&9&13&15&14&14&14&14&10&15&14&14&16&15&12&13&15&15&14&12&15&14&14&14&13&15\\ 
(1\ 2\ 3\ 4)(5\ 6\ 7\ 8)&1&4&11&12&12&7&12&14&12&14&13&12&10&15&14&14&15&16&12&13&14&15&14&13&15&13&15&13&14&15\\ 
(1\ 2\ 3\ 4\ 5)&1&4&9&11&9&7&12&14&10&13&12&10&9&14&12&13&12&12&12&14&12&14&11&13&14&11&15&12&13&13\\ 
(1\ 2\ 3\ 4\ 5)(6\ 7)&1&7&13&15&13&10&15&14&14&13&15&12&13&14&15&15&13&13&14&16&14&14&15&15&14&14&14&15&15&13\\ 
(1\ 2\ 3\ 4\ 5)(6\ 7)(8\ 9)&1&5&10&13&14&7&12&15&14&12&13&12&9&14&14&12&15&14&12&14&16&14&14&12&15&13&14&14&13&15\\ 
(1\ 2\ 3\ 4\ 5)(6\ 7\ 8)&1&6&10&11&12&11&13&15&12&15&14&15&11&14&13&14&15&15&14&14&14&16&14&13&15&14&15&13&14&15\\ 
(1\ 2\ 3\ 4\ 5)(6\ 7\ 8\ 9)&1&5&9&12&12&7&12&12&14&12&14&13&10&13&15&15&14&14&11&15&14&14&16&13&13&15&13&15&15&14\\ 
(1\ 2\ 3\ 4\ 5\ 6)&1&5&10&14&12&8&15&13&12&14&14&11&11&14&14&15&12&13&13&15&12&13&13&16&14&14&14&14&15&13\\ 
(1\ 2\ 3\ 4\ 5\ 6)(7\ 8)&1&7&13&13&14&10&13&15&13&14&14&14&12&15&14&13&15&15&14&14&15&15&13&14&16&14&15&14&14&15\\ 
(1\ 2\ 3\ 4\ 5\ 6)(7\ 8\ 9)&1&5&9&12&12&8&13&13&15&13&15&14&10&12&14&15&14&13&11&14&13&14&15&14&14&16&13&15&15&14\\ 
(1\ 2\ 3\ 4\ 5\ 6\ 7)&1&5&11&13&13&9&13&15&12&15&13&13&12&15&13&14&14&15&15&14&14&15&13&14&15&13&16&14&14&15\\ 
(1\ 2\ 3\ 4\ 5\ 6\ 7)(8\ 9)&1&5&10&13&13&8&13&13&15&12&15&13&11&13&15&14&14&13&12&15&14&13&15&14&14&15&14&16&15&14\\ 
(1\ 2\ 3\ 4\ 5\ 6\ 7\ 8)&1&5&9&13&13&9&13&13&13&13&15&13&13&13&15&15&13&14&13&15&13&14&15&15&14&15&14&15&16&14\\ 
(1\ 2\ 3\ 4\ 5\ 6\ 7\ 8\ 9)&1&4&8&10&13&8&10&13&13&13&13&15&10&13&13&13&15&15&13&13&15&15&14&13&15&14&15&14&14&16\\
\hline
\end{array}
$$
\end{tiny}
\caption{Values of $\eta$ for $S_9$}
\label{table:eta_for_S9}
\end{table}


\begin{table}[htpb]
$$
\begin{array}{|@{\;}l@{\;}|@{\;}c@{\;}c@{\;}c@{\;}c@{\;}c@{\;}c@{\;}c@{\;}|} 
\hline
\;\;\:\raisebox{0.9cm}{\parbox{1.6cm}{$\eta(\alpha^{A_6}\beta^{A_6})$}}
&\begin{sideways}()\end{sideways}&\begin{sideways}(1\ 2)(3\ 4)\end{sideways}&\begin{sideways}(1\ 2\ 3)\end{sideways}&\begin{sideways}(1\ 2\ 3)(4\ 5\ 6)\end{sideways}&\begin{sideways}(1\ 2\ 3\ 4)(5\ 6)\;\end{sideways}&\begin{sideways}(1\ 2\ 3\ 4\ 5)\end{sideways}&\begin{sideways}(1\ 2\ 3\ 4\ 6)\end{sideways}\\ 
\hline
()&1&1&1&1&1&1&1\\ 
(1\ 2)(3\ 4)&1&7&5&5&6&6&6\\ 
(1\ 2\ 3)&1&5&6&5&5&6&6\\ 
(1\ 2\ 3)(4\ 5\ 6)&1&5&5&6&5&6&6\\ 
(1\ 2\ 3\ 4)(5\ 6)&1&6&5&5&7&6&6\\ 
(1\ 2\ 3\ 4\ 5)&1&6&6&6&6&7&6\\ 
(1\ 2\ 3\ 4\ 6)&1&6&6&6&6&6&7\\
\hline
\end{array}
$$
\caption{Values of $\eta$ for $A_6$}
\label{table:eta_for_A6}
\end{table}

\begin{table}[htpb]
$$
\begin{array}{|@{\;}l@{\;}|@{\;}c@{\;}c@{\;}c@{\;}c@{\;}c@{\;}c@{\;}c@{\;}c@{\;}c@{\;}|} 
\hline
\;\;\;\;\;\raisebox{1.1cm}{\parbox{1.6cm}{$\eta(\alpha^{A_7}\beta^{A_7})$}}
&\begin{sideways}()\end{sideways}&\begin{sideways}(1\ 2)(3\ 4)\end{sideways}&\begin{sideways}(1\ 2\ 3)\end{sideways}&\begin{sideways}(1\ 2\ 3)(4\ 5)(6\ 7)\;\end{sideways}&\begin{sideways}(1\ 2\ 3)(4\ 5\ 6)\end{sideways}&\begin{sideways}(1\ 2\ 3\ 4)(5\ 6)\end{sideways}&\begin{sideways}(1\ 2\ 3\ 4\ 5)\end{sideways}&\begin{sideways}(1\ 2\ 3\ 4\ 5\ 6\ 7)\end{sideways}&\begin{sideways}(1\ 2\ 3\ 4\ 5\ 7\ 6)\end{sideways}\\ 
\hline
()&1&1&1&1&1&1&1&1&1\\ 
(1\ 2)(3\ 4)&1&7&5&7&6&8&8&6&6\\ 
(1\ 2\ 3)&1&5&5&6&7&7&7&6&6\\ 
(1\ 2\ 3)(4\ 5)(6\ 7)&1&7&6&9&7&8&7&8&8\\ 
(1\ 2\ 3)(4\ 5\ 6)&1&6&7&7&9&8&8&8&8\\ 
(1\ 2\ 3\ 4)(5\ 6)&1&8&7&8&8&9&8&8&8\\ 
(1\ 2\ 3\ 4\ 5)&1&8&7&7&8&8&9&8&8\\ 
(1\ 2\ 3\ 4\ 5\ 6\ 7)&1&6&6&8&8&8&8&8&9\\ 
(1\ 2\ 3\ 4\ 5\ 7\ 6)&1&6&6&8&8&8&8&9&8\\
\hline
\end{array}
$$
\caption{Values of $\eta$ for $A_7$}
\label{table:eta_for_A7}
\end{table}

\begin{table}[htpb]
$$
\begin{array}{|@{\;}l@{\;}|@{\;}c@{\;}c@{\;}c@{\;}c@{\;}c@{\;}c@{\;}c@{\;}c@{\;}c@{\;}c@{\;}c@{\;}c@{\;}c@{\;}c@{\;}|} 
\hline
\;\;\;\;\;\;\;\raisebox{1.3cm}{\parbox{1.6cm}{$\eta(\alpha^{A_8}\beta^{A_8})$}}
&\begin{sideways}()\end{sideways}&\begin{sideways}(1\ 2)(3\ 4)\end{sideways}&\begin{sideways}(1\ 2)(3\ 4)(5\ 6)(7\ 8)\;\end{sideways}&\begin{sideways}(1\ 2\ 3)\end{sideways}&\begin{sideways}(1\ 2\ 3)(4\ 5)(6\ 7)\end{sideways}&\begin{sideways}(1\ 2\ 3)(4\ 5\ 6)\end{sideways}&\begin{sideways}(1\ 2\ 3\ 4)(5\ 6)\end{sideways}&\begin{sideways}(1\ 2\ 3\ 4)(5\ 6\ 7\ 8)\end{sideways}&\begin{sideways}(1\ 2\ 3\ 4\ 5)\end{sideways}&\begin{sideways}(1\ 2\ 3\ 4\ 5)(6\ 7\ 8)\end{sideways}&\begin{sideways}(1\ 2\ 3\ 4\ 5)(6\ 8\ 7)\end{sideways}&\begin{sideways}(1\ 2\ 3\ 4\ 5\ 6)(7\ 8)\end{sideways}&\begin{sideways}(1\ 2\ 3\ 4\ 5\ 6\ 7)\end{sideways}&\begin{sideways}(1\ 2\ 3\ 4\ 5\ 6\ 8)\end{sideways}\\ 
\hline
()&1&1&1&1&1&1&1&1&1&1&1&1&1&1\\ 
(1\ 2)(3\ 4)&1&8&5&5&12&11&13&10&9&9&9&11&10&10\\ 
(1\ 2)(3\ 4)(5\ 6)(7\ 8)&1&5&5&2&9&7&11&8&7&7&7&12&10&10\\ 
(1\ 2\ 3)&1&5&2&5&10&9&9&7&9&9&9&9&10&10\\ 
(1\ 2\ 3)(4\ 5)(6\ 7)&1&12&9&10&14&12&13&11&12&13&13&13&13&13\\ 
(1\ 2\ 3)(4\ 5\ 6)&1&11&7&9&12&14&13&12&12&12&12&12&13&13\\ 
(1\ 2\ 3\ 4)(5\ 6)&1&13&11&9&13&13&14&13&13&12&12&13&13&13\\ 
(1\ 2\ 3\ 4)(5\ 6\ 7\ 8)&1&10&8&7&11&12&13&14&10&12&12&13&13&13\\ 
(1\ 2\ 3\ 4\ 5)&1&9&7&9&12&12&13&10&13&12&12&12&13&13\\ 
(1\ 2\ 3\ 4\ 5)(6\ 7\ 8)&1&9&7&9&13&12&12&12&12&13&13&13&13&13\\ 
(1\ 2\ 3\ 4\ 5)(6\ 8\ 7)&1&9&7&9&13&12&12&12&12&13&13&13&13&13\\ 
(1\ 2\ 3\ 4\ 5\ 6)(7\ 8)&1&11&12&9&13&12&13&13&12&13&13&14&13&13\\ 
(1\ 2\ 3\ 4\ 5\ 6\ 7)&1&10&10&10&13&13&13&13&13&13&13&13&13&14\\ 
(1\ 2\ 3\ 4\ 5\ 6\ 8)&1&10&10&10&13&13&13&13&13&13&13&13&14&13\\
\hline
\end{array}
$$
\caption{Values of $\eta$ for $A_8$}
\label{table:eta_for_A8}
\end{table}

\begin{table}[htpb]
$$
\begin{array}{|@{\;}l@{\;}|@{\;}c@{\;}c@{\;}c@{\;}c@{\;}c@{\;}c@{\;}c@{\;}c@{\;}c@{\;}c@{\;}c@{\;}c@{\;}c@{\;}c@{\;}c@{\;}c@{\;}c@{\;}c@{\;}|} 
\hline
\;\;\;\;\;\;\;\;\raisebox{1.4cm}{\parbox{1.6cm}{$\eta(\alpha^{A_9}\beta^{A_9})$}}
&\begin{sideways}()\end{sideways}&\begin{sideways}(1\ 2)(3\ 4)\end{sideways}&\begin{sideways}(1\ 2)(3\ 4)(5\ 6)(7\ 8)\end{sideways}&\begin{sideways}(1\ 2\ 3)\end{sideways}&\begin{sideways}(1\ 2\ 3)(4\ 5)(6\ 7)\end{sideways}&\begin{sideways}(1\ 2\ 3)(4\ 5\ 6)\end{sideways}&\begin{sideways}(1\ 2\ 3)(4\ 5\ 6)(7\ 8\ 9)\;\end{sideways}&\begin{sideways}(1\ 2\ 3\ 4)(5\ 6)\end{sideways}&\begin{sideways}(1\ 2\ 3\ 4)(5\ 6\ 7)(8\ 9)\end{sideways}&\begin{sideways}(1\ 2\ 3\ 4)(5\ 6\ 7\ 8)\end{sideways}&\begin{sideways}(1\ 2\ 3\ 4\ 5)\end{sideways}&\begin{sideways}(1\ 2\ 3\ 4\ 5)(6\ 7)(8\ 9)\end{sideways}&\begin{sideways}(1\ 2\ 3\ 4\ 5)(6\ 7\ 8)\end{sideways}&\begin{sideways}(1\ 2\ 3\ 4\ 5)(6\ 7\ 9)\end{sideways}&\begin{sideways}(1\ 2\ 3\ 4\ 5\ 6)(7\ 8)\end{sideways}&\begin{sideways}(1\ 2\ 3\ 4\ 5\ 6\ 7)\end{sideways}&\begin{sideways}(1\ 2\ 3\ 4\ 5\ 6\ 7\ 8\ 9)\end{sideways}&\begin{sideways}(1\ 2\ 3\ 4\ 5\ 6\ 7\ 9\ 8)\end{sideways}\\ 
\hline
()&1&1&1&1&1&1&1&1&1&1&1&1&1&1&1&1&1&1\\ 
(1\ 2)(3\ 4)&1&8&8&5&15&11&8&14&14&13&9&11&12&12&15&13&10&10\\ 
(1\ 2)(3\ 4)(5\ 6)(7\ 8)&1&8&13&4&16&13&9&15&15&14&11&16&13&13&16&15&15&15\\ 
(1\ 2\ 3)&1&5&4&5&11&9&7&9&11&9&8&9&13&13&12&11&10&10\\ 
(1\ 2\ 3)(4\ 5)(6\ 7)&1&15&16&11&18&17&15&17&17&16&16&17&17&17&17&17&15&15\\ 
(1\ 2\ 3)(4\ 5\ 6)&1&11&13&9&17&18&15&17&16&16&15&14&17&17&16&17&15&15\\ 
(1\ 2\ 3)(4\ 5\ 6)(7\ 8\ 9)&1&8&9&7&15&15&15&13&16&14&12&14&17&17&16&15&17&17\\ 
(1\ 2\ 3\ 4)(5\ 6)&1&14&15&9&17&17&13&18&17&17&16&16&16&16&17&17&15&15\\ 
(1\ 2\ 3\ 4)(5\ 6\ 7)(8\ 9)&1&14&15&11&17&16&16&17&18&17&14&17&17&17&17&16&17&17\\ 
(1\ 2\ 3\ 4)(5\ 6\ 7\ 8)&1&13&14&9&16&16&14&17&17&18&14&16&17&17&17&17&17&17\\ 
(1\ 2\ 3\ 4\ 5)&1&9&11&8&16&15&12&16&14&14&14&14&16&16&16&17&15&15\\ 
(1\ 2\ 3\ 4\ 5)(6\ 7)(8\ 9)&1&11&16&9&17&14&14&16&17&16&14&18&16&16&17&16&17&17\\ 
(1\ 2\ 3\ 4\ 5)(6\ 7\ 8)&1&12&13&13&17&17&17&16&17&17&16&16&17&17&17&17&17&17\\ 
(1\ 2\ 3\ 4\ 5)(6\ 7\ 9)&1&12&13&13&17&17&17&16&17&17&16&16&17&17&17&17&17&17\\ 
(1\ 2\ 3\ 4\ 5\ 6)(7\ 8)&1&15&16&12&17&16&16&17&17&17&16&17&17&17&18&17&17&17\\ 
(1\ 2\ 3\ 4\ 5\ 6\ 7)&1&13&15&11&17&17&15&17&16&17&17&16&17&17&17&18&17&17\\ 
(1\ 2\ 3\ 4\ 5\ 6\ 7\ 8\ 9)&1&10&15&10&15&15&17&15&17&17&15&17&17&17&17&17&18&17\\ 
(1\ 2\ 3\ 4\ 5\ 6\ 7\ 9\ 8)&1&10&15&10&15&15&17&15&17&17&15&17&17&17&17&17&17&18\\
\hline
\end{array}
$$
\caption{Values of $\eta$ for $A_9$}
\label{table:eta_for_A9}
\end{table}


\begin{table}[htpb]
$$
\begin{array}{|@{\;}l@{\;}|@{\;}c@{\;}c@{\;}c@{\;}c@{\;}c@{\;}c@{\;}|}
\hline 
\;\;\;\;\;\;\;\;\raisebox{0.9cm}{\parbox{1.6cm}{$\eta'(\alpha, \beta)$}}
&\begin{sideways}()\end{sideways}&\begin{sideways}(1\ 2)(3\ 4)\end{sideways}&\begin{sideways}(1\ 2\ 3)\end{sideways}&\begin{sideways}(1\ 2\ 3)(4\ 5\ 6)\end{sideways}&\begin{sideways}(1\ 2\ 3\ 4)(5\ 6)\end{sideways}&\begin{sideways}(1\ 2\ 3\ 4\ 5)\end{sideways}\\ 
\hline
()&1&1&1&1&1&1\\ 
(1\ 2)(3\ 4)&1&6&4&4&5&5\\ 
(1\ 2\ 3)&1&4&4&4&4&5\\ 
(1\ 2\ 3)(4\ 5\ 6)&1&4&4&5&4&5\\ 
(1\ 2\ 3\ 4)(5\ 6)&1&5&4&4&6&5\\ 
(1\ 2\ 3\ 4\ 5)&1&5&5&5&5&6\\
\hline
\end{array}
$$
\caption{Values of $\eta'$ for even permutations of $S_6$}
\label{table:eta'_for_evenS6}
\end{table}

\begin{table}[htpb]
$$
\begin{array}{|@{\;}l@{\;}|@{\;}c@{\;}c@{\;}c@{\;}c@{\;}c@{\;}c@{\;}c@{\;}c@{\;}|} 
\hline
\;\;\;\;\;\;\;\;\raisebox{1.0cm}{\parbox{1.6cm}{$\eta'(\alpha, \beta)$}}
&\begin{sideways}()\end{sideways}&\begin{sideways}(1\ 2)(3\ 4)\end{sideways}&\begin{sideways}(1\ 2\ 3)\end{sideways}&\begin{sideways}(1\ 2\ 3)(4\ 5)(6\ 7)\end{sideways}&\begin{sideways}(1\ 2\ 3)(4\ 5\ 6)\end{sideways}&\begin{sideways}(1\ 2\ 3\ 4)(5\ 6)\end{sideways}&\begin{sideways}(1\ 2\ 3\ 4\ 5)\end{sideways}&\begin{sideways}(1\ 2\ 3\ 4\ 5\ 6\ 7)\end{sideways}\\ 
\hline
()&1&1&1&1&1&1&1&1\\ 
(1\ 2)(3\ 4)&1&7&4&5&5&7&7&5\\ 
(1\ 2\ 3)&1&4&4&4&6&6&6&5\\ 
(1\ 2\ 3)(4\ 5)(6\ 7)&1&5&4&8&6&7&6&7\\ 
(1\ 2\ 3)(4\ 5\ 6)&1&5&6&6&8&7&7&7\\ 
(1\ 2\ 3\ 4)(5\ 6)&1&7&6&7&7&8&7&7\\ 
(1\ 2\ 3\ 4\ 5)&1&7&6&6&7&7&8&7\\ 
(1\ 2\ 3\ 4\ 5\ 6\ 7)&1&5&5&7&7&7&7&8\\
\hline
\end{array}
$$
\caption{Values of $\eta'$ for even permutations of $S_7$}
\label{table:eta'_for_evenS7}
\end{table}

\begin{table}[htpb]
$$
\begin{array}{|@{\;}l@{\;}|@{\;}c@{\;}c@{\;}c@{\;}c@{\;}c@{\;}c@{\;}c@{\;}c@{\;}c@{\;}c@{\;}c@{\;}c@{\;}|} 
\hline
\;\;\;\;\;\;\;\;\raisebox{1.1cm}{\parbox{1.6cm}{$\eta'(\alpha, \beta)$}}
&\begin{sideways}()\end{sideways}&\begin{sideways}(1\ 2)(3\ 4)\end{sideways}&\begin{sideways}(1\ 2)(3\ 4)(5\ 6)(7\ 8)\end{sideways}&\begin{sideways}(1\ 2\ 3)\end{sideways}&\begin{sideways}(1\ 2\ 3)(4\ 5)(6\ 7)\end{sideways}&\begin{sideways}(1\ 2\ 3)(4\ 5\ 6)\end{sideways}&\begin{sideways}(1\ 2\ 3\ 4)(5\ 6)\end{sideways}&\begin{sideways}(1\ 2\ 3\ 4)(5\ 6\ 7\ 8)\end{sideways}&\begin{sideways}(1\ 2\ 3\ 4\ 5)\end{sideways}&\begin{sideways}(1\ 2\ 3\ 4\ 5)(6\ 7\ 8)\end{sideways}&\begin{sideways}(1\ 2\ 3\ 4\ 5\ 6)(7\ 8)\end{sideways}&\begin{sideways}(1\ 2\ 3\ 4\ 5\ 6\ 7)\end{sideways}\\ 
\hline
()&1&1&1&1&1&1&1&1&1&1&1&1\\ 
(1\ 2)(3\ 4)&1&7&5&4&8&9&10&8&8&7&9&8\\ 
(1\ 2)(3\ 4)(5\ 6)(7\ 8)&1&5&5&2&7&6&9&7&5&6&10&8\\ 
(1\ 2\ 3)&1&4&2&4&6&7&7&5&6&6&7&8\\ 
(1\ 2\ 3)(4\ 5)(6\ 7)&1&8&7&6&12&10&11&9&10&10&11&11\\ 
(1\ 2\ 3)(4\ 5\ 6)&1&9&6&7&10&12&11&10&10&10&10&11\\ 
(1\ 2\ 3\ 4)(5\ 6)&1&10&9&7&11&11&12&11&11&10&11&11\\ 
(1\ 2\ 3\ 4)(5\ 6\ 7\ 8)&1&8&7&5&9&10&11&12&8&10&11&11\\ 
(1\ 2\ 3\ 4\ 5)&1&8&5&6&10&10&11&8&11&10&10&11\\ 
(1\ 2\ 3\ 4\ 5)(6\ 7\ 8)&1&7&6&6&10&10&10&10&10&12&11&11\\ 
(1\ 2\ 3\ 4\ 5\ 6)(7\ 8)&1&9&10&7&11&10&11&11&10&11&12&11\\ 
(1\ 2\ 3\ 4\ 5\ 6\ 7)&1&8&8&8&11&11&11&11&11&11&11&12\\
\hline
\end{array}
$$
\caption{Values of $\eta'$ for even permutations of $S_8$}
\label{table:eta'_for_evenS8}
\end{table}

\begin{table}[htpb]
$$
\begin{array}{|@{\;}l@{\;}|@{\;}c@{\;}c@{\;}c@{\;}c@{\;}c@{\;}c@{\;}c@{\;}c@{\;}c@{\;}c@{\;}c@{\;}c@{\;}c@{\;}c@{\;}c@{\;}c@{\;}|}
\hline 
\;\;\;\;\;\;\;\;\raisebox{1.2cm}{\parbox{1.6cm}{$\eta'(\alpha, \beta)$}}
&\begin{sideways}()\end{sideways}&\begin{sideways}(1\ 2)(3\ 4)\end{sideways}&\begin{sideways}(1\ 2)(3\ 4)(5\ 6)(7\ 8)\end{sideways}&\begin{sideways}(1\ 2\ 3)\end{sideways}&\begin{sideways}(1\ 2\ 3)(4\ 5)(6\ 7)\end{sideways}&\begin{sideways}(1\ 2\ 3)(4\ 5\ 6)\end{sideways}&\begin{sideways}(1\ 2\ 3)(4\ 5\ 6)(7\ 8\ 9)\end{sideways}&\begin{sideways}(1\ 2\ 3\ 4)(5\ 6)\end{sideways}&\begin{sideways}(1\ 2\ 3\ 4)(5\ 6\ 7)(8\ 9)\end{sideways}&\begin{sideways}(1\ 2\ 3\ 4)(5\ 6\ 7\ 8)\end{sideways}&\begin{sideways}(1\ 2\ 3\ 4\ 5)\end{sideways}&\begin{sideways}(1\ 2\ 3\ 4\ 5)(6\ 7)(8\ 9)\end{sideways}&\begin{sideways}(1\ 2\ 3\ 4\ 5)(6\ 7\ 8)\end{sideways}&\begin{sideways}(1\ 2\ 3\ 4\ 5\ 6)(7\ 8)\end{sideways}&\begin{sideways}(1\ 2\ 3\ 4\ 5\ 6\ 7)\end{sideways}&\begin{sideways}(1\ 2\ 3\ 4\ 5\ 6\ 7\ 8\ 9)\end{sideways}\\ 
\hline
()&1&1&1&1&1&1&1&1&1&1&1&1&1&1&1&1\\ 
(1\ 2)(3\ 4)&1&7&8&4&11&10&6&11&10&11&8&9&10&13&11&8\\ 
(1\ 2)(3\ 4)(5\ 6)(7\ 8)&1&8&12&4&14&11&7&13&13&12&9&14&12&14&13&13\\ 
(1\ 2\ 3)&1&4&4&4&7&7&5&7&7&7&6&6&9&10&9&8\\ 
(1\ 2\ 3)(4\ 5)(6\ 7)&1&11&14&7&16&14&13&15&14&14&13&15&14&15&15&13\\ 
(1\ 2\ 3)(4\ 5\ 6)&1&10&11&7&14&16&13&15&14&14&13&12&15&14&15&13\\ 
(1\ 2\ 3)(4\ 5\ 6)(7\ 8\ 9)&1&6&7&5&13&13&13&11&14&12&10&12&15&14&13&15\\ 
(1\ 2\ 3\ 4)(5\ 6)&1&11&13&7&15&15&11&16&15&15&14&14&14&15&15&13\\ 
(1\ 2\ 3\ 4)(5\ 6\ 7)(8\ 9)&1&10&13&7&14&14&14&15&16&15&12&15&15&15&14&15\\ 
(1\ 2\ 3\ 4)(5\ 6\ 7\ 8)&1&11&12&7&14&14&12&15&15&16&12&14&15&15&15&15\\ 
(1\ 2\ 3\ 4\ 5)&1&8&9&6&13&13&10&14&12&12&12&12&14&14&15&13\\ 
(1\ 2\ 3\ 4\ 5)(6\ 7)(8\ 9)&1&9&14&6&15&12&12&14&15&14&12&16&14&15&14&15\\ 
(1\ 2\ 3\ 4\ 5)(6\ 7\ 8)&1&10&12&9&14&15&15&14&15&15&14&14&16&15&15&15\\ 
(1\ 2\ 3\ 4\ 5\ 6)(7\ 8)&1&13&14&10&15&14&14&15&15&15&14&15&15&16&15&15\\ 
(1\ 2\ 3\ 4\ 5\ 6\ 7)&1&11&13&9&15&15&13&15&14&15&15&14&15&15&16&15\\ 
(1\ 2\ 3\ 4\ 5\ 6\ 7\ 8\ 9)&1&8&13&8&13&13&15&13&15&15&13&15&15&15&15&16\\
\hline
\end{array}
$$
\caption{Values of $\eta'$ for even permutations of $S_9$}
\label{table:eta'_for_evenS9}
\end{table}

\eject

\end{document}